\documentclass[12pt,a4paper]{article}
\usepackage{xcolor}

\usepackage{cancel}
\usepackage[normalem]{ulem}
\usepackage{subfigure,caption
}
\usepackage{mysec}
\usepackage{amssymb, amsmath, amsthm,graphicx, fancyhdr,bm,bbm}
\usepackage{hyperref}
\DeclareGraphicsExtensions{.pdf,.png,.jpg,.eps}

\usepackage[thinc]{esdiff}
\usepackage{amsfonts}
\usepackage{color}
\definecolor{dkgreen}{rgb}{0,0.6,0}
\definecolor{gray}{rgb}{0.5,0.5,0.5}
\definecolor{mauve}{rgb}{0.58,0,0.82}
\usepackage{listings} 
\lstset{ %
  language=R,                     
  basicstyle=\footnotesize,
  backgroundcolor=\color{white},
  showspaces=false,               
  showstringspaces=false,         
  showtabs=false,                 
  frame=single,                   
  rulecolor=\color{gray},
  tabsize=2,
  captionpos=b,                   
  breaklines=true,                
  breakatwhitespace=false,        
  title=\lstname,                 
  keywordstyle=\color{blue},      
  commentstyle=\color{dkgreen},   
  stringstyle=\color{mauve},      
  escapeinside={\%*}{*)},         
  morekeywords={*,...}            
}

\newtheorem{theorem}{Theorem}[section]
\newtheorem{lemma}[theorem]{Lemma}
\newtheorem{proposition}[theorem]{Proposition}

\theoremstyle{remark}
\newtheorem{remark}{Remark}[section]
\newtheorem{assumption}{Assumption}[section]

\usepackage{enumerate}

\setlength{\textwidth}{15.6truecm}
\setlength{\textheight}{22.8truecm} \setlength{\oddsidemargin}{2mm}
\setlength{\evensidemargin}{2mm} \setlength{\topmargin}{-5mm}

\numberwithin{equation}{section}

\newcommand{\NN}{\mathbb{N}}

\newcommand{\RR}{\mathbb{R}}
\newcommand{\CC}{\mathbb{C}}
\newcommand{\rme}{\mathrm{e}}
\newcommand{\rmd}{\mathrm{d}}
\newcommand{\rmi}{\mathrm{i}}
\newcommand{\PP}{{\mathsf P}}

\newcommand{\EE}{{\mathsf E}}
\newcommand{\Var}{{\mathsf{Var}}}
\newcommand{\Cov}{{\mathsf{Cov}}}

\usepackage{tcolorbox}


\newcommand{\myp}{\mbox{$\:\!$}}
\newcommand{\mypp}{\mbox{$\;\!$}}

\newcommand{\myn}{\mbox{$\;\!\!$}}
\newcommand{\mynn}{\mbox{$\:\!\!$}}

\begin{document}
\title{Limit Shape of the Generalized Inverse Gaussian-Poisson Distribution}
\author{Leonid V.\ Bogachev\thanks{\,\href{mailto:L.V.Bogachev@leeds.ac.uk}{L.V.Bogachev@leeds.ac.uk}, ORCID \href{https://orcid.org/0000-0002-2365-2621}{0000-0002-2365-2621}}, \,Ruheyan Nuermaimaiti\thanks{\,Corresponding author, \href{mailto:mmrn@leeds.ac.uk}{mmrn@leeds.ac.uk}, ORCID \href{https://orcid.org/0000-0002-5764-9949}{0000-0002-5764-9949}}\ \,and Jochen Voss\thanks{\,\href{mailto:J.Voss@leeds.ac.uk}{J.Voss@leeds.ac.uk}, ORCID \href{https://orcid.org/0000-0002-2323-3814}{0000-0002-2323-3814}}\\[.3pc]
\it School of Mathematics, University of Leeds, Leeds LS2 9JT, UK
}

\date{\vspace{-5ex}}

\maketitle

\begin{abstract}
The generalized inverse Gaussian-Poisson (GIGP) distribution proposed by Sichel in the 1970s has proved to be a flexible fitting tool for diverse frequency data, collectively described using the item production model. In this paper, we identify the limit shape (specified as an incomplete gamma function) of the properly scaled diagrammatic representations of random samples from the GIGP distribution (known as Young diagrams). We also show that fluctuations are asymptotically normal and, moreover, the corresponding empirical random process is approximated via a rescaled Brownian motion in inverted time, with the inhomogeneous time scale determined by the limit shape. Here, the limit is taken as the number of production sources is growing to infinity, coupled with an intrinsic parameter regime ensuring that the mean number of items per source is large. More precisely, for convergence to the limit shape to be valid, this combined growth should be fast enough. In the opposite regime referred to as ``chaotic'', the empirical random process is approximated by means of an inhomogeneous Poisson process in inverted time. These results are illustrated using both computer simulations and some classic data sets in informetrics.
\end{abstract}

\medskip\noindent
\emph{Keywords:} count data; frequency distributions; sources/items models; generalized inverse Gaussian-Poisson distribution; Young diagrams; limit shape; informetrics data

\medskip\noindent
\emph{MSC\,2020:} Primary 62E17; Secondary 62P25


\section{Introduction}

In many applied situations, one deals with \emph{count data} in the form of sample frequencies of occurrence in one of the countably many categories (``boxes''), say, labelled by index $j\in\NN=\{1,2,\dots\}$ or $j\in\NN_0=\{0,1,2,\dots\}$. It is often appropriate to interpret occurrence in each box $j$ as the corresponding number of batched ``items'' produced by one out of the plurality of contributing ``sources''; for example, falling into box $j=0$ is interpreted as no items produced by the source. The observed data set is then the vector $(M_j)$ of the recorded counts in each box out of the total number of sources $M=\sum_j M_j$, with the total number of items $N=\sum_j j\myp M_j$.

Diverse real-life examples of such a scenario include: abundance data of various species such as butterflies, with  different species treated as sources and the observed counts as items; the number of followers (items) of different accounts in Twitter (sources); repeat-buying data, with the number of units bought (items) by households (sources); the number of papers (items) produced by authors (sources); etc.\footnote{For more examples and further references, see, e.g., a monograph by Egghe \cite{Egghe} or a review paper by Clauset et al.\ \cite{CSN}.} In the latter example, papers may themselves play the role of sources, with citations as items. Altogether, this forms an interesting triangle of relationships, \emph{authors--papers--citations (APC)}, which is one of the main subjects of investigation in \emph{informetrics}  \cite{Egghe}.

A natural objective with such types of data is to explain the observed (relative) frequencies $(M_j/M)$ by fitting a suitable distributional model $(f_j)$, preferably possessing some conceptual foundation and applicable to a variety of use cases. Of course, in any real-life data set the numbers $M_j$ will reduce to zero for values $j$ big enough, but this is reconciled with the modelling prediction $M_j/M\approx f_j$ simply by the fact that the theoretical frequencies $f_j$ tend to zero as $j\to\infty$. However, adequate modelling of the long-tail frequencies is of importance in relation to understanding the behavior of extreme values in the count data (e.g., untypically high citations).

A celebrated example of a theoretical frequency model is the \emph{power law}, first proposed by Lotka \cite{Lotka} to describe the publication statistics in chemistry and physics, and based on the empirical observation that the sample  frequencies $M_j/M$
approximately follow a power law distribution of the form $f_j=c_\alpha\myp j^{-\alpha}$ ($j\in\NN$), with some exponent $\alpha>1$
and the normalization constant\footnote{The normalization constant $c_\alpha$ is expressed via the  Riemann zeta function,  $c_\alpha^{-1}=\sum_{j=1}^\infty j^{-\alpha}=\zeta(\alpha)$. In real-life examples, the power law exponent is typically in the range $2<\alpha<3$ \cite{CSN}.} $c_\alpha>0$ such that
$\sum_{j=1}^\infty f_j=1$ \cite{Coile,Egghe}.

An evident heuristic tool to fit a power law model to the count data is by looking at the frequency plots (e.g., histograms) with logarithmic scales on both axes, whereby one seeks a straight-line fit, with the slope $-\alpha$ \cite{Nicholls}. An alternative approach \cite{CSN}, which provides the helpful smoothing of the discrete data, is via the complementary cumulative frequencies
$\bar{F}_j=\sum_{\ell\ge j} f_\ell$, where, using again the log-log plots, a good fit corresponds to a straight line, with slope $1-\alpha$. More formally, the model can
be fitted using standard statistical methods such as the maximum likelihood or ordinary
least squares estimation \cite{Nicholls}.

The conventional explanation of universality of the power law is based on the principle of \emph{cumulative advantage}, also expressed as the catchphrase ``success breeds success'', originally coined in the context of scientific productivity \cite{Price1965,Price,ER} (see also a more recent review \cite{Huber} with a critique of  cumulative advantage).
Unfortunately, the utility of Lotka's power law for real data modelling is often limited by fitting to the data well only on a reduced range of count values, requiring a truncation of lower values (see an extensive discussion in \cite{CSN}) or of higher (long-tail) values better described by a \emph{stretched-exponential law} \cite{Sornette}.

Numerous other attempts to fit theoretical distributions to a variety of count data sets included using the negative binomial distribution, the modified geometric distribution, the beta binomial distribution, and many more \cite{Johnson1} (see the discussion and further references in \cite{Sichel1985,Huber}), however, none of these distributional families proved to be sufficiently ``universal'' in explaining diverse count data sets, often failing to capture some characteristic features such as modality and long-tail behavior.

In a series of papers, Sichel \cite{Sichel1971,Sichel1973a, Sichel1974,Sichel1975, Sichel1982,Sichel1985} introduced and developed the so-called \emph{generalized inverse Gaussian-Poisson (GIGP)} model, proposed in an attempt to grasp
a plausible production of items by respecting statistical differences in the individual productivity
of sources (e.g., papers and authors, respectively). More precisely, a source is assumed to produce items according to a Poisson law with rate $\lambda$,
which is itself random with a specific choice of the \emph{generalized inverse Gaussian (GIG)}
distribution density \cite{Sichel1971,Johnson}. In other words, GIGP distribution is a mixed Poisson distribution under the GIG mixing density (see \cite{Gupta} for a survey of Poisson mixture models for long-tailed count data). Sichel applied his GIGP model to a great variety of use cases and multiple data sets, from sentence-lengths and word frequencies in written prose \cite{Sichel1974,Sichel1975} to number of stones found in diamondiferous deposits \cite{Sichel1973a} and scientific production (papers and/or citations) \cite{Sichel1985}. These examples have  demonstrated a remarkable flexibility and versatility of the GIGP distribution family.

In a more recent development, Yong \cite{Yong} proposed to use combinatorial models of random integer partitions to mimic citation count data, where the constituent parts of the integer partition represent the author's papers with the corresponding numbers of citations, respectively. The main perceived advantage of this approach was to leverage the knowledge of so-called \emph{limit shape} for suitably scaled \emph{Young diagrams} visualizing parts in the (random) integer partition, which would then enable one to estimate statistically some citation metrics such as the \emph{$h$-index}\footnote{The $h$-index is defined as the maximum number $h$ of the  author's papers, each one cited at least $h$ times.} introduced by Hirsch \cite{Hirsch}. Specifically, noting that the $h$-index corresponds geometrically to the location  with equal coordinates at the upper boundary of the Young diagram, and using an explicit equation for the limit shape under
the scaling $\sqrt{N}$ along both axes, where $N\gg1$ is the total number of citations \cite{vershik1996statistical, Pittel}, Yong came up with a simple estimate of the $h$-index, $h\approx 0.54\,\sqrt{N}$, which he then tested using several data sets of mathematical citations~\cite{Yong}.

In this paper, we apply the notion of limit shape to random samples from the GIGP distribution, taking as a large parameter its expected value $\eta$ (together with the number of sources $M$). Under a suitable normalization, we  obtain an explicit formula for the limit shape, given by the incomplete (upper) gamma function, $\varphi_\nu(x)=\int_x^\infty\mynn
s^{\nu-1}\,\rme^{-s}\,\rmd{s}$ ($x>0$), indexed by the shape parameter $\nu\ge-1$ of the GIGP distribution. In terms of empirical data analysis, this amounts to the corresponding scaling of the complementary cumulative frequency plots, which facilitates a quick visual check of the goodness-of-fit of the GIGP model, with an additional insight informed by the recommended scaling. A more careful analysis of the error bounds, based on
asymptotic confidence intervals, is made possible by virtue of our result on asymptotically Gaussian fluctuations around the limit shape $\varphi_\nu(x)$.

As follows from the predicted limit shape $\varphi_\nu(x)$, the upper tail of the GIGP model has a power-modulated exponential decay, thus strongly deviating from the power law behavior. In most practical examples, this discrepancy is not essential because of scarcity (or lack) of higher counts; however, the question of adequate modelling of extreme values is interesting, with the stretched exponential model mentioned above being an attractive alternative \cite{Sornette}.

Note that the insightful property of having a limit shape with a non-trivial scaling is not automatic for count data models; for instance, a notable exception is the power law distribution $f_j=c\mypp j^{-\alpha}$ because of being scale free: indeed, for any scaling factor $A>0$, we have $A^{\alpha}f_{\myn Ax}\equiv f_x$ ($x>0$).

In this regard, let us mention the \emph{generalized power law (GPL)} model recently introduced and studied by Nuermaimaiti et al.\ \cite{NBV}, which aims to bridge small values of counts (frequently truncated under the power law fit) and a power type upper tail. The conceptual justification of the GPL model is also based on the mixing idea as in Sichel \cite{Sichel1974}, but under the different choices  of the source production law (geometric instead of Poisson) and the mixing density (a beta distribution instead of the GIG one). The limit shape in the GPL model exists and is given by $\varphi(x;\alpha)=(1+x)^{1-\alpha}$ ($x\ge0$), where $\alpha>0$ is the shape parameter of GPL. Although the long tail of the GPL limit shape has the same power decay as in the power law case above, the key difference is that GPL is not scale free, which ensures that the corresponding scaling is non-trivial.

The rest of the paper is organized as follows. In Section \ref{sec:2.1}, we define our main model of item production and set out basic notation and some elementary relations. The notions of Young diagrams and limit shape are introduced in Section \ref{sec:2.2}, illustrated in Section \ref{sec:2.3} via the classic model of integer partitions. In Section \ref{sec:3.1}, the GIGP distribution  is defined with parameters $\nu\in\RR$, $\alpha>0$ and $0<\theta<1$, augmented in Section \ref{sec:3.2} by specification in the boundary case $\alpha=0$, and followed in Section \ref{sec:3.3} by the asymptotic analysis of the GIGP expected value in the desired limit $\theta\to1-$ (Proposition \ref{pr:eta}). In particular, this analysis explains why we impose a restriction $\nu\ge-1$.

After these preparations, the limit shape problem is addressed in Section \ref{sec:4}. First, in Section \ref{sec:4.1} the suitable scaling coefficients $A$ and $B$ are defined (Assumption \ref{as:AB}), and our main result is stated as Theorem \ref{th:main} about uniform convergence in probability (for $x\ge \delta$, with any $\delta>0$) of rescaled Young diagrams to the limit shape $\varphi_\nu(x)=\int_x^\infty \!s^{\nu-1}\mypp \rme^{-s}\,\rmd{s}$ ($x>0$), illustrated using computer simulations for two example cases, $\nu=0.5$ and $\nu=-0.5$. Importantly, for this result to be valid, the $y$-scaling coefficient $B$ must grow unboundedly (Assumption \ref{as:B}), which in turn implies that the number of sources $M$ in the item production model should be large enough. Convergence to $\varphi_\nu(x)$ is first shown in Section \ref{sec:4.2} for the expectation of Young diagrams (Theorem \ref{th:conv_exp}). Pointwise convergence in probability is then established in Section \ref{sec:4.3} (Theorem \ref{th:conv_Y}). In Section \ref{sec:4.4}, we construct a suitable martingale in inverted time $t=1/x$ (Lemma \ref{lm:W}), which enables us to prove Theorem \ref{th:main} by applying the Doob--Kolmogorov submartingale inequality \cite[Sec.\,\ref{sec:4.5}]{Yeh}. In Section \ref{sec:4.6}, an alternative proof of Theorem \ref{th:main} is given using the interpretation of the Young boundary as an empirical process and taking advantage of available concentration inequalities \cite{BLM}.

Fluctuations of random Young diagrams are studied in Section \ref{sec:5}, where we establish the pointwise asymptotic normality (Theorem \ref{th:CLT1}, unified by characterizing the limit as a Gaussian process in Theorems \ref{th:CLT2} and \ref{th:CLT3}. The case where convergence to the limit shape fails due to the number of sources $M$ not growing fast enough (referred to as a ``chaotic'' regime) is treated in Section \ref{sec:6}, where we demonstrate that a Poisson approximation is a suitable replacement of a deterministic limit (Theorems \ref{th:Poisson}, \ref{th:Poisson1} and \ref{th:Poisson2}), illustrated by a computer simulation. In Section \ref{sec:7}, the limit shape results are applied to some of the real-life frequency data sets considered earlier by Sichel \cite{Sichel1985} as a test bed for the proposed GIGP distribution goodness-of-fit. Here, our purpose it to demonstrate that the predicted limit shape, together with the adequate $(A,B)$-scaling  (based on estimated or predefined values of the GIGP parameters) provides a useful visualization tool for a quick and informative checkout of suitability of the GIGP model. The paper concludes with a summary of the main findings in Section \ref{sec:8}. Lastly, Appendix \ref{sec:A} comprises a brief compendium of asymptotic formulas for the Bessel function $K_\nu(z)$ involved in the definition of the GIGP distribution. These formulas (for small argument $z$ or for large order $\nu$) are being extensively used in our asymptotic analysis.

\section{Setting the scene}\label{sec:2}
\subsection{Items production model}\label{sec:2.1}

Suppose there are $M$ sources, each one producing a batch of items, and let $X_i$ denote the random size of the batch produced by the $i$-th source ($i=1,\dots,M$). The range of the output size can be $j\in\NN_0$ if empty output is allowed (e.g., citations of a paper), or it can be zero truncated, with $j\in\NN$ (e.g., papers of an author). The sources are independent of one another and their random outputs follow a common frequency distribution $(f_j)$, that is, the random variables $(X_i)$ are mutually independent and, for each $i=1,\dots,M$,
\begin{equation*}
\PP(X_i=j)=f_j\qquad (j\in\NN_0).
\end{equation*}
\begin{remark}
To streamline the notation, we keep writing $j\in\NN_0$, wherein the zero-truncated case is included with $f_0=0$.
\end{remark}
\begin{remark}\label{rm:occ}
The \emph{item production  model} introduced above can be rephrased as the classic \emph{occupancy problem}, dealing with independent allocation of $M$ particles over infinitely many boxes with probability distribution $(f_j)$  \cite{Gnedin}.
\end{remark}
We assume that the distribution $(f_j)$ has finite mean,
\begin{equation}\label{eq:eta0}
\eta:=\EE(X_i)=\sum_{j=0}^\infty j f_j<\infty.
\end{equation}
The total (random) number of produced items is given by the sum of the outputs,
\begin{equation}\label{eq:N}
N=\sum_{i=1}^M X_i,
\end{equation}
with the expected value
\begin{equation}\label{eq:EN}
\EE(N)=\sum_{i=1}^M \EE(X_i)=M\eta.
\end{equation}

It is useful to represent each $X_i$ via ``scanning'' across the range of possible values $j$,
\begin{equation}\label{eq:sum-chi}
X_i=\sum_{j=0}^\infty j\myp I_{\{X_i=j\}},
\end{equation}
where $I_{A}$ denotes the indicator of event $A$ (i.e., with values $1$ if $A$ occurs and $0$ otherwise). Of course,
\begin{equation}
\label{eq:EI}
\EE\bigl(I_{\{X_i=j\}}\bigr)=\PP(X_i=j)=f_j\qquad (j\in\NN_0).
\end{equation}
Consider the
multiplicity $M_j$ of output size $j\in\NN_0$ in the pooled production of items $(X_i)$,
\begin{equation}\label{eq:Mj}
M_j:=\#\bigl\{i\in\{1,\dots,M\}\colon X_i=j\bigr\}=\sum_{i=1}^M I_{\{X_i=j\}}\qquad (j\in\NN_0).
\end{equation}
Using \eqref{eq:EI}, we find the expectation
\begin{equation}\label{eq:=M(j)}
\EE(M_j)=\sum_{i=1}^M \EE\bigl(I_{\{X_i=j\}}\bigr)=Mf_j\qquad (j\in\NN_0).
\end{equation}
Note that the random variables $(M_j)$ are not independent; indeed, they sum up to the number of sources,
$$
\sum_{j=0}^\infty M_j=\sum_{j=0}^\infty \sum_{i=1}^M I_{\{X_i=j\}}=\sum_{i=1}^M \sum_{j=0}^\infty I_{\{X_i=j\}}=\sum_{i=1}^M 1=M.
$$

From the interpretation of the multiplicities $M_j$, it is evident that the total (random) number of produced items is given by
\begin{equation}\label{eq:N1}
N=\sum_{j=0}^\infty j\myp M_j.
\end{equation}
The same can be easily obtained using definition \eqref{eq:N} and decompositions \eqref{eq:sum-chi} and \eqref{eq:Mj},
$$
N=\sum_{i=1}^M X_i=
\sum_{i=1}^M \sum_{j=0}^\infty j\myp I_{\{X_i=j\}}=
\sum_{j=0}^\infty
j\sum_{i=1}^M I_{\{X_i=j\}} = \sum_{j=0}^\infty j\myp M_j.
$$
The expected value of $N$ can then be  expressed using
\eqref{eq:=M(j)} and \eqref{eq:eta0},
\begin{equation}\label{eq:EN1}
\EE(N)=\sum_{j=0}^\infty j\mypp\EE(M_j)=M\sum_{j=0}^\infty jf_j=M\eta,
\end{equation}
which is, of course, the same as \eqref{eq:EN}.

\begin{remark}\label{rm:eta-hat}
In view of formulas \eqref{eq:EN} and \eqref{eq:EN1}, the sample mean $\hat{\eta}=N/M$ is an unbiased estimator of the expected value $\eta$, possessing  all standard properties such as consistency and asymptotic normality. The advantage of this estimator is that it is \emph{non-parametric}, in the sense that it does not require knowledge of any distributional model $(f_j)$ behind the production output data.
\end{remark}

\subsection{Young diagrams and limit shape}\label{sec:2.2}
It is useful to rank the sources according to their production output, that is, by considering the (descending) order statistics $X_{1,M}\ge X_{2,M}\ge \dots\ge X_{M,M}$; for example, $X_{1,M}=\max_{1\le i\le M}\{X_i\}$ is the highest output score amongst $M$ sources. The production profile is succinctly visualized by the \emph{Young diagram} formed by the left- and bottom-aligned row blocks of unit height and lengths $X_{1,M}\ge X_{2,M}\ge \cdots$, respectively, with longer blocks positioned lower (see Fig.\mypp\ref{fig1}\myp(a)). In particular, blocks corresponding to the output value $j=0$ (if it is allowed) degenerate to vertical intervals (of height $1$ each) placed on top of the rest of the Young diagram along the vertical axis.

\begin{figure}[ht]
\centering
\mbox{}\hspace{.0pc}
\subfigure[]%
{\includegraphics[width=.407\textwidth]{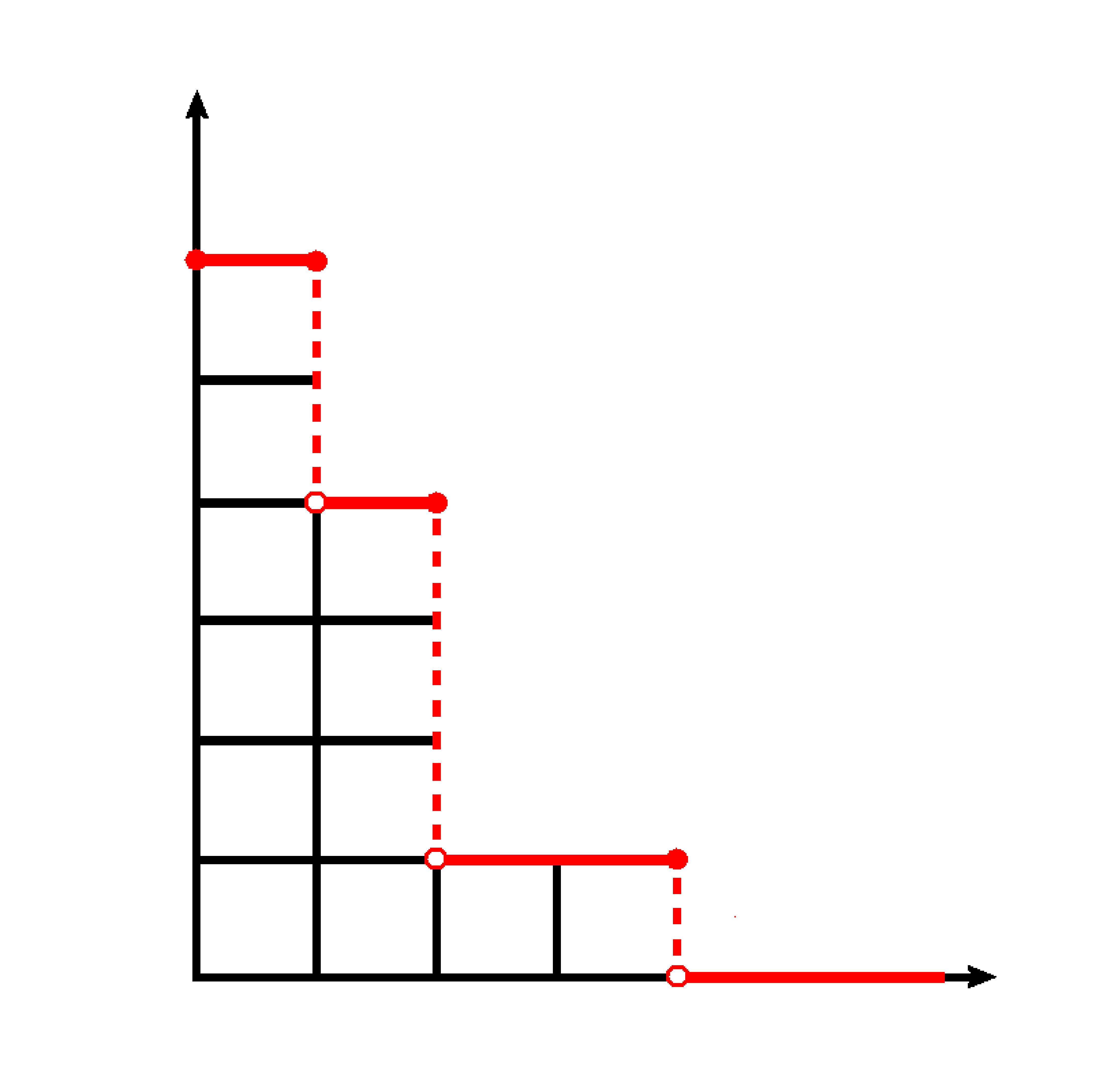}
\put(-25,8){\mbox{\footnotesize$x$}}
\put(-103,111){\mbox{\footnotesize$Y(x)$}}
\put(-104,112){\line(-1,-1){16}}
\put(-160,161){\mbox{\footnotesize$y$}}
\put(-152,8.3){\mbox{\scriptsize$\mathsf 0$}}
\put(-131.3,8){\mbox{\scriptsize$\mathsf 1$}}
\put(-112,8){\mbox{\scriptsize$\mathsf 2$}}
\put(-92,8){\mbox{\scriptsize$\mathsf 3$}}
\put(-72.6,8){\mbox{\scriptsize$\mathsf 4$}}
\put(-158,35){\mbox{\scriptsize$\mathsf 1$}}
\put(-158,55){\mbox{\scriptsize$\mathsf 2$}}
\put(-158,74){\mbox{\scriptsize$\mathsf 3$}}
\put(-158,93){\mbox{\scriptsize$\mathsf 4$}}
\put(-158,113){\mbox{\scriptsize$\mathsf 5$}}
\put(-158,133){\mbox{\scriptsize$\mathsf 6$}}
}\mbox{}\hspace{1pc}\subfigure[]{\includegraphics[width=.37\textwidth]{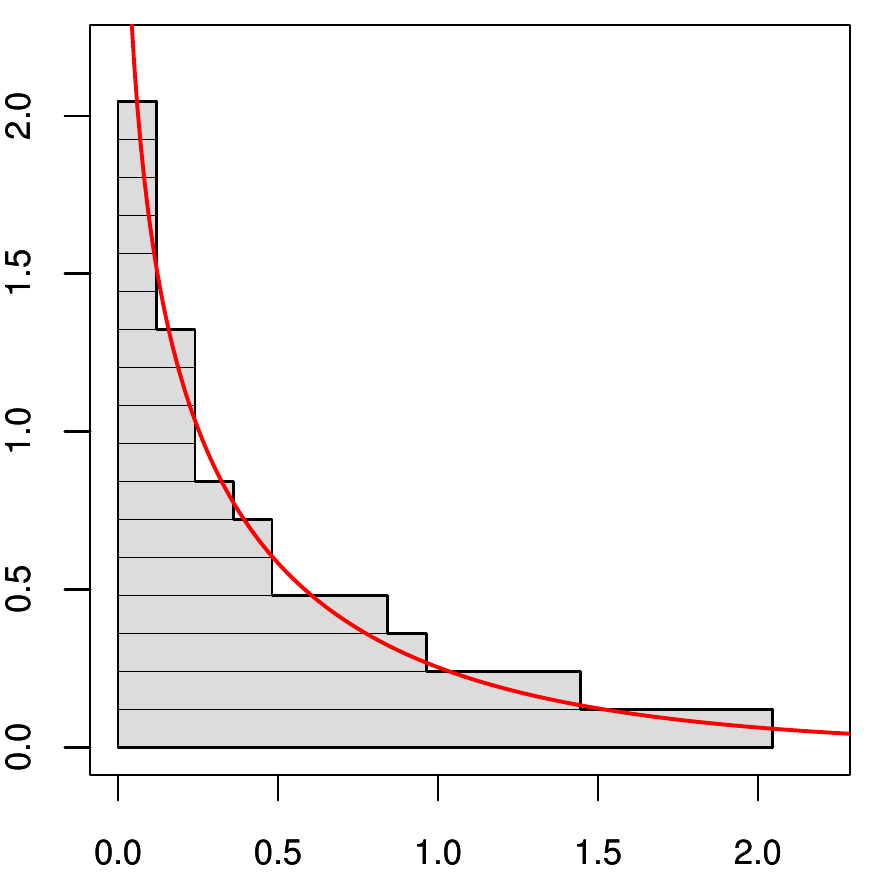}
\put(-71,60){\mbox{\footnotesize$\widetilde{Y}(x)$}}
\put(-72,61.5){\line(-1,-1){16}}
\put(-93,109){\mbox{\footnotesize$\varphi(x)$}}
\put(-94.2,107.8){\line(-1,-1){30}}
}
\caption{(a) Young diagram and the boundary $Y(x)$ for $M=6$ sources and ordered outputs $(X_{i,M})=(4,2,2,2,1,1)$, corresponding to counts $M_4=1$, $M_2=3$, $M_1=2$. (b) The limit shape $y=\varphi(x)$ for a randomized occupancy problem
(i.e., with independent counts $M_j$, see details in Section \ref{sec:2.3}), given by the equation $\rme^{-x\myp\pi/\sqrt{6}}+\rme^{-y\myp\pi/\sqrt{6}}=1$ (see \eqref{eq:LS02}). The shaded area represents a simulated Young diagram under the scaling $\sqrt{n}$ along both axes, with $n=100$; horizontal blocks correspond to the ordered outputs $X_{i,M}$ (with the sample value $M=17$).
} \label{fig1}
\end{figure}

The upper boundary of the Young diagram is the graph of the (left-continuous) step function
\begin{equation}\label{eq:Y}
    Y(x): = \sum_{j\ge x} \sum_{i=1}^M I_{\{X_i=j\}}=\sum_{j\ge x} M_j \qquad  (x\ge0)
\end{equation}
(see \eqref{eq:Mj}). To highlight the dependence on $x$, rewrite definition \eqref{eq:Y} in the form
\begin{equation}\label{eq:Y1}
    Y(x) =
    \sum_{j=0}^\infty M_j\mypp \mathbf{1}_{[0,j]}(x) \qquad  (x\ge0),
\end{equation}
where $\mathbf{1}_D(x)$ is the indicator function of set $D$ (i.e., $\mathbf{1}_D(x)=1$ if $x\in D$ and $\mathbf{1}_D(x)=0$ otherwise).

If $M_0>0$ then the function $Y(x)$ has an isolated peak at $x=0$; otherwise, $Y(x)$ is right-continuous at zero. The value at the origin is the total number of sources,
\begin{equation*}
Y(0)=\sum _{j=0}^\infty M_j=M,
\end{equation*}
whereas the area under the graph of $Y(x)$ equals the total number of produced items,
$$
\int_0^\infty \!Y(x)\,\rmd{x}=\sum_{j=0}^\infty M_j \int_0^\infty\!\mathbf{1}_{[0,j]}(x)\,\rmd{x}= \sum_{j=0}^\infty j\myp M_j=N
$$
(see \eqref{eq:Y1} and \eqref{eq:N1}).

Setting
\begin{equation}\label{eq:Zi}
Z_i(x):=\sum_{j\ge x}I_{\{X_i=j\}}=I_{\{X_i\ge x\}}\qquad (i=1,\dots,M),
\end{equation}
formula \eqref{eq:Y} can be expressed in the form
\begin{equation}\label{eq:YZ}
    Y(x) = \sum_{i=1}^M \sum_{j\ge x} I_{\{X_i=j\}}=\sum_{i=1}^M Z_i(x).
\end{equation}
The indicators $Z_1(x),\dots,Z_M(x)$ are independent and identically distributed Bernoulli random variables; specifically,
\begin{align}
\label{eq:Z1}
\PP(Z_i(x)=1)&=\PP(X_i\ge x)=\sum_{j\ge x} f_j =: \bar{F}(x),\\
\PP(Z_i(x)=0)&=\PP(X_i<x)=\sum_{j<x} f_j = :F(x),
\label{eq:Z0}
\end{align}
where $\bar{F}(x)+F(x)=1$ for all $x\ge0$. Hence,
$$
\EE\bigl(Z_i(x)\bigr)=\bar{F}(x),\qquad \Var\bigl(Z_i(x)\bigr)=\bar{F}(x)\myp\bigl(1-\bar{F}(x)\bigr)=\bar{F}(x)\myp F(x),
$$
and, for any $0\le x\le x'$,
$$
\Cov\bigl(Z_i(x),Z_{i}(x')\bigr)=\bar{F}(x') - \bar{F}(x)\mypp \bar{F}(x')=\bar{F}(x')\myp F(x).
$$
It then follows easily from \eqref{eq:YZ} that, for each $x\ge0$,
\begin{equation}\label{eq:Y-exp_var}
\EE \bigl(Y(x)\bigr)=M\bar{F}(x),\qquad \Var\bigl(Y(x)\bigr)=M\bar{F}(x)\myp F(x).
\end{equation}
and, for $0\le x\le x'$,
\begin{align}
\notag
 \Cov\bigl(Y(x),Y(x')\bigr)&= \sum_{i,\myp i'=1}^M\Cov\bigl(Z_i(x),Z_{i'}(x')\bigr)\\
&=\sum_{i=1}^M\Cov\bigl(Z_i( x),Z_{i}(x')\bigr)=M\bar{F}(x')\myp F(x) ,
\label{eq:Cov}
\end{align}

A useful visual insight into the structure of the production distribution may be obtained by looking at scaled Young diagrams, with some scaling coefficients $A$ and $B$,
\begin{equation}\label{eq:Y-tilde}
\widetilde{Y}(x)=\frac{1}{B}\,Y(A\myp x)=\frac{1}{B}\sum_{j\ge Ax} M_j=\frac{1}{B}\sum_{i=1}^MZ_i(A\myp x)\qquad (x\ge0).
\end{equation}
The aim is to seek a \emph{limit shape} $x\mapsto \varphi(x)$ such that, with suitable $A,B\to \infty$,
\begin{equation}\label{eq:exp-phi}
\EE \bigl(\widetilde{Y}(x)\bigr)\to\varphi(x)\qquad (x>0),
\end{equation}
and, moreover, there is convergence in probability of  $\widetilde{Y}(x)$ to $\varphi(x)$, that is, for any $\varepsilon>0$,
\begin{equation}\label{eq:p-phi}
\PP\bigl(|\widetilde{Y}(x)-\varphi(x)|>\varepsilon\bigr)\to0\qquad (x>0).
\end{equation}

\begin{remark}
The reason for restricting the range of convergence in \eqref{eq:exp-phi} and \eqref{eq:p-phi} to $x>0$ is that, in some cases, it turns out that $\varphi(0)=\infty$ (e.g., see Fig.\mypp\ref{fig1}\myp(b)).
\end{remark}

The notion of limit shape is motivated by similar topics in the theory of random integer partitions \cite{V0,vershik1996statistical}. This classic example is recalled briefly in Section \ref{sec:2.3} by way of illustration, although the setting there is somewhat different from the item production model. In the present paper, we address this problem for the GIGP distribution introduced in Section \ref{sec:3}.

\subsection{Example: limit shape of integer partitions}\label{sec:2.3}
To illustrate the concept of limit shape, we start with a baseline example of the power law frequency distribution, $f_j= j^{-a}\mynn/\zeta(a)$ ($j\ge 1$), with $a>1$. Choose any $A\to\infty$ such that  $B:=M/A^{a-1}\to\infty$; that is, $1\ll A\ll M^{1/(a-1)}$. Then the scaled expected Young diagram boundary function specializes to
\begin{align}
\label{eq:pl-sum-R}
\EE\bigl(\widetilde{Y}(x)\bigr)=\frac{A^{a-1}}{M}\!\sum_{j\ge Ax} Mj^{-a}
\mynn/\zeta(a)&=\frac{1}{\zeta(a)}\mynn\sum_{j/A\ge x} \left(\frac{j}{A}\right)^{-a}\frac{1}{A}\\
&\to \frac{1}{\zeta(a)}\mynn\int_x^\infty\! s^{-a}\,\rmd{s}=\frac{x^{-(a-1)}}{(a-1)\mypp\zeta(a)},
\label{eq:pl-sum}
\end{align}
using that the sum in \eqref{eq:pl-sum-R} is the Riemann integral sum of the integral in \eqref{eq:pl-sum}.
Thus, the limit shape exists and is given by the right-hand side of \eqref{eq:pl-sum}, but as mentioned in the Introduction, this is of no practical use because the scaling  parameter $A\to\infty$ is arbitrary as long as $A=o(M^{1/(a-1)})$ (which confirms that the power law distribution is \emph{scale free}).

The classic example of a frequency model possessing a meaningful limit shape comes from the theory of random integer partitions. Here, the values $j=1,2,\dots$ are interpreted as candidate parts into an integer partition, and the corresponding multiplicity $M_j$ is the number of times the part $j$ is used, respectively. In particular, if $M_j=0$ then the value $j$ is not involved in the partition, and it is tacitly assumed that only finitely many of $M_j$'s are non-zero. The sum $N=\sum_{j=1}^\infty j\myp M_j$ yields the integer being partitioned into the sum of the parts $j$ with $M_j>0$.

The standard model set-up there is different from the item production model described in Section \ref{sec:2.1}. Namely, instead of the premise of $M$ independent sources, with multiplicities $(M_j)$ expressed by formula \eqref{eq:Mj}, the randomized partition model is defined by assuming that the multiplicities $(M_j)$ are independent random variables with geometric distribution, $M_j\sim \mathrm{Geom}\myp(1-z^j)$ ($j\ge1$), that is,
\begin{equation}\label{eq:geom}
\PP(M_j=m)=z^{j\myp m}\mypp(1-z^j)\qquad (m\ge 0),
\end{equation}
with the expected value given by
\begin{equation}\label{eq:exp-geom}
\EE(M_j)=\frac{z^{j}}{1-z^j}\qquad (j\ge 1).
\end{equation}
The parameter $z\in(0,1)$ is chosen specifically as
\begin{equation}\label{eq:zn}
z=\rme^{-\kappa/\sqrt{n}},\qquad \kappa:=\frac{\pi}{\sqrt{6}}=\sqrt{\zeta(2)},
\end{equation}
where $n$ is an external (large) parameter.

Note that, for any $z\in(0,1)$,
$$
\PP(M_j>0)=1-\PP(M_j=0)=1- (1-z^j)=z^j,
$$
and
$$
\sum_{j=1}^\infty \PP(M_j>0)=\sum_{j=1}^\infty z^j=\frac{z}{1-z}<\infty.
$$
Therefore, by the Borel--Cantelli lemma (see, e.g., \cite[Sec.\,II.10, p.\,255]{Shiryaev}), the number of nonzero terms in the sequence of random multiplicities $(M_j)$ is finite with probability $1$.

Due to the mutual independence of $M_j$ and the geometric marginal distributions \eqref{eq:geom}, the probability of a given sequence of multiplicities $M_j=m_j$ ($j\ge 1$) (with finitely many nonzero terms) is expressed as follows,
\begin{equation}\label{eq:Boltzmann}
\PP(M_j=m_j, \,j=1,2,\dots)=\prod_{j=1}^\infty
z^{j\myp m_j}(1-z^j)=\frac{z^{N}}{G(z)},
\end{equation}
where
$N=\sum_{j=1}^\infty j\mypp m_j$
and
$$
G(z)=\prod_{j=1}^\infty \frac{1}{1-z^j}\qquad (0<z<1).
$$
Formula \eqref{eq:Boltzmann} is an instance of the so-called \emph{Boltzmann distribution}, with roots in statistical physics \cite{Auluck,Vershik2} and many applications in probabilistic combinatorics \cite{ABT} and computing \cite{Duchon}.

Motivation for the choice of the Boltzmann distribution
\eqref{eq:Boltzmann} is due to the fact that its conditioning leads to the uniform distribution on the corresponding subspace. Specifically, denoting by $\varPi_n$ the set of all integer partitions of $n$, it is easy to see that
the conditional probability of any partition in $\varPi_n$ with specific multiplicities of parts $M_j=m_j$, conditioned on $N=\sum_{j=1}^\infty jM_j=n$, is given by
$$
\PP\bigl(M_j=m_j,\, j\ge1\,\big|\,N={\textstyle{\sum_{j}}}\mypp j\myp M_j=n\bigr)=\frac{z^n/G(z)}{(z^n/G(z)) \cdot \#\varPi_n}=\frac{1}{\#\varPi_n},
$$
which is the uniform distribution on $\varPi_n$. Furthermore, the choice of the parameter $z$ in the asymptotic form \eqref{eq:zn} is explained by the natural calibration condition
\begin{equation}\label{eq:exp=N}
\EE(N)=\EE\Bigl({\textstyle \sum_{j=1}^\infty}\mypp j\myp M_j\Bigr)\sim n\qquad (n\to\infty).
\end{equation}
Indeed, using the mean formula \eqref{eq:exp-geom} and seeking the parameter $z$ in the form $z=\rme^{-\alpha_n}$, with $\alpha_n\to0$, the asymptotic equation \eqref{eq:exp=N} is rewritten as
\begin{equation}\label{eq:exp-N-Riemann}
\EE(N)=\sum_{j=1}^\infty \frac{j\,\rme^{-\alpha_n j}}{1-\rme^{-\alpha_n j}}=\frac{1}{\alpha_n^2}\sum_{j=1}^\infty \frac{\alpha_n j\,\rme^{-\alpha_n j}}{1-\rme^{-\alpha_n j}}\,\alpha_n\sim n.
\end{equation}
Observing that the sum in \eqref{eq:exp-N-Riemann} is a Riemann integral sum, it follows that
\begin{align*}
\sum_{j=1}^\infty \frac{\alpha_n j\,\rme^{-\alpha_n j}}{1-\rme^{-\alpha_n j}}\, \alpha_n&\to\int_0^\infty\!\frac{s\,\rme^{-s}}{1-\rme^{-s}}\,\rmd{s}\\
&=\sum_{\ell=1}^\infty\int_0^\infty\! s\,\rme^{-\ell s}\,\rmd{s}
=\sum_{\ell=1}^\infty\frac{1}{\ell^2}=\zeta(2)=\frac{\pi^2}{6}=\kappa^2.
\end{align*}
Substituting this into equation \eqref{eq:exp-N-Riemann}, we obtain $\alpha_n\sim \kappa/\sqrt{n}$, in line with \eqref{eq:zn}.

The expected limit shape in the partition model can now be easily computed \cite{vershik1996statistical,bogachev2015unified}: setting $A=B=\sqrt{n}$, we have, for any $x>0$,
\begin{align}
\notag
\EE\bigl(\widetilde{Y}(x)\bigr)=\frac{1}{B}\sum_{j\ge Ax}\EE(M_j)&=\frac{1}{\sqrt{n}}\sum_{j\ge \sqrt{n}\,x}
\frac{\rme^{-\alpha_n j}}{1-\rme^{-\alpha_n j}}\\
\notag
&\to\frac{1}{\kappa}\int_{\kappa x}^\infty \!\frac{\rme^{- u}}{1-\rme^{u}}\,\rmd{u}=\frac{1}{\kappa}\sum_{\ell=1}^\infty\int_{\kappa x}^\infty\!  \rme^{-\ell u}\,\rmd{u}\\
&=\frac{1}{\kappa}\sum_{\ell=1}^\infty \frac{1}{\ell}\,\rme^{-\ell x}=-\frac{1}{\kappa}\log \myn(1-\rme^{-\kappa x}).
\label{eq:phi}
\end{align}
Thus, the limit shape $y=\varphi(x)$ is given by the equation
\begin{equation*}
y=-\kappa^{-1} \log \myn(1-\rme^{-\kappa x})\qquad (x>0),
\end{equation*}
or, in a more symmetric form,
\begin{equation}\label{eq:LS02}
\rme^{-\kappa x}+\rme^{-\kappa y}=1\qquad (x, y > 0),
\end{equation}
where $\kappa=\pi/\sqrt{6}$ (see \eqref{eq:zn}). The plot of this function is shown in Fig.\mypp\ref{fig1}\myp(b).

Note that $\varphi(0)=\infty$. According to the calculation in \eqref{eq:phi}, this implies that the expected value of $M$ grows faster than $\sqrt{n}$. More precisely, we have
\begin{equation}\label{eq:exp-M-Riemann}
\EE(M)=\sum_{j=1}^\infty \frac{\rme^{-\alpha_n j}}{1-\rme^{-\alpha_n j}}=\sum_{j=1}^m\frac{\rme^{-\alpha_n j}}{1-\rme^{-\alpha_n j}}+\frac{1}{\alpha_n}\sum_{j>m}^\infty \frac{\alpha_n\mypp\rme^{-\alpha_n j}}{1-\rme^{-\alpha_n j}},
\end{equation}
where $m=[1/\alpha_n]$ and $\alpha_n=-\kappa/\sqrt{n}$ (see \eqref{eq:zn}). Arguing as before, we see that the last sum in \eqref{eq:exp-M-Riemann} converges to the integral $\int_1^\infty \rme^{-\kappa u}\mypp(1-\rme^{-\kappa u})^{-1}\mypp\rmd{u}<\infty$. Next, write
$$
\sum_{j=1}^m\frac{\rme^{-\alpha_n j}}{1-\rme^{-\alpha_n j}}=\frac{1}{\alpha_n}\sum_{j=1}^m\frac{1}{j}+\sum_{j=1}^m\left(\frac{\rme^{-\alpha_n j}}{1-\rme^{-\alpha_n j}}-\frac{1}{\alpha_n j}\right)\!,
$$
where \cite[2.10.8]{NIST}
$$
\sum_{j=1}^m\frac{1}{j}\sim \log m\sim -\log\alpha_n
$$
and
$$
\sum_{j=1}^m\left(\frac{\rme^{-\alpha_n j}}{1-\rme^{-\alpha_n j}}-\frac{1}{\alpha_n j}\right)\sim\frac{1}{\alpha_n}\int_0^1 \!\left(\frac{\rme^{-u}}{1-\rme^{-u}}-\frac{1}{u}\right)\myn\rmd{u}=O(\alpha_n^{-1}),
$$
noting that the integrand function has a finite limit at zero.
As a result,
\begin{equation}\label{eq:M>>}
\EE(M)\sim \alpha_n^{-1} (-\log\alpha_n)\sim   \frac{\sqrt{n}}{2\myp\kappa}\myp \log n=\frac{\sqrt{6\myp n}}{2\pi}\myp\log n\qquad (n\to\infty).
\end{equation}

\begin{remark}
Two different model settings discussed above\,---\,with independent outputs $X_i$ ($i=1,\dots, M$), like in the item production model (Section \ref{sec:2.2}), or with independent miltiplicities $M_j$ ($j\in\NN_0$), like in a randomized model of integer partitions (Section \ref{sec:2.3}), are in fact closely connected  and, in a sense, equivalent to one another. Indeed, randomization of certain parameters in combinatorial structures is a frequently used technical tool \cite{ABT} aiming to overcome structural constraints, such as a prescribed sum of parts in integer partitions \cite{Fristedt,bogachev2015unified}. As another example directly related to the item production model, in the occupancy problem (see Remark \ref{rm:occ}) it is conventional to use the so-called \emph{poissonization} \cite{ABT, Borisov} by replacing the original (co-dependent) multiplicities $M_j$ by independent Poisson random variables with mean $Mf_j$, respectively ($j\in\NN_0$) \cite{BGY,Gnedin}. In each of these settings, the anticipated equivalence is guaranteed via a suitable ``bridge'' between the original and randomized versions of the problem, such as a local limit theorem for the asymptotics of probabilities $\PP\bigl(\sum_j jM_j=n\bigr)$ in the case of integer partitions \cite{Fristedt,bogachev2015unified}, or a ``depoissonization lemma'' in the occupancy problem \cite{Gnedin, BGY}.
\end{remark}

\section{The GIGP model}\label{sec:3}
\subsection{The GIGP distribution}\label{sec:3.1}
The \emph{generalized inverse Gaussian-Poisson (GIGP)} distribution
introduced by Sichel \cite{Sichel1971, Sichel1985} is of the form
\begin{equation}\label{eq:GIGP}
    f_j=\frac{\left(1-\theta\right)^{\nu/2}}{K_\nu\bigl(\alpha\myn
    \left(1-\theta\right)^{1/2}\bigr)} \cdot\frac{\left(\frac12\myp\alpha\mypp \theta\right)^j}{j!}\mypp K_{\nu+j}(\alpha)\qquad (j\in\NN_0),
\end{equation}
where parameters have the range $\nu\in\RR$, $\alpha>0$ and $0<\theta<1$, and $K_\nu(\cdot)$ is the \emph{modified Bessel function of the second
kind} of order $\nu$ \cite[\S\myp10.25(i), \S\myp10.25(ii)]{NIST}.

As was mentioned in the Introduction, the GIGP model \eqref{eq:GIGP}  is a mixed Poisson distribution,
\begin{equation}\label{eq:GIPG1}
f_j=\int_0^\infty \frac{\lambda^j\myp\rme^{-\lambda}}{j!}\,g(\lambda)\,\rmd{\lambda}\qquad (j\ge 0),
\end{equation}
 with the mixing density for the Poisson parameter $\lambda$ chosen as a \emph{generalized inverse Gaussian (GIG)}
density \cite{Sichel1971} (see also \cite[p.\:284]{Johnson})\footnote{We follow the nomenclature of  \cite{Sichel1971}. The connection with an alternative parameterization $(\theta,\psi,\chi)$ in \cite{Johnson} is via the maps $\theta\mapsto\nu$, $\psi\mapsto 2\myp(1-\theta)/\theta$, $\chi\mapsto \alpha^2\myp\theta/2$.}
\begin{equation}\label{eq:g}
g(\lambda)=\frac{\bigl(2\left(1-\theta\right)^{1/2}\!\mynn/\alpha\theta\bigr)^\nu}{2\,K_\nu\bigl(\alpha\left(1-\theta\right)^{1/2}\bigr)}\, \lambda^{\nu-1}\exp{}\!\!\left(-\frac{(1-\theta)\,\lambda}{\theta}-\frac{\alpha^2\theta}{4\myp\lambda}\right)\qquad(\lambda>0).
\end{equation}
The normalization in \eqref{eq:g} is due to one of the integral representations for the Bessel function \cite[10.32.10]{NIST}. Representation \eqref{eq:GIPG1} explains why formula \eqref{eq:GIGP} defines a probability distribution,
\begin{align*}
\sum_{j=0}^\infty f_j=\int_0^\infty \sum_{j=0}^\infty \frac{\lambda^j\myp\rme^{-\lambda}}{j!}\,g(\lambda)\,\rmd{\lambda}= \int_0^\infty \!g(\lambda)\,\rmd{\lambda}=1,
\end{align*}
and it also leads to a curious identity for the Bessel functions, which does not seem to have been mentioned in the special functions literature,
\begin{equation}\label{eq:==K}
\sum_{j=0}^\infty \frac{\left(\frac12\myp\alpha\mypp \theta\right)^j\mynn K_{\nu+j}(\alpha)}{j!} =\frac{K_\nu\bigl(\alpha\myn
    \left(1-\theta\right)^{1/2}\bigr)}{\left(1-\theta\right)^{\nu/2}}.
\end{equation}

From formula \eqref{eq:GIPG1}, the \strut{}expression \eqref{eq:GIGP} is easily obtained using the normalization of the GIG density \eqref{eq:g} with parameters $\theta$ and $\alpha$ replaced by $\tilde{\theta}=\theta/(1+\theta)$ and $\tilde{\alpha}=\alpha\,\sqrt{1+\theta\mypp}$, respectively.
Furthermore, formula \eqref{eq:GIPG1} implies that the expected value of the GIGP distribution \eqref{eq:GIGP} coincides with that of the 
GIG distribution \eqref{eq:g},
\begin{align}
\notag
\eta=\sum_{j=0}^\infty  jf_j=\int_0^\infty \mynn \sum_{j=0}^\infty j\,\frac{\lambda^j\myp\rme^{-\lambda}}{j!}\,g(\lambda)\,\rmd{\lambda}&= \int_0^\infty \!\lambda\,g(\lambda)\,\rmd{\lambda}\\
&=\frac{\alpha\mypp\theta}{2\left(1-\theta\right)^{1/2}}\cdot\frac{K_{\nu+1}\bigl(\alpha\left(1-\theta\right)^{1/2}\bigr)}{K_{\nu}\bigl(\alpha\left(1-\theta\right)^{1/2}\bigr)},
\label{eq:eta}
\end{align}
where the last computation is based on the normalization in \eqref{eq:g} with order $\nu+1$.\footnote{Expression \eqref{eq:eta} follows directly from the definition \eqref{eq:GIGP} by using the identity \eqref{eq:==K} with order $\nu+1$.}

As was pointed out by Sichel \cite[p.\:315]{Sichel1985}, the frequencies \eqref{eq:GIGP} satisfy the recurrence relation
\begin{equation*}
f_{j+2}=\frac{(\nu+j+1)\mypp\theta}{j+2}\,f_{j+1}+\frac{\alpha^2\myp\theta^2}{4\left(j+2\right)\left(j+1\right)}\,f_{j}\qquad
(j\in\NN_0),
\end{equation*}
which can be obtained by integration by parts of the integral representation mentioned above after formula \eqref{eq:g}.

The tail of the GIGP distribution \eqref{eq:GIGP} has a power-geometric decay, as can be shown using Stirling's formula \cite[5.11.3]{NIST} and the asymptotics \eqref{eq:K5} of the Bessel function of large order, yielding
\begin{equation}\label{eq:Kj1}
 f_j\sim\frac{\left(1-\theta\right)^{\nu/2}\bigl(\tfrac12\myp \alpha\bigr)^{-\nu}}{2\myp K_\nu\bigl(\alpha\myn
    \left(1-\theta\right)^{1/2}\bigr)}\mypp  j^{\nu-1}\myp \theta^j\qquad(j\to\infty).
\end{equation}

\subsection{The boundary case \texorpdfstring{$\alpha=0$}{alpha}}
\label{sec:3.2}
The value $\alpha=0$ can also be included in the GIGP class via the limit $\alpha\to0+$. To this end, we need to consider several cases for the value of the order $\nu$. Namely, if $\nu>0$ then, using the small argument asymptotics of the Bessel function (see \eqref{eq:K2}), we obtain from \eqref{eq:GIGP}
\begin{equation}\label{eq:f01}
f_j\sim \left(1-\theta\right)^\nu\,\frac{\Gamma(\nu+j)\,\theta^j}{\Gamma(\nu)\,j!}=\binom{\nu+j-1}{j}\left(1-\theta\right)^\nu\theta^j \qquad(j\in\NN_0),
\end{equation}
where $\Gamma(z):=\int_0^\infty s^{z-1}\mypp \rme^{-s}\,\rmd{s}$ ($z>0$) is the gamma function \cite[5.2.1]{NIST}. Formula \eqref{eq:f01} defines a \emph{negative binomial distribution} with parameters $\nu$ and $\theta$ \cite[Sec.\:5.1]{Johnson1}, with  the expected value given by
\begin{equation}\label{eq:ex01}
\eta=\frac{\nu\mypp \theta}{1-\theta}.
\end{equation}
The latter  expression is consistent with the limit of \eqref{eq:eta} as $\alpha\to0+$ (again using \eqref{eq:K2}). The tail behavior of \eqref{eq:f01} is retrieved with the aid of Stirling's formula \cite[5.11.3]{NIST},
\begin{equation}\label{eq:Kj2}
 f_j\sim\frac{\left(1-\theta\right)^{\nu}  \mynn j^{\nu-1}\myp \theta^j}{\Gamma(\nu)}\qquad(j\to\infty),
\end{equation}
which is formally in agreement with the
limit of \eqref{eq:Kj1} as $\alpha\to0+$.

However, for $\nu\le0$ the limiting GIGP distribution  degenerates to $f_0=1$ and  $f_j=0$ for all $j\ge1$. Indeed, for $\nu=0$ we get, using the asymptotic formula \eqref{eq:K3},
$$
f_0=\frac{K_0(\alpha)}{K_0\bigl(\alpha\left(1-\theta\right)^{1/2}\bigr)}\sim\frac{-\log \alpha}{-\log\bigl(\alpha\left(1-\theta\right)^{1/2}\bigr)}\to1.
$$
For $\nu<0$, with the aid of the asymptotic formulas \eqref{eq:K1} and \eqref{eq:K2}
we have
$$
f_0=\frac{\left(1-\theta\right)^{\nu/2} K_\nu(\alpha)}{K_\nu\bigl(\alpha\left(1-\theta\right)^{1/2}\bigr)}\sim\frac{\left(1-\theta\right)^{\nu/2}\frac12\mypp\Gamma(-\nu)\bigl(\frac12\myp\alpha\bigr)^{\nu}}{\frac12\mypp\Gamma(-\nu)\bigl(\frac12\myp\alpha\left(1-\theta\right)^{1/2}\bigr)^{\nu}}=1.
$$

To rectify this degeneracy, we switch to the zero-truncated GIGP distribution defined by
$$
\PP(X_i=j\,|\mypp X_i\ge 1) =\frac{f_j}{1-f_0}\qquad (j\in\NN)
$$
and taken in the limit as $\alpha\to0+$. We denote the resulting conditional frequencies by $(\check{f}_j)$ ($j\in\NN$), and the corresponding expected value by $\check{\eta}$. We restrict analysis to the range $-1<\nu\le 0$, and consider separately the cases $\nu=0$ and $-1<\nu<0$
(see Remark \ref{rm:nu=-1} below for why the value $\nu=-1$ is not compatible with $\alpha=0$).

\begin{remark}
The case $\nu<-1$ with $\alpha>0$ is excluded from consideration (see Proposition \ref{pr:eta}\myp(e) and a comment before this proposition). Hence, it is of no interest for us to consider the limit $\alpha\to0$ here.
\end{remark}

\noindent \underline{Case $\nu=0$}

\medskip
\noindent Applying the asymptotic formula \eqref{eq:K3}, we obtain
$$
1-f_0=\frac{K_0\bigl(\alpha\left(1-\theta\right)^{1/2}\bigr)-K_0(\alpha)}{K_0\bigl(\alpha\left(1-\theta\right)^{1/2}\bigr)}\sim \frac{\log\myn(1-\theta)}{\log \alpha},
$$
whereas \eqref{eq:K2} and \eqref{eq:K30} give for $j\ge1$
$$
f_j=\frac{\left(\frac12\myp\alpha\mypp \theta\right)^j}{j!}\cdot \frac{K_{j}(\alpha)}{K_0\bigl(\alpha\myn
    \left(1-\theta\right)^{1/2}\bigr)} \sim \frac{1}{-\log \alpha }\cdot \frac{\theta^j}{j},
    $$
using that $\Gamma(j)=(j-1)!$. Hence,
\begin{equation}\label{eq:f02}
\frac{f_j}{1-f_0}\sim \check{f}_j:=\frac{1}{-\log\left(1-\theta\right)}\cdot\frac{\theta^j}{j}\qquad (j\in\NN),
\end{equation}
which is \emph{Fisher's logarithmic series distribution} \cite[Sec.\,7.1.2]{Johnson1}. Note that the tail behavior of \eqref{eq:f02} is automatically power-geometric akin to \eqref{eq:Kj2} (with $\nu=0$).
The expected value of this distribution is easily computed,
\begin{equation}\label{eq:ex02}
 \check{\eta}=\frac{1}{-\log \left(1-\theta\right)}\sum_{j=1}^\infty \theta^j=\frac{\theta}{\left(1-\theta\right)\mynn\bigl(-\log \left(1-\theta\right)\bigr)}.
\end{equation}

\noindent\underline{Case $-1<\nu<0$}

\medskip
\noindent With the aid of the asymptotic formula \eqref{eq:K4} we get
\begin{align*}
1-f_0&=
    \frac{K_\nu\bigl(\alpha\myn
    \left(1-\theta\right)^{1/2}\bigr)-\left(1-\theta\right)^{\nu/2}K_{\nu}(\alpha)}{K_\nu\bigl(\alpha\myn
    \left(1-\theta\right)^{1/2}\bigr)}\\
    &\sim \frac{\Gamma(\nu+1)}{(-\nu)\,\Gamma(-\nu)}\,\bigl(\tfrac12\myp\alpha
   \bigr)^{-2\myp\nu} \left(1-\left(1-\theta\right)^{-\nu}\right),
\end{align*}
and furthermore, for $j\ge1$,
$$
f_j\sim \frac{ \left(1-\theta\right)^{\nu/2}\left(\frac12\myp\alpha\mypp \theta\right)^j}{j!}\cdot \frac{\frac12\mypp\Gamma(\nu+j)\left(\frac12\myp\alpha\right)^{-\nu-j}}{\frac12\mypp\Gamma(-\nu)\mypp\bigl(\frac12\myp\alpha\myn
    \left(1-\theta\right)^{1/2}\bigr)^\nu} \sim \frac{\Gamma(\nu+j)\,\theta^j}{\Gamma(-\nu)\,j!}\left(\tfrac12\myp\alpha\right)^{-2\myp\nu}.
$$
Hence,
\begin{equation}\label{eq:ext-heg-bin}
\frac{f_j}{1-f_0}\sim \check{f}_j:=\frac{(-\nu)\,\Gamma(\nu+j)\,\theta^j}{\Gamma(\nu+1)\myn\left(1-\left(1-\theta\right)^{-\nu}\right)\myn j!}\qquad (j\in\NN).
\end{equation}
This is an \emph{extended negative binomial} distribution \cite[Sec.\,5.12.2]{Johnson1}, with the expected value
\begin{equation}\label{eq:ex03}
\check{\eta}=\frac{(-\nu)\,\theta\left(1-\theta\right)^{-\nu-1}}{1-\left(1-\theta\right)^{-\nu}}.
\end{equation}
The tail decay of the distribution \eqref{eq:ext-heg-bin} is easily obtained using Stirling's formula \cite[5.11.3]{NIST},
\begin{equation}\label{eq:Kj3}
\check{f}_j\sim \frac{(-\nu)\mypp j^{\nu-1}\mypp\theta^j{}}{\Gamma(\nu+1)\left(1-\left(1-\theta\right)^{-\nu}\right)}\qquad(j\to\infty).
\end{equation}

\begin{remark}\label{rm:nu=-1}
If $\nu=-1$ then, using \eqref{eq:K1}, \eqref{eq:K31} and \eqref{eq:K3}, we have
\begin{align*}
1-f_0
&=
\frac{K_1\bigl(\alpha
   \myn \left(1-\theta\right)^{1/2}\bigr)-\left(1-\theta\right)^{-1/2}K_{1}(\alpha)}{K_1\bigl(\alpha\myn
    \left(1-\theta\right)^{1/2}\bigr)}\sim \tfrac12\myp\alpha^2\myp\theta\left(-\log\alpha\right)\mynn,
\end{align*}
and
\begin{equation*}
f_1=\frac{\left(1-\theta\right)^{-1/2}\bigl(\tfrac12\myp\alpha\mypp \theta\bigr)\mypp K_{0}(\alpha)}{K_1\bigl(\alpha\myn
    \left(1-\theta\right)^{1/2}\bigr)}\sim \tfrac12\myp\alpha^2\myp\theta\left(-\log\alpha\right),
\end{equation*}
hence
$$
\frac{f_1}{1-f_0}\sim \check{f}_1=1.
$$
Thus, the limiting conditional distribution $(\check{f}_j)$ appears to be degenerate, with all mass concentrated at $j=1$. This is unsuitable for the modeling purposes, which explains why  the ``corner'' case $\nu=-1$, $\alpha=0$ is excluded from consideration.
\end{remark}

\subsection{Asymptotics of the GIGP mean}\label{sec:3.3}

As indicated by the integer partition example in Section \ref{sec:2.2}, for the existence of a meaningful limit shape, the area of the Young diagram must grow faster than the number of constituent blocks (see \eqref{eq:M>>}). In the context of the item production model, this means that the total number of items, $N=\sum_j j\myp M_j$, should be much larger than the number of sources, $M=\sum_j M_j$. Recalling from \eqref{eq:EN} that the expected total number of items is given by $\EE(N)=M\eta$ (where \strut{}$\eta=\EE(X_i)$ is the expected number of items per source, see \eqref{eq:eta0}), this implies that a suitable limiting regime is determined by $\eta\to\infty$.

In turn, from the expression \eqref{eq:eta} for the GIGP mean $\eta$, one can hypothesize that the latter is achieved if $\theta\approx 1$, while the parameters $\alpha$ and $\nu$ are kept fixed. This can be verified (cf.\ Proposition \ref{pr:eta} below) using the known asymptotic formulas for the Bessel function $K_\nu(z)$ with $z\to0$, adapted to our needs in the next lemma.
\begin{lemma}\label{lm:K}
For $\alpha>0$ and $\nu\in\RR$ fixed, the following asymptotics hold as\/ $\theta\to1-$,
\begin{equation}
\label{eq:KK}
K_\nu\bigl(\alpha\left(1-\theta\right)^{1/2}\bigr)\sim \begin{cases}
\tfrac12\mypp\Gamma(\nu)\bigl(\tfrac12\myp\alpha\bigr)^{-\nu}\mynn\left(1-\theta\right)^{-\nu/2}&(\nu>0),\\[.4pc]
\tfrac12\bigl(-\log\myn(1-\theta)\bigr)&(\nu=0),\\[.4pc]
\tfrac12\mypp\Gamma(-\nu)\bigl(\tfrac12\myp\alpha\bigr)^{\nu}\mynn\left(1-\theta\right)^{\nu/2}&(\nu<0).
\end{cases}
\end{equation}
\end{lemma}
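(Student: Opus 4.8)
The plan is to reduce everything to the standard small-argument asymptotics of $K_\nu(z)$ as $z\to0+$, since $z=\alpha(1-\theta)^{1/2}\to0$ precisely when $\theta\to1-$ (with $\alpha>0$ fixed). The relevant formulas are exactly those collected in Appendix~\ref{sec:A} (referenced in the excerpt as \eqref{eq:K1}--\eqref{eq:K4}), namely: for $\nu>0$, $K_\nu(z)\sim\tfrac12\Gamma(\nu)(\tfrac12 z)^{-\nu}$; for $\nu=0$, $K_0(z)\sim-\log z$; and for $\nu<0$, using $K_\nu=K_{-\nu}$, one gets $K_\nu(z)=K_{-\nu}(z)\sim\tfrac12\Gamma(-\nu)(\tfrac12 z)^{\nu}$. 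So the proof is essentially a substitution $z=\alpha(1-\theta)^{1/2}$ into each of these three cases.

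Concretely, for the case $\nu>0$ I would write $\tfrac12 z=\tfrac12\alpha(1-\theta)^{1/2}$, so $(\tfrac12 z)^{-\nu}=(\tfrac12\alpha)^{-\nu}(1-\theta)^{-\nu/2}$, giving $K_\nu(\alpha(1-\theta)^{1/2})\sim\tfrac12\Gamma(\nu)(\tfrac12\alpha)^{-\nu}(1-\theta)^{-\nu/2}$, which is the first line of \eqref{eq:KK}. For $\nu=0$, $K_0(\alpha(1-\theta)^{1/2})\sim-\log\bigl(\alpha(1-\theta)^{1/2}\bigr)=-\log\alpha-\tfrac12\log(1-\theta)\sim-\tfrac12\log(1-\theta)=\tfrac12\bigl(-\log(1-\theta)\bigr)$, since $\log(1-\theta)\to-\infty$ dominates the constant $\log\alpha$; this is the middle line. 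For $\nu<0$, $(\tfrac12 z)^{\nu}=(\tfrac12\alpha)^{\nu}(1-\theta)^{\nu/2}$, yielding $K_\nu(\alpha(1-\theta)^{1/2})\sim\tfrac12\Gamma(-\nu)(\tfrac12\alpha)^{\nu}(1-\theta)^{\nu/2}$, the third line. Each step is just plugging in and collecting powers.

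There is no real obstacle here; the only thing worth a moment's care is the $\nu=0$ case, where one must observe that the additive constant $-\log\alpha$ is asymptotically negligible against $-\tfrac12\log(1-\theta)$, so the asymptotic equivalence (as opposed to mere boundedness of the ratio) genuinely holds. One should also note that the claimed asymptotics are uniform in nothing — $\alpha$ and $\nu$ are held fixed throughout — so there is no uniformity subtlety to address. The whole lemma is a bookkeeping exercise on top of the classical Bessel asymptotics, and I would present it as such, with one displayed line per case and a brief remark handling the logarithm in the $\nu=0$ case.
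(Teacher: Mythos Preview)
Your proposal is correct and follows essentially the same approach as the paper: the paper's proof is a single sentence invoking the small-argument asymptotics \eqref{eq:K2} (with the symmetry \eqref{eq:K1} for $\nu<0$) and \eqref{eq:K30} for $\nu=0$, which is exactly the substitution argument you spell out. If anything, your version is more detailed than the paper's, particularly in the $\nu=0$ case where you explicitly justify why the $-\log\alpha$ term is negligible.
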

\begin{proof}
 The leading terms of the asymptotics \eqref{eq:KK} follow directly from formulas \eqref{eq:K2} for $\nu\ne 0$ (with the aid of \eqref{eq:K1} for $\nu<0$) and \eqref{eq:K30} for $\nu=0$,
\end{proof}

Using this lemma, we can  characterize more precisely the asymptotic behavior of the GIGP mean  in the limit as $1-\theta\to 0+$. In particular, this analysis reveals that the desired growth to infinity is in place for $\nu\ge-1$, but fails for $\nu<-1$.
\begin{proposition}\label{pr:eta}
The expected values $\eta$ and $\check{\eta}$
of the GIGP $(\alpha>0)$ and zero-truncated GIGP $(\alpha=0)$ distributions, respectively,
have the following asymptotics as $\theta\to1-$.
\begin{itemize}
\item[\rm(a)]
$\nu>0$, \,$\alpha\ge 0$\mypp\textup{:}
\begin{equation}\label{eq:eta1}
\eta\sim\frac{\nu}{1-\theta}.
\end{equation}
\item[\rm(b)]
$\nu=0$, \,$\alpha\ge0$\mypp\textup{:}
\begin{equation}\label{eq:eta2}
 \eta\sim \frac{1}{(1-\theta)\myp\bigl(-\log\myn(1-\theta)\bigr)},\qquad  \check{\eta}\sim \frac{1}{(1-\theta)\myp\bigl(-\log\myn(1-\theta)\bigr)}.
\end{equation}
\item[\rm(c)]
$-1<\nu<0$, \,$\alpha\ge0$\mypp\textup{:}
\begin{equation}\label{eq:eta3}
\eta\sim\frac{\Gamma(\nu+1)\bigl(\tfrac12\myp\alpha\bigr)^{-2\myp\nu}}{\Gamma(-\nu)\left(1-\theta\right)^{\nu+1}},\qquad \check{\eta}\sim\frac{-\nu}{\left(1-\theta\right)^{\nu+1}}.
\end{equation}

\item[\rm(d)]
$\nu=-1$, \,$\alpha>0$\mypp\textup{:}
\begin{equation}\label{eq:eta4}
\eta\sim \bigl(\tfrac12\myp\alpha\bigr)^2 \bigl(-\log\myn(1-\theta)\bigr).
\end{equation}

\item[\rm(e)]
$\nu<-1$, \,$\alpha>0$\mypp\textup{:}
\begin{equation}\label{eq:eta5}
\eta\sim\frac{\bigl(\tfrac12\myp\alpha\bigr)^{2}}{-\nu-1}.
\end{equation}

\end{itemize}

\end{proposition}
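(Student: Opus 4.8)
The plan is to separate the generic case $\alpha>0$, where the GIGP mean is given in closed form by \eqref{eq:eta}, from the boundary case $\alpha=0$, where the relevant means were already identified explicitly in Section~\ref{sec:3.2}. For $\alpha>0$, write \eqref{eq:eta} as
$$
\eta=\frac{\alpha\myp\theta}{2\left(1-\theta\right)^{1/2}}\cdot\frac{K_{\nu+1}(z)}{K_{\nu}(z)},\qquad z:=\alpha\left(1-\theta\right)^{1/2},
$$
and observe that $z\to0+$ as $\theta\to1-$. The whole problem then reduces to inserting the small-argument asymptotics of $K_{\nu}(z)$ and $K_{\nu+1}(z)$ supplied by Lemma~\ref{lm:K} (applied once with order $\nu$ and once with order $\nu+1$). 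Because the leading term in \eqref{eq:KK} has a different analytic form according to whether the order is positive, zero, or negative, the five cases (a)--(e) are exactly the five possible sign patterns of the pair $(\nu,\nu+1)$ relative to $0$: both positive ($\nu>0$); $\nu=0$ with $\nu+1>0$; $\nu<0$ with $\nu+1>0$, i.e.\ $-1<\nu<0$; $\nu<0$ with $\nu+1=0$, i.e.\ $\nu=-1$; and both negative ($\nu<-1$).

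In each case I would substitute the two applicable lines of \eqref{eq:KK}, cancel the common factors between the numerator and denominator of the ratio, combine the surviving powers of $\bigl(\tfrac12\alpha\bigr)$ and of $\left(1-\theta\right)^{1/2}$ with the prefactor $\alpha\myp\theta\bigl(2(1-\theta)^{1/2}\bigr)^{-1}$, use $\theta\to1$ to drop the factor $\theta$, and simplify gamma-function ratios via $\Gamma(w+1)=w\myp\Gamma(w)$ where needed (namely with $w=\nu$ in case~(a), producing the factor $\nu$, and with $w=-\nu-1$ in case~(e), collapsing $\Gamma(-\nu-1)/\Gamma(-\nu)$ to $1/(-\nu-1)$). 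This short computation gives \eqref{eq:eta1}, the $\eta$-part of \eqref{eq:eta2}, the $\eta$-part of \eqref{eq:eta3}, \eqref{eq:eta4} and \eqref{eq:eta5}, respectively. For $\alpha=0$ no Bessel-function computation is needed: when $\nu>0$ the distribution is negative binomial with mean \eqref{eq:ex01}, so $\eta=\nu\myp\theta/(1-\theta)\sim\nu/(1-\theta)$, agreeing with (a); when $\nu=0$ the zero-truncated limit is Fisher's logarithmic series distribution with mean \eqref{eq:ex02}, whence $\check\eta\sim\bigl((1-\theta)(-\log(1-\theta))\bigr)^{-1}$, the $\check\eta$-part of (b); and when $-1<\nu<0$ the limit is the extended negative binomial distribution with mean \eqref{eq:ex03}, in which $\left(1-\theta\right)^{-\nu}\to0$ so that $\check\eta\sim(-\nu)\left(1-\theta\right)^{-\nu-1}$, the $\check\eta$-part of (c). The corner case $\nu=-1$, $\alpha=0$ is excluded by Remark~\ref{rm:nu=-1}, and for $\nu<-1$ only $\alpha>0$ is considered.

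Finally, I would note that the resulting formulas make the growth dichotomy announced before the proposition immediate: in (a) one has $\nu/(1-\theta)\to\infty$; in (c), $\left(1-\theta\right)^{-(\nu+1)}\to\infty$ because $\nu+1>0$; in (b) and (d), $\left(1-\theta\right)(-\log(1-\theta))\to0$ and $-\log(1-\theta)\to\infty$ force $\eta,\check\eta\to\infty$; but in (e) the right-hand side is the finite constant $\bigl(\tfrac12\alpha\bigr)^2/(-\nu-1)$. Thus $\eta\to\infty$ exactly when $\nu\ge-1$, the range needed for a non-degenerate limit shape. The exercise is one of careful bookkeeping rather than hard analysis: the sole pitfall is to keep the branch of \eqref{eq:KK} used for order $\nu$ consistent with, yet chosen independently of, the branch used for order $\nu+1$, since it is the crossing of $\nu+1$ through $0$ (at $\nu=-1$) and of $\nu$ through $0$ (at $\nu=0$) that generates the logarithmic terms and switches the regime, and it is precisely near these two transitions that a branch is easy to mis-assign.
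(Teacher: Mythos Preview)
Your proposal is correct and follows essentially the same approach as the paper: apply Lemma~\ref{lm:K} separately to the orders $\nu$ and $\nu+1$ in the ratio $K_{\nu+1}(z)/K_\nu(z)$ from \eqref{eq:eta}, simplify via $\Gamma(w+1)=w\,\Gamma(w)$, and for $\alpha=0$ read off the asymptotics directly from the explicit means \eqref{eq:ex01}, \eqref{eq:ex02}, \eqref{eq:ex03}. Your framing of the five cases as the five sign patterns of the pair $(\nu,\nu+1)$ is a helpful organizing remark that the paper leaves implicit.
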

\begin{proof} Consider cases (a)--(e) using the asymptotic formulas of Lemma \ref{lm:K}.
\begin{itemize}
\item[\rm (a)] For $\alpha>0$,
using the first line of \eqref{eq:KK} for orders $\nu$ and $\nu+1$, we have
\begin{equation}\label{eq:eta1a}
\frac{K_{\nu+1}\bigl(\alpha\left(1-\theta\right)^{1/2}\bigr)}{K_{\nu}\bigl(\alpha\left(1-\theta\right)^{1/2}\bigr)}\sim \frac{\frac12\,
\Gamma(\nu+1)\bigl(\frac12\myp\alpha\left(1-\theta\right)^{1/2}\bigr)^{-\nu-1}}{\frac12\mypp\Gamma(\nu)\bigl(\frac12\myp\alpha\left(1-\theta\right)^{1/2}\bigr)^{-\nu}}=\frac{\nu}{\frac12\myp\alpha\left(1-\theta\right)^{1/2}},
\end{equation}
where we also used the recurrence property of the gamma function, $\Gamma(\nu+1)=\nu\,\Gamma(\nu)$
\cite[5.5.1]{NIST}. Substituting this into \eqref{eq:eta} gives \eqref{eq:eta1}. If $\alpha=0$ then \eqref{eq:eta1} readily follows from
\eqref{eq:ex01}.

\item[\rm (b)]
For $\alpha>0$, formulas \eqref{eq:KK} with $\nu=0$ and $\nu=1$ give
\begin{equation}\label{eq:eta1b}
\frac{K_{1}\bigl(\alpha\left(1-\theta\right)^{1/2}\bigr)}{K_{0}\bigl(\alpha\left(1-\theta\right)^{1/2}\bigr)}
\sim \frac{\bigl(\frac12\myp\alpha\left(1-\theta\right)^{1/2}\bigr)^{-1}}{-\log \left(1-\theta\right)},
\end{equation}
and the first formula in \eqref{eq:eta2} follows from \eqref{eq:eta}.
If $\alpha=0$ then formula  \eqref{eq:ex02} immediately gives the second formula in \eqref{eq:eta2}.

\item[\rm (c)]
For $\alpha>0$, using the symmetry relation  \eqref{eq:K1}, similarly to \eqref{eq:eta1a} we obtain
\begin{align*}
\frac{K_{\nu+1}\bigl(\alpha\left(1-\theta\right)^{1/2}\bigr)}{K_{\nu}\bigl(\alpha\left(1-\theta\right)^{1/2}\bigr)}
\sim
\frac{\Gamma(\nu+1)\bigl(\frac12\myp\alpha\left(1-\theta\right)^{1/2}\bigr)^{-2\myp\nu-1}}{\Gamma(-\nu)},
\end{align*}
and the first formula in \eqref{eq:eta3} then follows from \eqref{eq:eta}.
The second formula in \eqref{eq:eta3} is immediate from \eqref{eq:ex03}.

\item[\rm (d)]
Follows from \eqref{eq:eta} using the symmetry relation
\eqref{eq:K1} and the asymptotic ratio \eqref{eq:eta1b}.

\item[\rm (e)] Again using \eqref{eq:K1} and the first line of \eqref{eq:KK} with orders $-\nu>0$ and $-\nu-1>0$, we obtain
\begin{equation*}
\frac{K_{\nu+1}\bigl(\alpha\left(1-\theta\right)^{1/2}\bigr)}{K_{\nu}\bigl(\alpha\left(1-\theta\right)^{1/2}\bigr)}
\sim \frac{\Gamma(-\nu-1)\bigl(\frac12\myp\alpha\left(1-\theta\right)^{1/2}\bigr)}{\Gamma(-\nu)},
\end{equation*}
and \eqref{eq:eta5} follows from \eqref{eq:eta}, again using the recurrence $\Gamma(z+1)=z\,\Gamma(z)$
\cite[5.5.1]{NIST}, now with $z=-\nu-1$.
\end{itemize}
Thus, the proof of Proposition \ref{pr:eta} is complete.
\end{proof}

Proposition \ref{pr:eta} describes the growth of the expected value $\eta$ (for $\alpha>0$) or $\check{\eta}$ (for $\alpha=0$) in terms of the small parameter $1-\theta$. For the purposes of the GIGP model fitting, it is useful to express $1-\theta$ through $\eta$ or $\check{\eta}$, respectively, by solving the asymptotic equations \eqref{eq:eta1}, \eqref{eq:eta2}, \eqref{eq:eta3}, and \eqref{eq:eta4}.

\begin{proposition}\label{pr:theta}
Under the conditions of Proposition \ref{pr:eta}, the following asymptotics hold.
 \begin{itemize}
\item[\rm(a)]
$\nu>0$, \,$\alpha\ge 0$\mypp\textup{:}
\begin{equation*}
1-\theta\sim\frac{\nu}{\eta}.
\end{equation*}
\item[\rm(b)]
$\nu=0$, \,$\alpha\ge0$\mypp\textup{:}
\begin{equation*}
1-\theta\sim \frac{1}{\eta\myp \log\eta},\qquad 1-\theta\sim \frac{1}{\check{\eta}\myp \log\check{\eta}}.
\end{equation*}
\item[\rm(c)]
$-1<\nu<0$, \,$\alpha\ge0$\mypp\textup{:}
\begin{equation*}
1-\theta\sim\left(\frac{\Gamma(\nu+1)\bigl(\tfrac12\myp\alpha\bigr)^{-2\myp\nu}}{\Gamma(-\nu)\,\eta}\right)^{\!1/(\nu+1)}\!,\qquad 1-\theta\sim\left(\frac{-\nu}{\check{\eta}}\right)^{\!1/(\nu+1)}\!.
\end{equation*}

\item[\rm(d)]
$\nu=-1$, \,$\alpha>0$\mypp\textup{:}
\begin{equation*}
\log\myn(1-\theta)\sim -\frac{4\myp\eta}{\alpha^2}.
\end{equation*}
\end{itemize}
\end{proposition}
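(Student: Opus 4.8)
The plan is to invert, one case at a time, the asymptotic relations established in Proposition \ref{pr:eta}, now reading $\eta$ (or $\check\eta$) as the large parameter. In cases (a), (c) and (d) this is a purely algebraic rearrangement. Each relation there has the form $\eta\sim c\,(1-\theta)^{-p}$ (with $p=1$ in (a), and $0<p=\nu+1<1$ in (c)) or $\eta\sim c\,(-\log(1-\theta))$ in (d), for an explicit constant $c>0$; the same holds with $\check\eta$ where indicated. Since by Proposition \ref{pr:eta} we have $\eta\to\infty$ as $\theta\to1-$, the relation $\eta\sim c\,(1-\theta)^{-p}$ is equivalent to $(1-\theta)^p\sim c/\eta$; applying the map $t\mapsto t^{1/p}$, which is continuous on $(0,\infty)$ with no obstruction since $p>0$, and using that $a_\theta/b_\theta\to1$ implies $(a_\theta/b_\theta)^{1/p}\to1$, I obtain $1-\theta\sim(c/\eta)^{1/p}$ — precisely the claimed expressions in (a) and (c). Case (d) is the same after dividing by $c=(\tfrac12\alpha)^2$ and flipping a sign: $-\log(1-\theta)\sim\eta/(\tfrac12\alpha)^2=4\eta/\alpha^2$.

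The only case requiring a genuine, if small, argument is (b), where $\eta\sim\bigl((1-\theta)(-\log(1-\theta))\bigr)^{-1}$ and one must dispose of the nested logarithm. Writing $u=1-\theta\to0+$, I would first take logarithms of this relation:
\[
\log\eta=\log\frac1u-\log\!\Bigl(\log\frac1u\Bigr)+o(1)\qquad(u\to0+).
\]
Since $\log\frac1u\to\infty$ and $\log\log\frac1u=o\bigl(\log\frac1u\bigr)$, this yields $\log\eta\sim\log\frac1u=-\log(1-\theta)$. Substituting this equivalence back into $\eta\log\frac1u\sim\frac1u$ (which is just the original relation rewritten) gives $\eta\log\eta\sim\frac1u$, i.e.\ $1-\theta\sim\frac1{\eta\log\eta}$. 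The argument for $\check\eta$ is identical, because by Proposition \ref{pr:eta}(b) $\check\eta$ has exactly the same leading asymptotics as $\eta$.

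The main — indeed essentially the only — obstacle is this bookkeeping in case (b): one must check that replacing $\log\frac1{1-\theta}$ by $\log\eta$ inside the formula is legitimate at leading order, which is exactly the content of $\log\log\frac1{1-\theta}=o\bigl(\log\frac1{1-\theta}\bigr)$. Everything else rests on the elementary fact that a positive power, a reciprocal, or a logarithm of an asymptotic equivalence is again an asymptotic equivalence, so no further care is needed; in particular, no appeal to monotonicity of $\theta\mapsto\eta$ is required for the equivalences as stated, although that monotonicity is what makes ``expressing $1-\theta$ through $\eta$'' meaningful in practice.
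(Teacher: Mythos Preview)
Your proposal is correct. The paper itself gives no proof of this proposition at all, treating it as an immediate consequence of ``solving the asymptotic equations \eqref{eq:eta1}, \eqref{eq:eta2}, \eqref{eq:eta3}, and \eqref{eq:eta4}''; your argument supplies precisely the details the paper omits, including the only nontrivial step, namely the bootstrap $\log\eta\sim -\log(1-\theta)$ in case~(b).
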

\begin{remark}
Formula \eqref{eq:eta4} provides only the logarithmic asymptotics of $1-\theta$, but this suffices for the estimation purposes.
\end{remark}

\section{The limit shape in the GIGP model}\label{sec:4}

\subsection{Scaling coefficients and the main theorem}\label{sec:4.1}
Let the frequencies $f_j$ ($j\in\NN_0$) be given by the GIGP distribution formula \eqref{eq:GIGP} with parameters $0<\theta<1$, $\nu\ge -1$ and $\alpha\ge0$, excluding the ``corner'' pair  $\nu=-1$, $\alpha=0$. The case $\alpha=0$ is understood as the limit of conditional probabilities $\PP(X_i=j\,|\mypp X_i>0)=f_j/(1-f_0)$ ($j\in\NN$) as $\alpha\to0+$ (see Section \ref{sec:3.2}).

Given the random vector of observed multiplicities $(M_j)$ produced by $M$ sources, our aim is to study the asymptotics of scaled Young diagrams with the boundary (see \eqref{eq:Y-tilde})
\begin{equation}\label{eq:tildeY}
\widetilde{Y}(x):=\frac{Y(A\myp x)}{B}=\frac{1}{B}\sum_{j\ge
Ax} M_j=\frac{1}{B}\sum_{i=1}^M Z_i(A\myp x)\qquad (x\ge0).
\end{equation}
We proceed under the following assumptions on the limiting regime, including the specification of the scaling coefficients $A$ and $B$.

\begin{assumption}\label{as:theta}
The number of sources is large,  $M\to\infty$. In addition, the intrinsic parameter $\theta\in(0,1)$ is assumed to be close to its upper limit $1$, that is, $\theta\to1-$, which guarantees that the  mean number of items per source is large (see Proposition \ref{pr:eta}).
\end{assumption}

\begin{assumption}\label{as:AB}
The $x$-scaling coefficient $A$ is chosen to be
\begin{equation}\label{eq:A}
A=\frac{1}{-\log \theta}\sim \frac{1}{1-\theta}\to\infty\qquad (\theta\to1-),
\end{equation}
whereas the $y$-scaling coefficient $B$ is specified according to particular domains in the space of parameters $\nu$ and $\alpha$ as follows:
\begin{itemize}
\item[(a)]
$\nu>0$, $\alpha\ge 0$\mypp\textup{:}
\begin{equation}\label{eq:B1}
B=\frac{M}{\Gamma(\nu)}.
\end{equation}
\item[(b)]
$\nu=0$, $\alpha\ge0$\mypp\textup{:}
\begin{equation}\label{eq:B2}
 B=\frac{M}{-\log\myn(1-\theta)}.
\end{equation}
\item[(c)]
$-1\le \nu<0$, $\alpha>0$\mypp\textup{:}
\begin{equation}\label{eq:B3}
B=\frac{\displaystyle M\mypp\bigl(\tfrac12\myp\alpha\bigr)^{-2\myp\nu}
\mynn \left(1-\theta\right)^{-\nu}
}{\Gamma(-\nu)}.
\end{equation}
\item[(d)]
$-1<\nu<0$, $\alpha=0$\mypp\textup{:}
\begin{equation}\label{eq:B4}
B=\frac{M\mypp(-\nu)\myn \left(1-\theta\right)^{-\nu}}{\Gamma(\nu+1)}.
\end{equation}
\end{itemize}
\end{assumption}

\begin{assumption}\label{as:B}
The $y$-scaling coefficient $B$ defined in Assumption \ref{as:AB} is large, $B\to\infty$. For $\nu>0$, this is automatic according to \eqref{eq:B1} (as long as $M\to\infty$), but for $\nu\le0$ we must assume in addition that $M\gg-\log\myn(1-\theta)$ if $\nu=0$ and $M\gg \left(1-\theta\right)^{\nu}$ if $\nu<0$.
\end{assumption}
\begin{remark}\label{rm:M}
The need to impose an additional condition in Assumption \ref{as:B} on the joint limiting behavior of the external parameter $M\to\infty $ and the intrinsic GIGP parameter $\theta\to1-$ for $\nu\le0$ shows that, in order to have a manifested limit shape in the data, the number of sources, $M$, must be sufficiently large. We will clarify the opposite situation below in Section  \ref{sec:6}.
\end{remark}

For $\nu\ge -1$, consider the function
\begin{equation}\label{eq:phi_nu} \varphi_\nu(x):=\int_x^\infty\! s^{\nu-1}\mypp \rme^{-s}\,\rmd{s}\qquad (x>0),
\end{equation}
which is the \emph{(upper) incomplete gamma function} \cite[8.2.2]{NIST}. The following is our main result, establishing convergence in probability of the scaled Young diagrams (see \eqref{eq:tildeY}) to the limit shape $\varphi_\nu(x)$.

\begin{theorem}\label{th:main}
Under Assumptions \ref{as:theta}, \ref{as:AB} and \ref{as:B}, for any $\varepsilon >0$ and any $\delta>0$ we have
\begin{equation}\label{eq:LS0}
\PP\left(\sup_{x\ge\delta}\mypp\bigl|\widetilde{Y}(x)-\varphi_\nu(x)\bigr|\ge \varepsilon\right)\to0.
\end{equation}
\end{theorem}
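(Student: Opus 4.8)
The plan is to prove \eqref{eq:LS0} by combining the pointwise control of $\widetilde{Y}(x)$ (which the paper establishes in Theorems \ref{th:conv_exp} and \ref{th:conv_Y}) with a maximal inequality, since the supremum over the continuum $x\ge\delta$ cannot be handled term-by-term. The natural device is a time inversion $t=1/x$, turning the decreasing process $x\mapsto Y(Ax)$ into an increasing one, and then exhibiting a martingale (or submartingale) structure so that the Doob--Kolmogorov submartingale inequality applies uniformly on the whole half-line $x\ge\delta$, i.e.\ $t\in(0,1/\delta]$. Concretely, for each fixed $x$ the centered process $Y(Ax)-\EE(Y(Ax))=\sum_{i=1}^M\bigl(Z_i(Ax)-\bar F(Ax)\bigr)$ is, in the variable $t=1/x$, a sum over $M$ i.i.d.\ copies of a centered process with independent (monotone) increments in $t$; hence it is a martingale in $t$ with respect to the natural filtration. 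This is exactly the content of Lemma \ref{lm:W} referenced in Section \ref{sec:4.4}.

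First I would invoke Lemma \ref{lm:W} to get that $W(t):=\tfrac1B\bigl(Y(A/t)-\EE Y(A/t)\bigr)$ is a mean-zero martingale in $t$ on $(0,1/\delta]$, so that $t\mapsto W(t)^2$ is a nonnegative submartingale. The Doob--Kolmogorov inequality \cite[Sec.\,\ref{sec:4.5}]{Yeh} then yields
\[
\PP\!\left(\sup_{0<t\le 1/\delta}|W(t)|\ge \tfrac{\varepsilon}{2}\right)\le \frac{4}{\varepsilon^{2}}\,\EE\bigl(W(1/\delta)^2\bigr)=\frac{4}{\varepsilon^{2}}\,\frac{\Var\bigl(Y(A\delta)\bigr)}{B^{2}}=\frac{4}{\varepsilon^{2}}\cdot\frac{M\,\bar F(A\delta)\,F(A\delta)}{B^{2}},
\]
using \eqref{eq:Y-exp_var}. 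The right-hand side tends to $0$: indeed $M\bar F(A\delta)/B=\EE(\widetilde Y(\delta))\to\varphi_\nu(\delta)<\infty$ by Theorem \ref{th:conv_exp}, while the extra factor $1/B\to0$ by Assumption \ref{as:B} (and $F(A\delta)\le1$). This controls the fluctuation part uniformly in $x\ge\delta$.

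Next I would handle the deterministic part. Write
\[
\bigl|\widetilde Y(x)-\varphi_\nu(x)\bigr|\le \bigl|\widetilde Y(x)-\EE\widetilde Y(x)\bigr|+\bigl|\EE\widetilde Y(x)-\varphi_\nu(x)\bigr|,
\]
and observe that since both $x\mapsto \EE\widetilde Y(x)=M\bar F(Ax)/B$ and $x\mapsto\varphi_\nu(x)$ are nonincreasing, a Pólya-type / Dini argument gives that pointwise convergence $\EE\widetilde Y(x)\to\varphi_\nu(x)$ (Theorem \ref{th:conv_exp}) together with continuity of the limit $\varphi_\nu$ on $[\delta,\infty)$ upgrades automatically to uniform convergence on $[\delta,\infty)$; the only point needing care is the behavior at $x\to\infty$, where both $\EE\widetilde Y(x)$ and $\varphi_\nu(x)$ vanish, so a single cutoff $x\le R$ with $R$ large suffices to make the tail uniformly small, and on the compact $[\delta,R]$ monotone pointwise convergence to a continuous limit is uniform. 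Hence $\sup_{x\ge\delta}|\EE\widetilde Y(x)-\varphi_\nu(x)|\to0$, so for large enough parameters it is below $\varepsilon/2$. Combining this with the maximal inequality above and a union bound gives \eqref{eq:LS0}.

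The main obstacle is the first step: verifying that the time-inverted, centered and normalized Young boundary really is a martingale (Lemma \ref{lm:W}) and that the relevant second moment is finite and correctly bounded. This hinges on the fact that each $Z_i(x)=I_{\{X_i\ge x\}}$ has monotone sample paths in $x$, so that in the variable $t=1/x$ the increments of $Z_i$ are deterministic functions of the single random variable $X_i$ and the increments of the sum over $i$ are conditionally mean-zero given the past — a structure that must be set up carefully because the index $x$ ranges over a continuum rather than the integers. (The alternative route sketched in Section \ref{sec:4.6}, viewing $Y(Ax)/M$ as an empirical survival function and applying a Dvoretzky--Kiefer--Wolfowitz or bounded-differences concentration inequality \cite{BLM}, would replace the martingale argument by a uniform-over-$x$ concentration bound of the same order $M\bar F(A\delta)/B^2$, and is arguably cleaner; either way the decisive input is the gain of one extra factor $1/B\to0$ from Assumption \ref{as:B}.)
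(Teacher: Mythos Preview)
Your overall strategy coincides with the paper's: split $|\widetilde Y(x)-\varphi_\nu(x)|$ into the fluctuation $|\widetilde Y(x)-\EE\widetilde Y(x)|$ and the bias $|\EE\widetilde Y(x)-\varphi_\nu(x)|$, kill the bias uniformly via Theorem~\ref{th:conv_exp} (your P\'olya--Dini argument is exactly the paper's Lemma~\ref{lm:uniform}), and control the fluctuation uniformly via the martingale of Lemma~\ref{lm:W} and the Doob--Kolmogorov inequality.

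There is, however, a concrete technical error in your martingale step. The process you write, $W(t)=\tfrac{1}{B}\bigl(Y(A/t)-\EE Y(A/t)\bigr)$, is \emph{not} a martingale in $t$. Indeed, for $t>s$, on the event $\{X_i\ge A/s\}$ one has $I_{\{X_i\ge A/t\}}=1$ deterministically, so
\[
\EE\bigl[I_{\{X_i\ge A/t\}}-\bar F(A/t)\,\big|\,\mathcal F_s\bigr]=F(A/t)\ne F(A/s)=I_{\{X_i\ge A/s\}}-\bar F(A/s),
\]
and the martingale identity fails. What Lemma~\ref{lm:W} actually proves is that
\[
W(t)=\sum_{i=1}^M\frac{I_{\{X_i<1/t\}}}{F(1/t)}-M=-\frac{Y(1/t)-M\bar F(1/t)}{F(1/t)}
\]
is a martingale; the division by the deterministic factor $F(1/t)$ is precisely what makes each summand a (likelihood-ratio type) martingale. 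The paper then writes $\widetilde Y(x)-M\bar F(Ax)/B=-\tfrac{F(Ax)}{B}\,W(1/Ax)$, uses $|F(Ax)|\le 1$ to pass to $\sup_{x\ge\delta}|\widetilde Y(x)-\EE\widetilde Y(x)|\le B^{-1}\sup_{t\le 1/A\delta}|W(t)|$, and applies Doob--Kolmogorov to obtain the bound $M\bar F(A\delta)/\bigl(F(A\delta)\,B^2\varepsilon^2\bigr)\sim\varphi_\nu(\delta)/(B\varepsilon^2)\to 0$. With this corrected $W$, your argument goes through verbatim and matches the paper's proof.
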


The proof of Theorem \ref{th:main} is developed below in Sections \ref{sec:4.2} to \ref{sec:4.6}.

\subsection{Graphical illustration using computer simulations}\label{sec:4.11}
In this section, we illustrate the limit shape approximation using computer simulated data in two example cases, with $\nu=0.5$ and $\nu=-0.5$ (see Fig.\,\ref{GIGPsample}, left panels). The other parameter settings are as follows, $\alpha=2$. $\theta=0.99$, and $M=1\myp 000$. The plots depict the data as the upper boundary of the Young diagram $Y(x)$ defined in \eqref{eq:Y} and the theoretical GIGP complementary distribution function $\bar{F}(x)$ (see \eqref{eq:Z1} and \eqref{eq:GIGP}), along with the limit shape scaled back to the original frequencies of counts, that is, $x\mapsto B\,\varphi_\nu(x/A)$, where $A=-1/\log\theta\doteq 99.49916$ (see \eqref{eq:A}) and $B\doteq 564.1896$ for $\nu=0.5$ or $B\doteq 56.41896$ for $\nu=-0.5$ (see \eqref{eq:B1} and \eqref{eq:B3}, respectively). In both cases, the plots show a very good fit of the limit shape in the bulk of the observed values.

\begin{figure}[h!]
\renewcommand{\thesubfigure}{}
\centering
\subfigure[\raisebox{4.2pc}{(a) $\nu=0.5$}]
{\includegraphics[width=.47\textwidth]{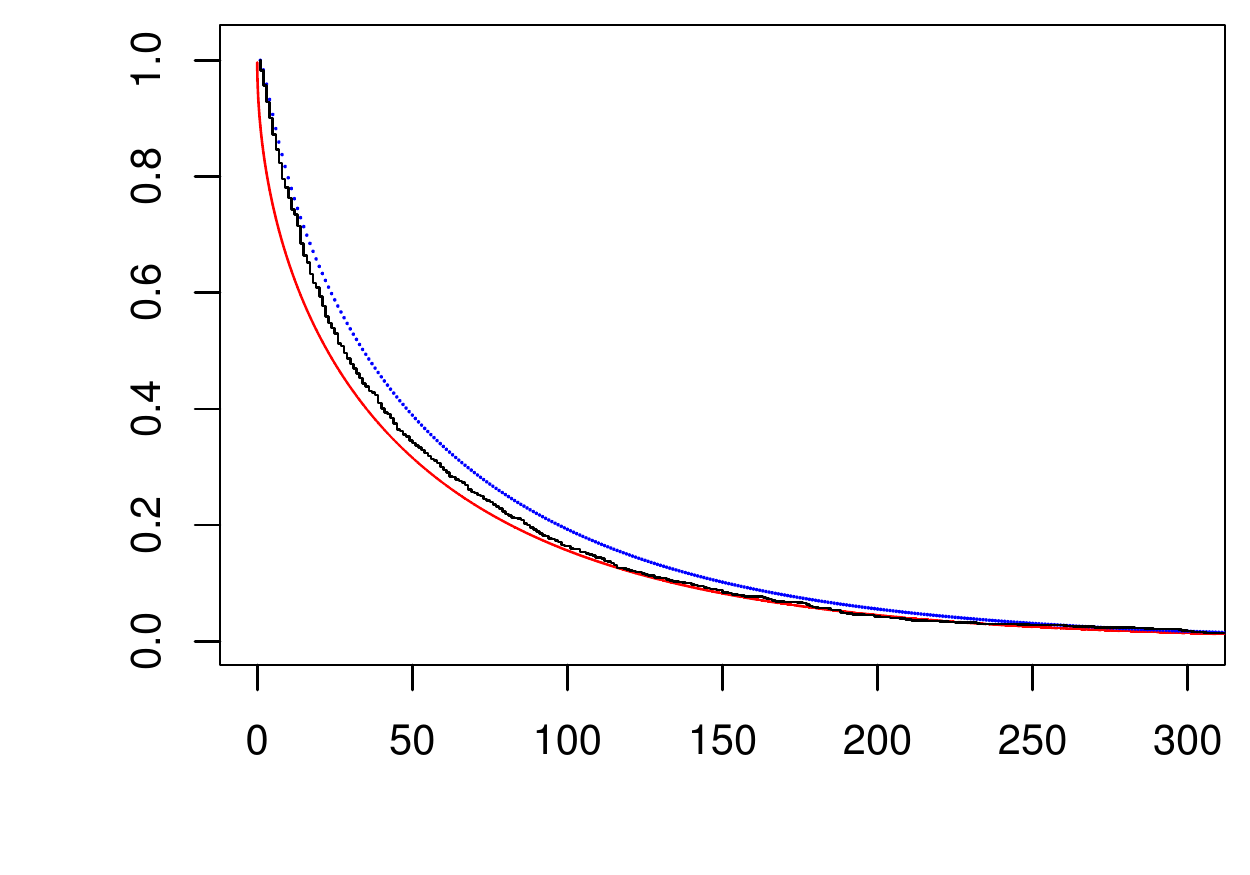}\vspace{-2.3pc}
\put(-90,6){\mbox{\footnotesize$x$}}
\put(-198,85){\mbox{\footnotesize$y$}}
\mbox{}\hspace{1.2pc}
\includegraphics[width=.47\textwidth]{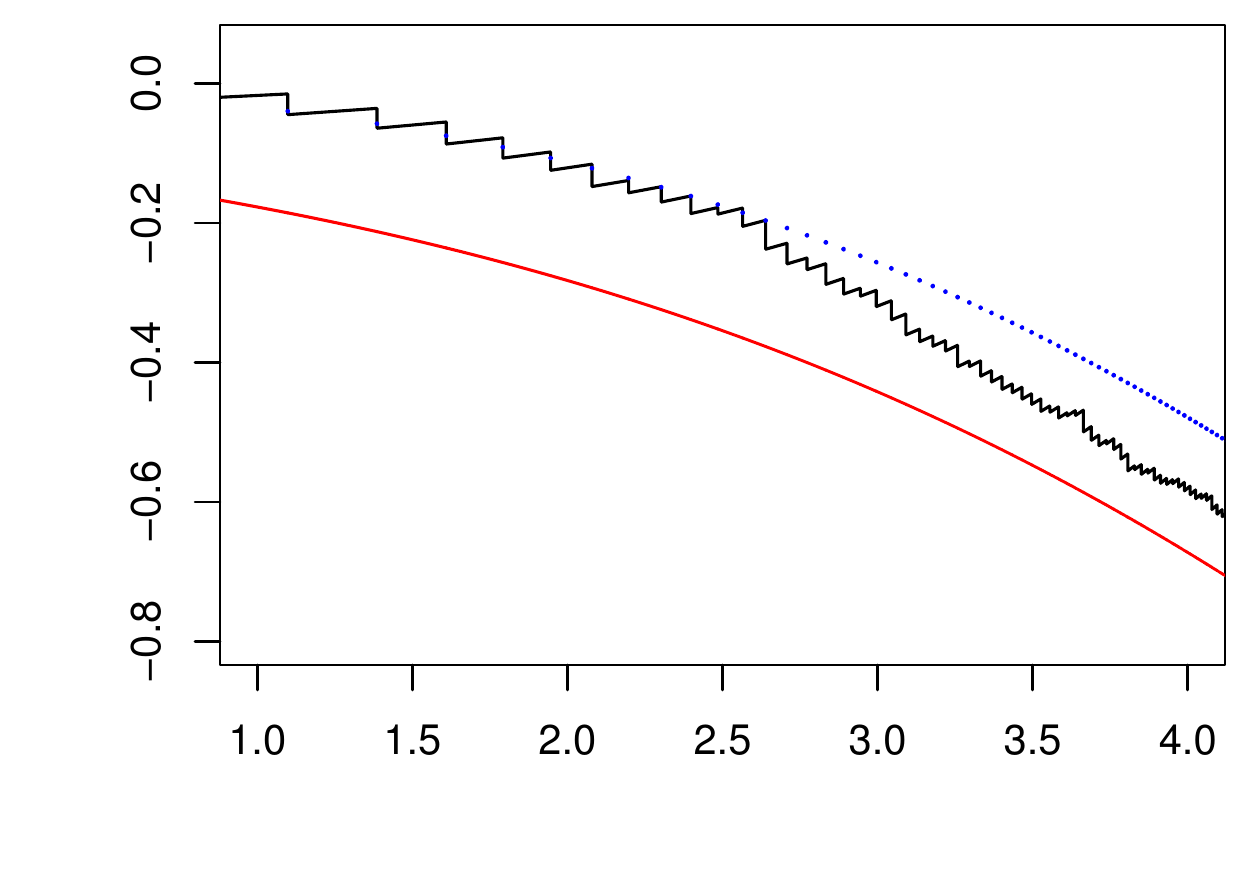}
\put(-90,6){\mbox{\footnotesize$u$}}
\put(-198,85){\mbox{\footnotesize$v$}}
}
\\\vspace{-3.1pc}
\subfigure[\raisebox{4.2pc}{(b) $\nu=-0.5$}] {\includegraphics[width=.47\textwidth]{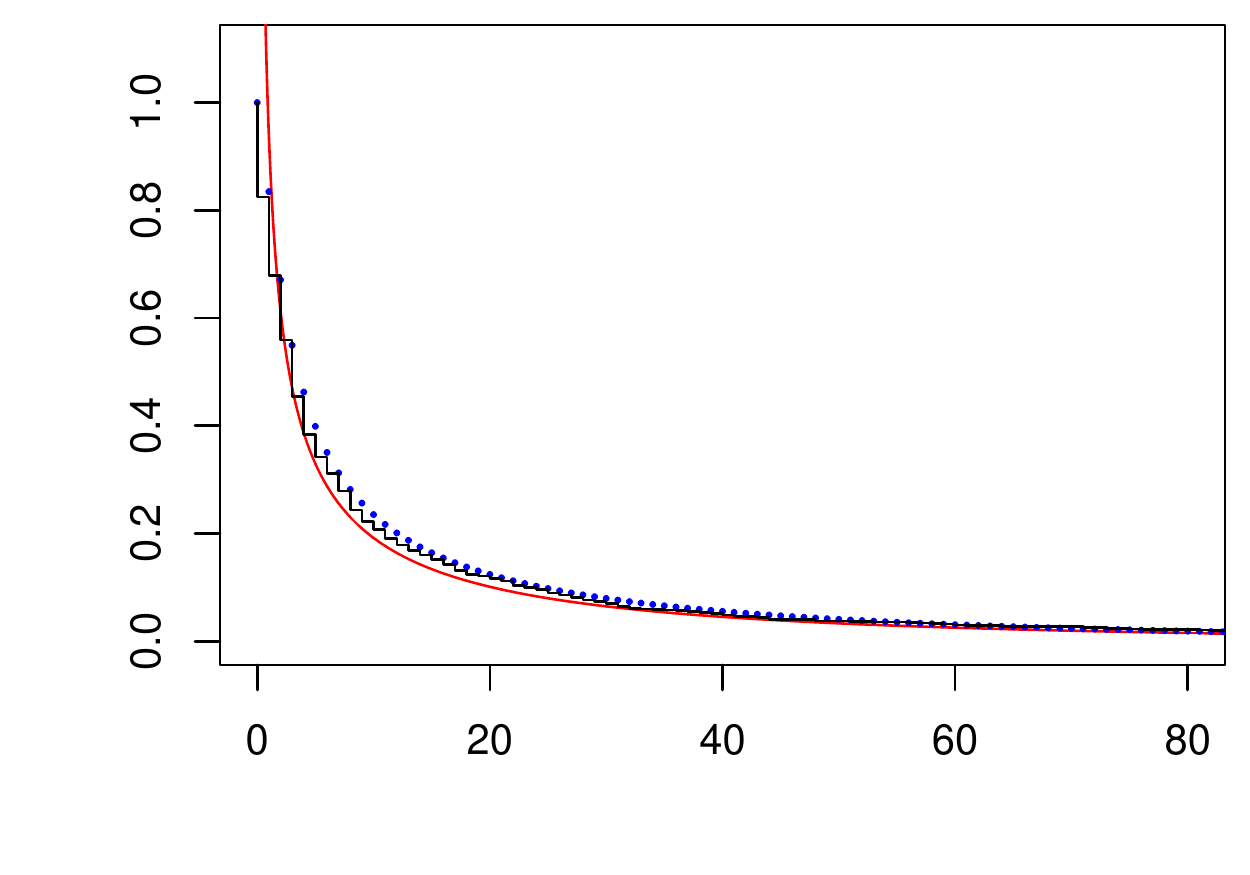}
\put(-90,6){\mbox{\footnotesize$x$}}
\put(-198,83){\mbox{\footnotesize$y$}}
\mbox{}\hspace{1.2pc}
\includegraphics[width=.47\textwidth]{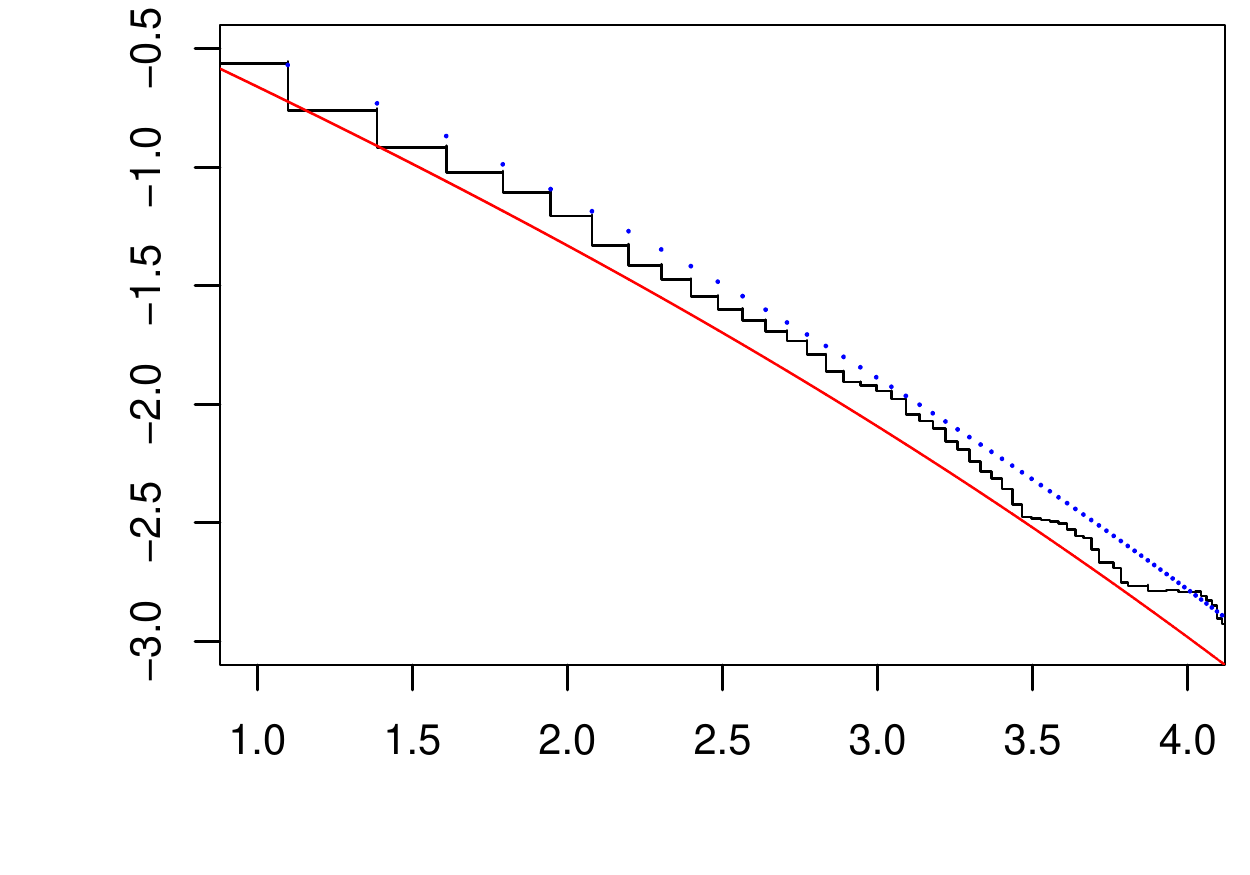}
\put(-90,6){\mbox{\footnotesize$u$}}
\put(-198,83){\mbox{\footnotesize$v$}}
}\vspace{-3.8pc}
\caption{Illustration of the limit shape approximation using $M=1{}\myp000$ random values $(X_i)$ simulated using the GIGP model \eqref{eq:GIGP} with parameters $\theta=0.99$, $\alpha=2$, and (a) $\nu=0.5$ or (b) $\nu=-0.5$.
In the left panels, the black stepwise plots represent the upper boundary $Y(x)$ of the corresponding Young diagrams, together with the GIGP complementary distribution function $\bar{F}(x)$ shown as blue dotted plots, while the smooth red curves represent the scaled back limit shape, $x\mapsto B\,\varphi_\nu(x/A)$. In the right panels, the tails are shown in transformed coordinates \eqref{eq:uv}, with the same line and color coding.}
    \label{GIGPsample}
\end{figure}
The inspection of the tail behavior is facilitated by observing from \eqref{eq:phi_nu}  that
\begin{align*}
\varphi_\nu(x&)=-\int_x^\infty\! s^{\nu-1}\mypp \rmd\myp{(\rme^{-s})}\\&=x^{\nu-1}\,\rme^{-x}+(\nu-1)\!\int_x^\infty\! s^{\nu-2}\, \rme^{-s}\,\rmd{s}\sim x^{\nu-1}\,\rme^{-x}\qquad (x\to\infty).
\end{align*}
Therefore, according to \eqref{eq:tildeY} and \eqref{eq:LS0},
it may be expected that, for large enough $x$,
$$
y=Y(A\myp x)\approx B\,\varphi_\nu(x)\approx B\,x^{\nu-1}\mypp \rme^{-x},
$$
or, taking the logarithm,
\begin{equation}\label{eq:logB}
\log Y(A\myp x)+x\approx \log B +(\nu-1)\log x.
\end{equation}
Hence, switching from $(x, y)$ to the new coordinates
\begin{equation}\label{eq:uv}
u=\log x,\qquad v=\log y+x,
\end{equation}
a transformed data plot may be expected to be close to a straight line with slope $\nu-1$, as well as the tails of the theoretical GIGP distribution function and of the limit shape alike. This is illustrated for the simulated data in Fig.\,\ref{GIGPsample} (right panels), showing a reasonable linearization of the long tails in both cases, $\nu=0.5$ and $\nu=-0.5$.

The graphical method described above can be used for a quick visual check of suitability of the GIGP frequency model even before estimating the model parameters, by first experimenting with the scaling coefficient $A=-1/\log \theta$ (see \eqref{eq:A}) aiming to get a linearized data plot (thus producing a crude estimate for the parameter $\theta$), followed by reading off the fitted slope (which estimates the parameter $\nu-1$), and then exploiting the fitted intercept (close to $\log B$, see \eqref{eq:logB}) to get an estimate for the parameter $\alpha$ using one of the formulas \eqref{eq:B1} to \eqref{eq:B4}. We will apply this method to some real data sets in Section \ref{sec:7}.

\subsection{Convergence of expected Young diagrams}\label{sec:4.2}

We start our proof of Theorem \ref{th:main} by showing that convergence to the limit shape $\varphi_\nu(x)$ holds for the expected Young diagrams. From \eqref{eq:Y-exp_var} and \eqref{eq:Y-tilde}, we have
\begin{equation}
\label{eq:expY}
\EE\bigl(\widetilde{Y}(x)\bigr) = \frac{M\myp\bar{F}(A\myp x)}{B}.
\end{equation}

\begin{theorem}\label{th:conv_exp}
Under Assumptions \ref{as:theta} and \ref{as:AB},
\begin{equation}\label{eq:E-phi}
\EE\bigl(\widetilde{Y}(x)\bigr)\to \varphi_\nu(x) \qquad(x>0),
\end{equation}
uniformly in $x\ge \delta$ for any $\delta>0$.
\end{theorem}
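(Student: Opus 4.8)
The plan is to work directly with the explicit formula $\EE\bigl(\widetilde{Y}(x)\bigr)=M\bar{F}(A\myp x)/B=(M/B)\sum_{j\ge Ax}f_j$ from \eqref{eq:expY}, and to show that the tail sum $\sum_{j\ge Ax}f_j$ behaves, uniformly in $x\ge\delta$, like $c_\theta\myp A^\nu\myp\varphi_\nu(x)$ for a suitable $\theta$-dependent constant $c_\theta$, after which it remains only to verify that $M\myp c_\theta\myp A^\nu/B\to1$ in each of the parameter regimes of Assumption~\ref{as:AB}. The argument splits into three stages: (i) replace each $f_j$ in the tail sum by its leading power-geometric asymptotics; (ii) identify the resulting sum as a Riemann sum for the incomplete gamma integral \eqref{eq:phi_nu}; (iii) evaluate the normalizing constant. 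Note that, consistently with the statement, only Assumptions~\ref{as:theta} and~\ref{as:AB} are used; in particular $A=-1/\log\theta\sim(1-\theta)^{-1}\to\infty$ by \eqref{eq:A}, but $B\to\infty$ is not needed here.

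For stage~(i), since $\alpha$ is fixed, the large-order Bessel asymptotics recorded in Appendix~\ref{sec:A} together with Stirling's formula give $K_{\nu+j}(\alpha)=\tfrac12\myp\Gamma(\nu+j)\bigl(\tfrac12\myp\alpha\bigr)^{-\nu-j}(1+o(1))$ and $\Gamma(\nu+j)/\Gamma(j+1)=j^{\nu-1}(1+O(1/j))$, so from \eqref{eq:GIGP},
\[
f_j=c_\theta\myp j^{\nu-1}\myp\theta^j\bigl(1+\rho_j\bigr),\qquad c_\theta:=\frac{(1-\theta)^{\nu/2}\bigl(\tfrac12\myp\alpha\bigr)^{-\nu}}{2\myp K_\nu\bigl(\alpha(1-\theta)^{1/2}\bigr)},
\]
where $\rho_j=o(1)$ as $j\to\infty$ with an error bound depending on $j$ and the fixed $\alpha$ only, not on $\theta$; this is the content underlying \eqref{eq:Kj1}. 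Given $\varepsilon'>0$, pick $j_0$ with $|\rho_j|\le\varepsilon'$ for $j\ge j_0$; since $A\to\infty$ we have $A\delta\ge j_0$ once $\theta$ is close enough to $1$, so every index in $\sum_{j\ge Ax}$ with $x\ge\delta$ satisfies $|\rho_j|\le\varepsilon'$, whence $\bar{F}(A\myp x)=c_\theta\myp(1+O(\varepsilon'))\sum_{j\ge Ax}j^{\nu-1}\theta^j$ uniformly in $x\ge\delta$. The case $\alpha=0$ is handled identically after replacing $f_j$ by the zero-truncated frequencies $\check{f}_j$ and $c_\theta$ by the corresponding constant read off from \eqref{eq:Kj2}, \eqref{eq:f02} or \eqref{eq:Kj3}. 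For stage~(ii), the choice $A=-1/\log\theta$ makes $\theta^j=\rme^{-j/A}$, so $\sum_{j\ge Ax}j^{\nu-1}\theta^j=\sum_{j\ge Ax}h_A(j)$ with $h_A(t)=t^{\nu-1}\rme^{-t/A}$, and the substitution $t=A\myp s$ gives $\int_{Ax}^\infty h_A(t)\,\rmd{t}=A^\nu\varphi_\nu(x)$. As $h_A$ is eventually monotone and of bounded variation on $[A\myp x,\infty)$, with total variation at most a constant multiple of $\max_{t\ge Ax}h_A(t)=O(A^{\nu-1})$ uniformly for $x\ge\delta$, the sum differs from the integral by $O(A^{\nu-1})$, so $\sum_{j\ge Ax}j^{\nu-1}\theta^j=A^\nu\varphi_\nu(x)\bigl(1+r_A(x)\bigr)$ with
\[
\sup_{x\ge\delta}|r_A(x)|=O\!\left(\frac1A\,\sup_{x\ge\delta}\frac{x^{\nu-1}\rme^{-x}}{\varphi_\nu(x)}\right)=O\!\left(\frac1A\right)\to0,
\]
the last supremum being finite because $x\mapsto x^{\nu-1}\rme^{-x}/\varphi_\nu(x)$ is continuous and positive on $[\delta,\infty)$ and tends to $1$ as $x\to\infty$ (integrate \eqref{eq:phi_nu} by parts).

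For stage~(iii), combining the two stages yields $\EE\bigl(\widetilde{Y}(x)\bigr)=\bigl(M\myp c_\theta\myp A^\nu/B\bigr)\myp\varphi_\nu(x)\bigl(1+O(\varepsilon')+o(1)\bigr)$ uniformly in $x\ge\delta$, so it suffices to show $M\myp c_\theta\myp A^\nu/B\to1$. Using Lemma~\ref{lm:K} to evaluate $K_\nu\bigl(\alpha(1-\theta)^{1/2}\bigr)$ and $A\sim(1-\theta)^{-1}$, one finds $c_\theta\myp A^\nu\sim\Gamma(\nu)^{-1}$ for $\nu>0$; $c_\theta\myp A^\nu\sim\bigl(-\log(1-\theta)\bigr)^{-1}$ for $\nu=0$; $c_\theta\myp A^\nu\sim\bigl(\tfrac12\myp\alpha\bigr)^{-2\nu}(1-\theta)^{-\nu}/\Gamma(-\nu)$ for $-1\le\nu<0$ with $\alpha>0$; and $c_\theta\myp A^\nu\sim(-\nu)(1-\theta)^{-\nu}/\Gamma(\nu+1)$ for $-1<\nu<0$ with $\alpha=0$. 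In each of the four regimes this is exactly $B/M$ as prescribed in Assumption~\ref{as:AB}(a)--(d)\,---\,those coefficients were chosen precisely so that this cancellation occurs\,---\,so $M\myp c_\theta\myp A^\nu/B\to1$. Letting $\varepsilon'\downarrow0$ after $\theta\to1-$ then gives $\sup_{x\ge\delta}\bigl|\EE\bigl(\widetilde{Y}(x)\bigr)-\varphi_\nu(x)\bigr|\to0$, as required.

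I expect the main technical obstacle to be the uniformity over the unbounded range $x\ge\delta$: one must control the two limits $j\to\infty$ and $\theta\to1-$ simultaneously in the tail expansion of $f_j$ (which is why it is essential that the Bessel/Stirling error $\rho_j$ is uniform in $\theta$), and, in the Riemann-sum step, one needs a \emph{relative} error that stays small even where $\varphi_\nu(x)\to0$, which is precisely what the finiteness of $\sup_{x\ge\delta}x^{\nu-1}\rme^{-x}/\varphi_\nu(x)$ delivers.
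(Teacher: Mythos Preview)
Your proof is correct, and its computational core\,---\,the tail expansion \eqref{eq:Kj1} (including your careful observation that the error term $\rho_j$ is independent of $\theta$), the Riemann-sum identification, and the normalization via Lemma~\ref{lm:K}\,---\,matches the paper's. The difference lies in how uniformity over $x\ge\delta$ is secured. You establish it directly by bounding the sum--integral discrepancy pointwise and controlling the \emph{relative} error through the finiteness of $\sup_{x\ge\delta}x^{\nu-1}\rme^{-x}/\varphi_\nu(x)$, which you rightly identify as the crux. The paper instead sidesteps this entirely via Lemma~\ref{lm:uniform}: since $x\mapsto\EE\bigl(\widetilde{Y}(x)\bigr)=M\bar{F}(Ax)/B$ is monotone decreasing and the pointwise limit $\varphi_\nu$ is continuous, pointwise convergence on $(0,\infty)$ upgrades automatically to uniform convergence on each $[\delta,\infty)$, so it suffices to treat a single fixed $x$ with no error tracking in the Riemann sum. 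The paper's route is shorter and dissolves what you flag as the main obstacle; yours, in exchange, yields an explicit rate $O(1/A)$ for the Riemann-sum step that the soft monotonicity argument does not provide.
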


\begin{remark}
Note that Assumption \ref{as:B} is not needed in Theorem \ref{th:conv_exp}.
    \end{remark}
The following useful criterion for  uniform convergence of monotone functions (adapted to the half-line domain) is well known (see, e.g., \cite[Sec.\,0.1]{Resnick}).
\begin{lemma}\label{lm:uniform}
Let a sequence of monotone functions on $(0,\infty)$, uniformly bounded
on $[\myp\delta,\infty)$ for any $\delta>0$, converge pointwise to a continuous (monotone) function. Then this convergence is uniform on $[\myp\delta,\infty)$, for any $\delta>0$.
\end{lemma}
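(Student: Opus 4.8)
The plan is to adapt the classical P\'olya--Dini sandwiching argument for monotone functions to the half-line, the crux being the control of the unbounded tail as $x\to\infty$. Write $f_n$ for the given monotone functions and $f$ for the continuous monotone limit; replacing $f_n$ by $-f_n$ if necessary (which preserves monotone convergence, continuity of the limit, and uniform boundedness), I may assume all $f_n$, and hence $f$, are non-increasing. Since each $f_n$ and $f$ is non-increasing and bounded below on $[\delta,\infty)$ by uniform boundedness, the limit $L:=\lim_{x\to\infty}f(x)=\inf_{x>0}f(x)$ exists and is finite.

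Fix $\delta>0$ and $\varepsilon>0$, and build a finite grid. Using continuity of $f$ together with the intermediate value theorem and the existence of $L$, I would choose points $\delta=x_0<x_1<\dots<x_m<x_{m+1}=\infty$ (with the convention $f(x_{m+1})=L$) such that $f(x_i)-f(x_{i+1})<\varepsilon$ for every $i=0,\dots,m$; that is, the grid splits the variation of $f$ over $[\delta,\infty)$ into steps smaller than $\varepsilon$, with a final step $f(x_m)-L<\varepsilon$. Because there are only finitely many grid points $x_0,\dots,x_m$, pointwise convergence yields an index $N$ with $|f_n(x_i)-f(x_i)|<\varepsilon$ for all $n\ge N$ and all $i=0,\dots,m$. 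For $x$ in a bounded block $[x_{i-1},x_i]$ with $1\le i\le m$, monotonicity gives $f_n(x_i)\le f_n(x)\le f_n(x_{i-1})$, and combining this with the grid estimate and with $f(x_i)\le f(x)\le f(x_{i-1})$ yields $|f_n(x)-f(x)|<2\varepsilon$ for $n\ge N$. This already establishes uniform convergence on every compact interval $[\delta,K]$.

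The main obstacle is the tail block $[x_m,\infty)$, which is non-compact and where a naive argument breaks down. The upper estimate is immediate: for $x\ge x_m$, monotonicity gives $f_n(x)\le f_n(x_m)<f(x_m)+\varepsilon<L+2\varepsilon\le f(x)+2\varepsilon$. The delicate point is the matching lower estimate, because a priori a non-increasing approximant could dip well below $L$ for large $x$ without violating pointwise convergence at any individual point. Here I would invoke the structural feature present in our application, namely that the approximants and the limit share the tail value $L$ and the approximants do not fall below it: in the setting of \eqref{eq:E-phi} one has $L=0$ and $\EE(\widetilde{Y}(x))\ge0$, so $f_n(x)\ge L$ on $[x_m,\infty)$. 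Then, since $f(x)\le f(x_m)<L+\varepsilon\le f_n(x)+\varepsilon$, the lower estimate follows and $|f_n(x)-f(x)|<2\varepsilon$ holds on the tail as well.

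Putting the blocks together gives $\sup_{x\ge\delta}|f_n(x)-f(x)|\le 2\varepsilon$ for all $n\ge N$, and letting $\varepsilon\to0$ yields the uniform convergence on $[\delta,\infty)$. A clean way to record why the lower tail is controlled is to observe that any $x$ at which $|f_n(x)-f(x)|$ exceeds $\varepsilon$ must lie in a fixed bounded set: by monotonicity and the bound $f_n\ge L$, a large deviation can occur only where $f$ itself is bounded away from $L$, i.e.\ on a compact interval, so the supremum over $[\delta,\infty)$ is governed, up to $o(1)$, by the compact-block estimate already established. I expect the entire difficulty to be concentrated in justifying this tail control; the compact part is a routine monotone squeeze, whereas the passage to the unbounded end is exactly where the uniform boundedness hypothesis (ensuring the finite limit $L$) and the non-negativity of the approximants in the application do the essential work.
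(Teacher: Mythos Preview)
The paper does not actually prove this lemma; it simply quotes it as ``well known'' with a reference to Resnick. So there is no proof in the paper to compare against, and your proposal is the only argument on the table.

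Your compact-interval argument is the standard P\'olya--Dini squeeze and is correct. More importantly, you have put your finger on a real issue: as literally stated, the lemma is \emph{false} on the unbounded tail. A clean counterexample is $f_n(x)=-\mathbf{1}_{[n,\infty)}(x)$ on $(0,\infty)$: each $f_n$ is non-increasing, the family is uniformly bounded by $0$ and $-1$, and $f_n\to f\equiv 0$ pointwise with $f$ continuous and monotone; yet $\sup_{x\ge\delta}|f_n(x)-f(x)|=1$ for all $n\ge\delta$. So some extra tail hypothesis is needed---typically that the approximants and the limit share the same value at $+\infty$, which is exactly what holds in the classical distribution-function setting that Resnick treats.

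Your fix is the right one for this paper: in the only place the lemma is invoked (Theorem~\ref{th:conv_exp}), the approximants $\EE\bigl(\widetilde{Y}(x)\bigr)=M\bar F(Ax)/B$ are non-negative and vanish at infinity, as does the limit $\varphi_\nu$, so $f_n\ge L=0$ and your lower tail estimate goes through. It would be cleanest to state the lemma with the additional hypothesis that $\lim_{x\to\infty} f_n(x)$ exists and equals $\lim_{x\to\infty} f(x)$ (or simply that $f_n\ge L$ in the non-increasing case), and then your proof is complete and self-contained.
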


Noting that the limiting function $\varphi_\nu(x)$ in \eqref{eq:phi_nu} is continuous and monotone decreasing on $(0,\infty)$, by Lemma \ref{lm:uniform} it suffices to prove pointwise convergence \eqref{eq:E-phi}, for each $x>0$.

\begin{remark}
In calculations below, we confine ourselves to the leading asymptotics \eqref{eq:Kj1} of terms in the series $\bar{F}(A\myp x)$ (see \eqref{eq:expY}). A more careful analysis involving control over the approximation errors is straightforward by using the classic Euler--Maclaurin summation formula
\cite[\S\mypp2.10(i)]{NIST}
and uniform asymptotic expansions of the Bessel function of large order \cite[\S\myp10.41(ii)]{NIST}.
\end{remark}
\begin{proof}[Proof of Theorem \ref{th:conv_exp}]
The proof below is broken down according to various sub-domains of the parameters $\nu$ and $\alpha$ (see Assumption~\ref{as:AB}). First, we consider the cases with $\alpha>0$, where the GIGP distribution is supported on $j\in\NN_0$, and then switch to the boundary cases with $\alpha=0$, where the support is reduced to $j\in\NN$.

\begin{itemize}
\item $\alpha>0$

Using the asymptotic approximation \eqref{eq:Kj1} of the frequencies $f_j$ (with $j\ge A\myp x\ge A\myp\delta\gg1$), from \eqref{eq:expY} we obtain
\begin{equation}
 \label{eq:LS2}
\EE\bigl(\widetilde{Y}(x)\bigr)
=\frac{M}{B}\sum_{j\ge Ax} f_j\sim \frac{M\left(1-\theta\right)^{\nu/2}\bigl(\tfrac12\myp\alpha\bigr)^{-\nu}}{2 B\myp
K_\nu\bigl(\alpha\left(1-\theta\right)^{1/2}\bigr)}\sum_{j\ge Ax} j^{\nu-1} \myp\theta^j. \end{equation}
Recalling that $A=\left(-\log\theta\right)^{-1}\sim\left(1-\theta\right)^{-1}$, for the last sum in \eqref{eq:LS2} we have
\begin{equation}
\label{eq:LS2a}
A^{-\nu} \!\sum_{j\ge Ax} j^{\nu-1} \myp\theta^j=\sum_{j\ge Ax}\left(\frac{j}{A}\right)^{\nu-1}\mynn \rme^{-j/A}\,\frac{1}{A}\to \int_x^\infty \! s^{\nu-1}\mypp\rme^{-s}\,\rmd{s}=\varphi_\nu(x),
\end{equation}
which is evident by interpreting \eqref{eq:LS2a} as the Riemann integral sum converging to the integral on the right.
Furthermore, the asymptotics of the denominator in \eqref{eq:LS2} is obtained from formulas \eqref{eq:KK} (see Lemma \ref{lm:K}).
Hence, returning to
\eqref{eq:LS2} and recalling the definitions \eqref{eq:A} of $A$ and \eqref{eq:B1}, \eqref{eq:B2}, \eqref{eq:B3} of $B$, we easily obtain \eqref{eq:E-phi}.

\item $\alpha=0$

Using the tail approximations \eqref{eq:Kj2} ($\nu>0$), \eqref{eq:f02} ($\nu=0$) and \eqref{eq:Kj3} ($-1<\nu<0$), from \eqref{eq:expY} we obtain, similarly to \eqref{eq:LS2} and \eqref{eq:LS2a},
\begin{equation}
 \label{eq:LS3}
\EE\bigl(\widetilde{Y}(x)\bigr)
\sim \frac{M\myp C_\nu(\theta)}{B}
\sum_{j\ge Ax} j^{\nu-1} \myp\theta^j\sim  \frac{M\myp C_\nu(\theta)\myp A^\nu}{B}\,\varphi_\nu(x), \end{equation}
where $A\sim \left(1-\theta\right)^{-1}$ (see \eqref{eq:A}) and
$$
C_\nu(\theta):=\begin{cases}
 \displaystyle(1-\theta)^\nu\mynn/\myp\Gamma(\nu)&(\nu>0),\\[.3pc]
(-\log\myn(1-\theta))^{-1}&(\nu=0),\\[.3pc]
\displaystyle (-\nu)/\myp\Gamma(\nu+1)&(-1<\nu<0).\end{cases}
$$
Now, using the specifications \eqref{eq:B1}, \eqref{eq:B2}, or \eqref{eq:B4}, it is immediate to see that the right-hand side of \eqref{eq:LS3} is reduced to $\varphi_\nu(x)$.
\end{itemize}
This completes the proof of Theorem \ref{th:conv_exp}.
\end{proof}

\subsection{Pointwise convergence of random Young diagrams}\label{sec:4.3}

Before addressing a stronger Theorem \ref{th:main} stating the uniform convergence in probability,
we start with a simpler statement about pointwise convergence of $\widetilde{Y}(x)$ for any $x>0$.

\begin{theorem}\label{th:conv_Y}
Under Assumptions \ref{as:theta}, \ref{as:AB} and \ref{as:B}, the mean squared deviation of\/ $\widetilde{Y}(x)$ from the limit shape $\varphi_\nu(x)$  is asymptotically small,
\begin{equation}\label{eq:Esq}
\EE\bigl(\bigl|\widetilde{Y}(x)-\varphi_\nu(x)\bigr|^2\bigr)\to0,
\end{equation}
 uniformly in $x\ge \delta$ for any $\delta>0$. This implies convergence in probability, $\widetilde{Y}(x)\stackrel{{\rm p}\myp}{\to}\varphi_\nu(x)$, that is, for any $\varepsilon>0$,
\begin{equation}\label{eq:Psq}
\sup_{x\ge\delta}\mypp\PP\bigl(\bigl|\widetilde{Y}(x)-\varphi_\nu(x)\bigr|\ge \varepsilon)\to0.
\end{equation}
\end{theorem}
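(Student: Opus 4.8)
The plan is to establish \eqref{eq:Esq} via the standard bias--variance decomposition and then deduce \eqref{eq:Psq} by Chebyshev's inequality. Writing
\begin{equation*}
\EE\bigl(\bigl|\widetilde{Y}(x)-\varphi_\nu(x)\bigr|^2\bigr)
=\Var\bigl(\widetilde{Y}(x)\bigr)+\bigl(\EE\bigl(\widetilde{Y}(x)\bigr)-\varphi_\nu(x)\bigr)^2,
\end{equation*}
the second (squared bias) term is already controlled: by Theorem \ref{th:conv_exp}, $\EE(\widetilde{Y}(x))\to\varphi_\nu(x)$ uniformly on $[\delta,\infty)$, so its square tends to $0$ uniformly there as well. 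Hence the whole matter reduces to showing that the variance term vanishes uniformly on $[\delta,\infty)$.

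For the variance, I would start from \eqref{eq:Y-exp_var} and \eqref{eq:Y-tilde}, which give
\begin{equation*}
\Var\bigl(\widetilde{Y}(x)\bigr)=\frac{M\,\bar{F}(Ax)\,F(Ax)}{B^2}
\le\frac{M\,\bar{F}(Ax)}{B^2}=\frac{1}{B}\cdot\frac{M\,\bar{F}(Ax)}{B}
=\frac{1}{B}\,\EE\bigl(\widetilde{Y}(x)\bigr),
\end{equation*}
using $F(Ax)\le 1$ and then \eqref{eq:expY}. Since $\EE(\widetilde{Y}(x))\to\varphi_\nu(x)$ uniformly on $[\delta,\infty)$ and $\varphi_\nu$ is bounded there (indeed $\varphi_\nu(x)\le\varphi_\nu(\delta)<\infty$), the quantity $\EE(\widetilde{Y}(x))$ is uniformly bounded on $[\delta,\infty)$ for all $\theta$ close enough to $1$; therefore $\Var(\widetilde{Y}(x))\le C_\delta/B$ for some constant $C_\delta$ not depending on $x\ge\delta$. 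By Assumption \ref{as:B}, $B\to\infty$, so $\Var(\widetilde{Y}(x))\to 0$ uniformly on $[\delta,\infty)$, which establishes \eqref{eq:Esq}.

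Finally, \eqref{eq:Psq} is immediate from \eqref{eq:Esq} by Chebyshev's (Markov's) inequality: for any $\varepsilon>0$,
\begin{equation*}
\sup_{x\ge\delta}\PP\bigl(\bigl|\widetilde{Y}(x)-\varphi_\nu(x)\bigr|\ge\varepsilon\bigr)
\le\frac{1}{\varepsilon^{2}}\sup_{x\ge\delta}\EE\bigl(\bigl|\widetilde{Y}(x)-\varphi_\nu(x)\bigr|^{2}\bigr)\to0.
\end{equation*}
The argument is essentially routine; the only point requiring a little care is that the uniform boundedness of $\EE(\widetilde{Y}(x))$ on $[\delta,\infty)$ must be invoked to get a variance bound uniform in $x$, and this is exactly what Theorem \ref{th:conv_exp} (together with boundedness of $\varphi_\nu$ away from $0$) supplies. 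There is no serious obstacle here; the substance of the proof of Theorem \ref{th:main} lies later, in upgrading this pointwise-in-$x$ control to a uniform-in-$x$ bound on the supremum, which is handled separately via the martingale/Doob inequality approach of Section \ref{sec:4.4} (or the concentration-inequality approach of Section \ref{sec:4.6}).
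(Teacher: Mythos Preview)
Your proof is correct and follows essentially the same approach as the paper: the bias--variance decomposition \eqref{eq:Var+Esq}, the bound $\Var(\widetilde{Y}(x))\le M\bar{F}(Ax)/B^2=B^{-1}\EE(\widetilde{Y}(x))$, uniform control via Theorem~\ref{th:conv_exp}, and Chebyshev's inequality for \eqref{eq:Psq}. Your explicit remark that uniform convergence of $\EE(\widetilde{Y}(x))$ to the bounded function $\varphi_\nu$ yields a uniform bound $C_\delta/B$ on the variance is slightly more detailed than the paper's presentation, but the argument is the same.
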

\begin{proof}
By the standard decomposition of the mean squared deviation, we have
\begin{equation}\label{eq:Var+Esq}
\EE\bigl(\bigl|\widetilde{Y}(x)-\varphi_\nu(x)\bigr|^2\bigr)=\Var\bigl(\widetilde{Y}(x)\bigr)+\bigl(\EE\bigl(\widetilde{Y}(x)\bigr)-\varphi_\nu(x)\bigr)^2.
\end{equation}
Using formulas \eqref{eq:Y-exp_var} and \eqref{eq:Y-tilde}, the variance term in
\eqref{eq:Var+Esq} is estimated as follows,
\begin{align}
\notag
\Var\bigl(\widetilde{Y}(x)\bigr)&=\frac{M\bar{F}(A\myp x)\myp F(A\myp x)}{B^2} \\
&\le \frac{M\bar{F}(A\myp x)}{B^2}\sim \frac{\varphi_\nu(x)}{B}\to0,
\label{eq:to0}
\end{align}
according to \eqref{eq:expY}, \eqref{eq:E-phi}, and also Assumption \ref{as:B}, which guarantees that $B\to\infty$. By Theorem \ref{th:conv_exp}, convergence in \eqref{eq:to0} is uniform on $[\myp\delta,\infty)$, for every $\delta>0$. As for the second term on the right-hand side of \eqref{eq:Var+Esq}, due to Theorem \ref{th:conv_exp} it is asymptotically small, uniformly on every interval $[\myp\delta,\infty)$. Hence,
the limit \eqref{eq:Esq} follows.

Finally, convergence in probability \eqref{eq:Psq} is a standard consequence of \eqref{eq:Esq} due to Chebyshev's inequality \cite[Sec.\,II.6, p.\mypp192]{Shiryaev}.
\end{proof}

\subsection{Auxiliary martingale}\label{sec:4.4}

Recalling the definition \eqref{eq:Zi} and interpreting $Z_i(x)$ as a random process with ``time'' $x\in [\myp0,\infty)$, consider a rescaled process in inverted time $t=1/x$,
\begin{equation}\label{eq:tilde=Z}
\widetilde{Z}_i(t):=\frac{1-Z_i(1/t)}{F(1/t)}=\frac{I_{\{X_i<1/t\}}}{F(1/t)}
=\begin{cases}
1/F(1/t),&0<t<1/X_i,\\
0,& 1/X_i\le t<\infty.
\end{cases}
\end{equation}
Note that, with probability $1$,
$$
\lim_{t\to0+}\widetilde{Z}_i(t)=\frac{I_{\{X_i<\infty\}}}{F(\infty)}=1,
$$
so $\widetilde{Z}_i(t)$ can be extended by continuity to the origin by setting $\widetilde{Z}_i(0):=1$. Clearly,
\begin{align}
\label{eq:ExpW}
\EE\bigl(\widetilde{Z}_i(t)\bigr)&=\frac{\PP(X_i<1/t)}{F(1/t)}=1,\\ \Var\bigl(\widetilde{Z}_i(t)\bigr)&=\frac{F(1/t)\bigl(1-F(1/t)\bigr)}{F(1/t)^2}=\frac{\bar{F}(1/t)}{F(1/t)},
\label{eq:VarW}
\end{align}
using that $\bar{F}(1/t)=1-F(1/t)$.

Let $\mathcal{F}_i(t)=\sigma\{\widetilde{Z}_i(s),\mypp s\le t\}$ ($t\ge 0$) denote the smallest sigma-algebra containing all events $\{Z_i(1/s)=1\}=\{X_i\ge 1/s\}$ with $0\le s\le t$. Consider the product sigma-algebra $\mathcal{F}(t)=\mathcal{F}_1(t)\otimes\dots\otimes \mathcal{F}_M(t)$, and define the random process
\begin{equation}\label{eq:W}
W(t):=\widetilde{Z}_1(t)+\dots+\widetilde{Z}_M(t) -M\qquad (t\ge 0).
\end{equation}
From \eqref{eq:tilde=Z}, it is easy to see that the process $W$ is \emph{c\`adl\`ag} (i.e., its paths are everywhere right-continuous
and have left limits). Furthermore, we have (see \eqref{eq:ExpW} and \eqref{eq:VarW})
\begin{equation}\label{eq:ExpVarW}
\EE\bigl(W(t)\bigr)=0,\qquad \Var\bigl(W(t)\bigr)=\frac{M\myp\bar{F}(1/t)}{F(1/t)},
\end{equation}
since the random variables $\widetilde{Z}_1(t), \dots, \widetilde{Z}_M(t)$ in the sum \eqref{eq:W} are mutually independent.

\begin{lemma}\label{lm:W}
The process $W(t)$ is a martingale with respect to the filtration $\mathcal{F}(t)$  $(t\ge 0)$, that is, for any $t\ge s\ge0$ we have $\EE\bigl(W(t)\mypp|\mypp\mathcal{F}(s)\bigr)=W(s)$, with probability $1$,
\end{lemma}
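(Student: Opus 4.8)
The plan is to verify the martingale property by reducing it to each summand $\widetilde{Z}_i(t)$ being a martingale with respect to its own filtration $\mathcal{F}_i(t)$, and then invoking independence across $i$. First I would observe that, from the explicit two-valued description in \eqref{eq:tilde=Z}, the process $\widetilde{Z}_i$ is a decreasing step function of $t$ that drops from the value $1/F(1/t)$ to $0$ at the (random) time $t=1/X_i$; equivalently, $\{\widetilde{Z}_i(t)>0\}=\{t<1/X_i\}=\{X_i>1/t\}$. Hence conditioning on $\mathcal{F}_i(s)$ for $s\le t$ amounts to knowing whether the jump has already occurred by time $s$ and, if it has, knowing the precise value of $X_i$ (so $\widetilde{Z}_i(t)=0$ as well). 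On the event $\{X_i>1/s\}$, i.e.\ $\{\widetilde{Z}_i(s)>0\}$, the conditional law of $X_i$ is that of $X_i$ restricted to $\{X_i>1/s\}$, renormalized by $\bar F(1/s)$, so that $\PP(X_i>1/t\mid\mathcal{F}_i(s))=\bar F(1/t)/\bar F(1/s)$, while on the complementary event $\widetilde{Z}_i(t)=\widetilde{Z}_i(s)=0$ deterministically.

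Next I would compute $\EE\bigl(\widetilde{Z}_i(t)\mid\mathcal{F}_i(s)\bigr)$ explicitly. On $\{X_i\le 1/s\}$ both sides vanish. On $\{X_i>1/s\}$, since $\widetilde{Z}_i(t)=\mathbf{1}_{\{X_i>1/t\}}/F(1/t)$ and $F=1-\bar F$, we get
\begin{equation*}
\EE\bigl(\widetilde{Z}_i(t)\mid\mathcal{F}_i(s)\bigr)=\frac{1}{F(1/t)}\cdot\frac{\bar F(1/t)}{\bar F(1/s)}.
\end{equation*}
This does not immediately look like $\widetilde{Z}_i(s)=1/F(1/s)$, so I would reconcile it by noting that $\EE(\widetilde{Z}_i(t))=1$ for every $t$ (by \eqref{eq:ExpW}); splitting this expectation over the two events $\{X_i\le 1/s\}$ and $\{X_i>1/s\}$ gives $F(1/s)\cdot 0+\bar F(1/s)\cdot\EE(\widetilde{Z}_i(t)\mid X_i>1/s)=1$, whence $\EE(\widetilde{Z}_i(t)\mid X_i>1/s)=1/\bar F(1/s)$. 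Therefore $\EE\bigl(\widetilde{Z}_i(t)\mid\mathcal{F}_i(s)\bigr)=\mathbf{1}_{\{X_i>1/s\}}/\bar F(1/s)$, which is $\mathcal{F}_i(s)$-measurable but is \emph{not} equal to $\widetilde{Z}_i(s)=\mathbf{1}_{\{X_i>1/s\}}/F(1/s)$. This discrepancy signals that I must be careful about which normalization and which filtration make $\widetilde Z_i$ (and hence $W$) a genuine martingale; the cleanest route is to check directly that $W(t)=\sum_i\widetilde{Z}_i(t)-M$ satisfies $\EE(W(t)\mid\mathcal{F}(s))=W(s)$ by writing $W(t)-W(s)=\sum_i(\widetilde{Z}_i(t)-\widetilde{Z}_i(s))$ and showing each increment has conditional mean zero given $\mathcal{F}(s)$, using that $\mathcal{F}(s)=\mathcal{F}_1(s)\otimes\cdots\otimes\mathcal{F}_M(s)$ together with the mutual independence of the $X_i$ to reduce $\EE(\widetilde{Z}_i(t)-\widetilde{Z}_i(s)\mid\mathcal{F}(s))$ to $\EE(\widetilde{Z}_i(t)-\widetilde{Z}_i(s)\mid\mathcal{F}_i(s))$.

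The main obstacle, then, is the per-coordinate identity $\EE\bigl(\widetilde{Z}_i(t)\mid\mathcal{F}_i(s)\bigr)=\widetilde{Z}_i(s)$: establishing it requires handling the two-valued structure of $\widetilde{Z}_i$ correctly, in particular recognizing that on the event $\{X_i\le 1/s\}$ the full value of $X_i$ is revealed (so the future increment is deterministically $0$), and that on $\{X_i>1/s\}$ the survival-function ratio $\bar F(1/t)/\bar F(1/s)$ combines with the reciprocal-of-$F$ weight in exactly the right way. I would verify this by a short direct computation on each of the two events, being attentive to whether the intended filtration is generated by $\{\widetilde Z_i(s),\,s\le t\}$ as written (which records $X_i$ exactly once the jump has happened) — this is the subtlety the proof must get right. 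Once the single-index martingale identity is in hand, summing over the independent coordinates and subtracting the constant $M$ yields $\EE(W(t)\mid\mathcal{F}(s))=\sum_i\widetilde{Z}_i(s)-M=W(s)$, and together with the integrability $\EE|W(t)|<\infty$ (immediate from $\EE(W(t))=0$ and the finite variance in \eqref{eq:ExpVarW}) and the c\`adl\`ag paths noted before the lemma, this completes the proof.
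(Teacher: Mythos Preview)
Your overall strategy --- reduce to showing each $\widetilde{Z}_i(t)$ is an $\mathcal{F}_i(t)$-martingale and then sum using independence --- is exactly the paper's approach. However, there is a genuine error that derails your computation and manufactures the ``discrepancy'' you report: you write $\{\widetilde{Z}_i(t)>0\}=\{t<1/X_i\}=\{X_i>1/t\}$, but in fact $t<1/X_i \Longleftrightarrow X_i<1/t$, so $\widetilde{Z}_i(t)=\mathbf{1}_{\{X_i<1/t\}}/F(1/t)$, not $\mathbf{1}_{\{X_i>1/t\}}/F(1/t)$ as you later use. Every subsequent conditional calculation inherits this reversal, which is why you obtain $1/\bar F(1/s)$ instead of $1/F(1/s)$ and conclude (incorrectly) that the per-coordinate martingale identity fails.

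With the inequality corrected, the two-case argument is short and clean, exactly as in the paper. On $\{X_i\ge 1/s\}$ (so $\widetilde Z_i(s)=0$) one has $X_i\ge 1/s\ge 1/t$, hence $\widetilde Z_i(t)=0$ deterministically and the conditional expectation is $0=\widetilde Z_i(s)$. On $\{X_i<1/s\}$ (so $\widetilde Z_i(s)=1/F(1/s)$), the trace of $\mathcal{F}_i(s)$ on this event is trivial, and since $\{X_i<1/t\}\subset\{X_i<1/s\}$,
\[
\EE\bigl(\widetilde Z_i(t)\,\big|\,X_i<1/s\bigr)=\frac{\PP(X_i<1/t)}{F(1/t)\,\PP(X_i<1/s)}=\frac{1}{F(1/s)}=\widetilde Z_i(s).
\]
The filtration subtlety you flag dissolves under the correct sign: on $\{X_i\ge 1/s\}$ the future is deterministically zero regardless of whether $X_i$ is fully revealed, while on $\{X_i<1/s\}$ nothing beyond the event itself is known. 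Summing over $i$ then yields the lemma.
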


\begin{proof}
Let $t\ge s\ge0$. Due to definition \eqref{eq:W} and independence of $(X_i)$, we have
\begin{equation}\label{eq:EW}
\EE\bigl(W(t)\mypp|\mypp\mathcal{F}(s)\bigr)=\sum_{i=1}^M \EE\bigl(\widetilde{Z}_i(t)\mypp|\mypp\mathcal{F}_i(s)\bigr)-M,
\end{equation}
so it suffices to prove that
\begin{equation}\label{eq:EZi}
\EE\bigl(\widetilde{Z}_i(t)\mypp|\mypp\mathcal{F}_i(s)\bigr)=\widetilde{Z}_i(s).
\end{equation}

By definition, information contained in the sigma-algebra $\mathcal{F}_i(s)$ relates to the threshold events of the form $\{X_i\ge 1/r\}$ for all $0\le r\le s$. In view of the target random variable in \eqref{eq:EZi} expressed through $Z_i(1/t)=I_{\{X_i\ge 1/t\}}$, this essentially amounts to the knowledge of whether $X_i\ge 1/s$ or not.

Suppose first that $X_i<1/s$, that is, $\widetilde{Z}_i(s)=1/F(1/s)$ (see \eqref{eq:tilde=Z}).
Then
\begin{align*}
\EE\bigl(\widetilde{Z}_i(t)\mypp|\mypp X_i<1/s\bigr)&=\frac{\PP(X_i<1/t\,|\mypp X_i<1/s)}{F(1/t)}\\
&=\frac{\PP(X_i<1/t)}{F(1/t)\,\PP(X_i<1/s)}\\
&=\frac{1}{F(1/s)}=\widetilde{Z}_i(s).
\end{align*}

Similarly, if $X_i\ge 1/s\ge 1/t$ then  $\widetilde{Z}_i(s)=0$ and
\begin{align*}
\EE\bigl(\widetilde{Z}_i(t)\mypp|\mypp X_i\ge 1/s\bigr)&=\frac{\PP(X_i<1/t\,|\mypp X_i\ge 1/s)}{F(1/t)}\\
&=\frac{\PP(X_i<1/t,  X_i\ge 1/s)}{F(1/t)\,\PP(X_i\ge 1/s)}\\
&=0=\widetilde{Z}_i(s).
\end{align*}

Thus, the relation \eqref{eq:EZi} holds true, and in view of \eqref{eq:EW} and the definition \eqref{eq:W} the proof of Lemma \ref{lm:W} is complete.
\end{proof}

The martingale $W(t)$ will be used in the next section for the estimation of the uniform distance between the functions $\widetilde{Y}(x)$ and $\varphi_\nu(x)$.

\subsection{Uniform convergence of random Young diagrams}\label{sec:4.5}

Pointwise convergence established in  Theorem \ref{th:conv_Y} can be strengthened to the uniform convergence away from the origin, yielding our main result stated above as Theorem \ref{th:main}.

\begin{proof}[Proof of Theorem \ref{th:main}]
Note that
$$
\sup_{x\ge\delta}\myn\bigl|\widetilde{Y}(x)-\varphi_\nu(x)\bigr|\le \sup_{x\ge\delta}\myn\bigl|\widetilde{Y}(x)-\EE\bigl(\widetilde{Y}(x)\bigr)\bigr|+
\sup_{x\ge\delta}\myn\bigl|\EE\bigl(\widetilde{Y}(x)-\varphi_\mu(x)\bigr)\bigr|,
$$
where $\EE\bigl(\widetilde{Y}(x)\bigr)=M \bar{F}(A\myp x)/B$ (see \eqref{eq:expY}).  Hence, by virtue of Theorem \ref {th:conv_exp}, it suffices to consider the deviations
$$
\sup_{x\ge\delta}\mynn\left|\widetilde{Y}(x)-\frac{M \bar{F}(A\myp x)}{B}\right|\ge \varepsilon.
$$

Using the definitions \eqref{eq:tilde=Z}, \eqref{eq:W} and the relation $\bar{F}(x)=1-F(x)$, we have
\begin{align*}
\widetilde{Y}(x)-\frac{M \bar{F}(A\myp x)}{B}&=\frac{1}{B}\left(\sum_{i=1}^M Z_i(A\myp x)-M\bar{F}(A\myp x)\right)\\
&=-\frac{1}{B}\left(\sum_{i=1}^M \bigl(1-Z_i(A\myp x)\bigr) -M F(A\myp x)\right)\\
&=-\frac{F(A\myp x)}{B}\left(\sum_{i=1}^M \widetilde{Z}_i(1/A\myp x)-M\right)\\
&=-\frac{F(A\myp x)}{B}\,W(1/A\myp x).
\end{align*}
Since $0\le F(A\myp x)\le1$ and $t:=1/A\myp x\in[\myp0,1/A\myp\delta\myp]$, this implies
$$
\sup_{x\ge \delta}
\myn\left|\widetilde{Y}(x)-\frac{M \bar{F}(A\myp x)}{B}\right|
\le \frac{1}{B}\sup_{t\le 1/A\myp\delta}\bigl|W(t)\bigr|.
$$
Hence, by the Doob--Kolmogorov submartingale inequality (see, e.g.,
\cite[Theorem 6.16, p.\,101]{Yeh})
applied to the martingale $W(t)$ (see Lemma \ref{lm:W}) and using formulas \eqref{eq:ExpVarW}, we obtain
\begin{align}
\notag
\PP\!\left(\sup_{x\ge \delta}
\myn\left|\widetilde{Y}(x)-\frac{M \bar{F}(A\myp x)}{B}\right|\ge \varepsilon\right)
&\le\PP\!\left( \sup_{t\le 1/A\myp\delta}\bigl|W(t)\bigr|\ge B\myp\varepsilon\right)\\
\notag
&\le \frac{\Var\myp\bigl(W(1/A\myp\delta)\bigr)}{B^2\myp\varepsilon^2
}\\[.2pc]
&=\frac{M\myp\bar{F}(A\myp\delta)}{F(A\myp\delta)\mypp B^2\myp\varepsilon^2}\sim \frac{\varphi_\nu(\delta)}{B\mypp\varepsilon^2}\to0,
\label{eq:Var+}
\end{align}
where at the last step we used that $F(A\myp\delta)\to1$ (as $A\to\infty$) and the limit \eqref{eq:to0} (with $x=\delta$). This completes the proof of Theorem \ref{th:main}.
\end{proof}

\subsection{Young boundary as an empirical process}\label{sec:4.6}

The random function $Y(x)$ given by  \eqref{eq:YZ} can be viewed as an \emph{empirical process} \cite[Ch.\,11]{BLM}, determined by an independent random sample $(X_1,\dots,X_M)$  through a family of test functions $(g_x,\,x\ge 0)$, defined by
\begin{equation}\label{eq:gx}
g_x(j):=\mathbf{1}_{[x,\infty)}(j)\qquad (j\in\NN_0).
\end{equation}
Namely, using \eqref{eq:Zi} and \eqref{eq:YZ}, we can write
$$
Y(x)=\sum_{i=1}^M  Z_i(x)=\sum_{i=1}^M \mathbf{1}_{[x,\infty)}(X_i)=\sum_{i=1}^M g_x(X_i).
$$
The advantage of this approach is that there are sharp upper bounds for the variance of the supremum of an empirical process.

Note that
\begin{align}
\notag
\sup_{x\ge \delta}
\myn\left|\widetilde{Y}(x)-\frac{M \bar{F}(A\myp x)}{B}\right| &\le\frac{1}{B}\sup_{x\ge\delta} \sum_{i=1}^M \myn\bigl|g_{Ax}(X_i)-\bar{F}(A\myp x)\bigr|\\
&=\frac{1}{B}\sup_{x\ge\delta} \sum_{i=1}^M \myn\bigl|\tilde{g}_{x}(X_i)\bigr|,
\label{eq:supY}
\end{align}
where
\begin{equation}\label{eq:g-tilde}
\tilde{g}_x(j):=g_{Ax}(j)-\bar{F}(A\myp x)\qquad (j\in\NN_0).
\end{equation}
According to \cite[Theorem 11.1, pp.\,314, 316]{BLM},
\begin{equation}\label{eq:Var<}
\Var\!\left(\sup_{x\ge\delta} \sum_{i=1}^M\mynn \bigl|\tilde{g}_{x}(X_i)\bigr|\right)\le  \sum_{i=1}^M \EE\!\left(\sup_{x\ge\delta}\myn \bigl|\tilde{g}_{x}(X_i)\bigr|^2\right)\!.
\end{equation}
Using \eqref{eq:gx} and \eqref{eq:g-tilde}, we have
\begin{align*}
 \bigl|\tilde{g}_{x}(X_i)\bigr|^2&\le \left(Z_i(A\myp x)+\bar{F}(A\myp x)\right)^2\\[.2pc]
 &=Z_i(A\myp x)^2+2\mypp Z_i(A\myp x)\bar{F}(A\myp x)+\bar{F}(A\myp x)^2\\[.2pc]
 &\le Z_i(A\myp x)+2\mypp Z_i(A\myp x)+\bar{F}(A\myp x),
\end{align*}
hence
$$
\sup_{x\ge\delta}\myn  \bigl|g_{Ax}(X_i)\bigr|^2=3\mypp Z_i(A\myp\delta)+\bar{F}(A\myp\delta)
$$
and (see  \eqref{eq:Var<})
$$
\EE\!\left(\sup_{x\ge\delta}\myn  \bigl|g_{Ax}(X_i)\bigr|^2\right)\le 3\,\EE\bigl(Z_i(A\myp\delta)\bigr)+\bar{F}(A\myp\delta)=4\mypp \bar{F}(A\myp\delta).
$$

Returning to \eqref{eq:supY}, this gives, together with
Chebyshev's inequality,
\begin{align*}
\PP\!\left(\sup_{x\ge\delta}
\myn\left|\widetilde{Y}(x)-\frac{M\bar{F}(A\myp x)}{B}\right|\ge \varepsilon\right)
&\le\PP\!\left(\sup_{x\ge\delta} \sum_{i=1}^M \bigl|\tilde{g}_{x}(X_i)\bigr|\ge B\myp\varepsilon\right)\\
&\le \frac{4\myp M\bar{F}(A\myp\delta)}{B^2\myp\varepsilon^2}\sim \frac{4\mypp \varphi_\nu(\delta)}{B\mypp\varepsilon^2}\to0,
\end{align*}
using that $B\to\infty$.
Thus, this furnishes an alternative proof of Theorem \ref{th:main} (cf.\ \eqref{eq:Var+}).

\section{Fluctuations of random Young diagrams}\label{sec:5}

Recalling that $\widetilde{Y}(x)$ is a (normalized) sum of independent indicators $Z_i(A\myp x)=I_{\{X_i\ge Ax\}}$, $i=1,\dots,M$ (see \eqref{eq:tildeY}), it is natural to expect that $\widetilde{Y}(x)$ is asymptotically normal, with mean $\EE\bigl(\widetilde{Y}(x)\bigr)=M\bar{F}(A\myp x)/B\sim \varphi_\nu(x)$ and variance $M\bar{F}(A\myp x)\myp F(A\myp x)/B^2\sim \varphi_\nu(x)/B$ (see \eqref{eq:expY} and \eqref{eq:E-phi}).  However, a standard central limit theorem is not directly applicable because the ``success'' probability $\PP(Z_i(A\myp x)=1)=\bar{F}(A\myp x)$ is not constant (and, moreover, it tends to~$0$), so we have to re-prove this statement using the method of characteristic functions.

\begin{theorem}\label{th:CLT1}
Under Assumptions \ref{as:theta}, \ref{as:AB} and \ref{as:B}, for any $x>0$,
\begin{equation}\label{eq:CLT1}
\varUpsilon(x) :=\sqrt{\frac{B}{\varphi_\nu(x)}}\left(\widetilde{Y}(x)-\frac{M\bar{F}(A\myp x)}{B}\right) \stackrel{{\rm d}}{\longrightarrow}\mathcal{N}(0,1),
\end{equation}
where $\mathcal{N}(0,1)$ is a standard normal law (i.e., with zero mean and unit variance), and $\stackrel{{\rm d}\mypp}{\to}$ denotes convergence in distribution.
\end{theorem}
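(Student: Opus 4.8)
The plan is to compute the characteristic function of $\varUpsilon(x)$ directly and show that it converges to $\rme^{-t^2/2}$, after which L\'evy's continuity theorem yields the claim. Write $p:=\bar F(A\myp x)$ and $\sigma:=\sqrt{B\myp\varphi_\nu(x)}$, and put $\xi_i:=\sigma^{-1}\bigl(Z_i(A\myp x)-p\bigr)$ for $i=1,\dots,M$; these are i.i.d.\ and centred, and by \eqref{eq:tildeY} one has $\varUpsilon(x)=\sum_{i=1}^M\xi_i$. Since $Z_i(A\myp x)\sim\mathrm{Bernoulli}(p)$,
\begin{equation*}
\EE\bigl(\rme^{\rmi\myp t\myp\varUpsilon(x)}\bigr)=\Bigl(\rme^{-\rmi\myp t\myp p/\sigma}\bigl(1+p\myp(\rme^{\rmi\myp t/\sigma}-1)\bigr)\Bigr)^{M}\qquad(t\in\RR).
\end{equation*}

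The key asymptotic inputs are: by Theorem \ref{th:conv_exp} together with $F(A\myp x)\to1$,
\begin{equation*}
\frac{M\myp p\myp(1-p)}{\sigma^2}=\frac{M\bar F(A\myp x)\myp F(A\myp x)}{B\myp\varphi_\nu(x)}\longrightarrow1,
\end{equation*}
and, since $M\myp p/B\to\varphi_\nu(x)>0$ while $B\to\infty$ by Assumption \ref{as:B}, one has $M\myp p\to\infty$ and hence $\sigma\to\infty$, $p\to0$. Taking logarithms in the product above, substituting $\rme^{\rmi\myp t/\sigma}-1=\rmi\myp t/\sigma-t^2/(2\sigma^2)+O(\sigma^{-3})$ and expanding $\log(1+z)=z-\tfrac12 z^2+O(z^3)$ with $z=p\myp(\rme^{\rmi\myp t/\sigma}-1)\to0$, the $\pm\rmi\myp t\myp p/\sigma$ terms cancel and one is left with
\begin{equation*}
M\log\EE\bigl(\rme^{\rmi\myp t\myp\xi_1}\bigr)=-\frac{t^2\myp M\myp p\myp(1-p)}{2\myp\sigma^2}+M\cdot O\!\left(\frac{p}{\sigma^3}\right)\!.
\end{equation*}
Since $M\myp p/\sigma^3\sim\bigl(B\myp\varphi_\nu(x)\bigr)^{-1/2}\to0$, the remainder is negligible, so $\EE\bigl(\rme^{\rmi\myp t\myp\varUpsilon(x)}\bigr)\to\rme^{-t^2/2}$ for every $t\in\RR$, as required.

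An equivalent route, avoiding the logarithmic expansion, is the Lyapunov central limit theorem for triangular arrays applied to $\varUpsilon(x)=\sum_{i=1}^M\xi_i$: the variances sum to $M\myp p\myp(1-p)/\sigma^2\to1$, while $\sum_{i=1}^M\EE|\xi_i|^3\le M\myp p/\sigma^3\to0$ (using $|Z_i(A\myp x)-p|\le1$, so $\EE|Z_i(A\myp x)-p|^3\le p\myp(1-p)\le p$), which is Lyapunov's condition with exponent $1$; Slutsky's lemma then absorbs the discrepancy between the normalization $\sigma$ and the true standard deviation.

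I do not anticipate a genuine obstacle. The one point needing care is that this is \emph{not} a classical i.i.d.\ central limit theorem, because the ``success'' probability $p=\bar F(A\myp x)$ depends on the parameters and tends to $0$; what makes the Gaussian (rather than a Poisson) approximation the right one is precisely that the expected number of ``large'' sources $M\bar F(A\myp x)\sim B\myp\varphi_\nu(x)$ diverges, which is where Assumption \ref{as:B} enters. The remaining work is the routine bookkeeping of the Taylor remainders, controlled by the growth rates identified above.
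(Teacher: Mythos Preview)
Your proof is correct and follows essentially the same route as the paper: direct computation of the characteristic function of the centred Bernoulli sum, logarithmic and Taylor expansion, and identification of the limit $\rme^{-t^2/2}$ using $M\bar F(A\myp x)/B\to\varphi_\nu(x)$, $F(A\myp x)\to1$, and $B\to\infty$. The Lyapunov alternative you sketch at the end is a valid variant not in the paper, but the primary argument matches the paper's almost line for line.
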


\begin{proof}
Substituting \eqref{eq:tildeY}, the left-hand side of \eqref{eq:CLT1} is rewritten as
\begin{equation}\label{eq:Y*}
\varUpsilon(x)=\frac{1}{\sqrt{B\mypp\varphi_\nu(x)}}\sum_{i=1}^M \bigl(Z_i(A\myp x)-\bar{F}(A\myp x)\bigr).
\end{equation}
The characteristic function of
\eqref{eq:Y*} is given by
\begin{equation}\label{eq:psi}
\psi(t;x):=\EE\bigl(\rme^{\rmi\myp t \myp \varUpsilon(x)}\bigr)=\rme^{-\rmi\myp\tilde{t} \myp M \bar{F}(A\myp x)}\mypp \Bigl(1+\bar{F}(A\myp x)\myp\bigl(\rme^{\rmi\myp \tilde{t}}-1\bigr)\Bigr)^M,
\end{equation}
where
\begin{equation}\label{eq:tilde-t}
\tilde{t}= \frac{t}{\sqrt{B\mypp\varphi_\nu(x)}}\myp, \qquad  t\in\RR.
\end{equation}
Choosing the principal branch of the logarithm function $\CC\setminus\{0\}\ni z\mapsto \log z\in\CC$ (i.e., such that $\log 1=0$), we can rewrite \eqref{eq:psi}
as
\begin{equation}
\label{eq:log-psi}
\log\psi(t;x) =-\rmi\mypp \tilde{t}\mypp M \bar{F}(A\myp x) +M \log\myn(1+w),
\end{equation}
where
\begin{equation}\label{eq:w}
w:=\bar{F}(A\myp x)\myp\bigl(\rme^{\rmi\myp \tilde{t}}-1\bigr).
\end{equation}
Since $A\to\infty$ and $B\to\infty$ (by Assumptions  \ref{as:AB} and \ref{as:B}), we have $\tilde{t}\to0$ and $w\to0$, hence
$$
\log\myn(1+w)=w-\tfrac12\myp w^2+O(|w|^3).
$$
Therefore, Taylor expanding $\rme^{\rmi\myp \tilde{t}}=1+\rmi\myp \tilde{t}-\frac12\myp \tilde{t}^2+O(\tilde{t}^3)$ and substituting \eqref{eq:tilde-t} and \eqref{eq:w}, formula  \eqref{eq:log-psi} is elaborated as follows,
\begin{equation*}
\log\psi(t;x) =-\frac{M\bar{F}(A\myp x)\myp F(A\myp x)\,t^2}{2\myp B\mypp \varphi_\nu(x)}+O\!\left(\frac{M\bar{F}(A\myp x)}{B^{3/2}}\right)\to -\frac{\,t^2\!}{2},
\end{equation*}
using that $M\bar{F}(A\myp x)/B\sim \varphi_\nu(x)$ and $F(A\myp x)\to1$ for any $x>0$. Thus, $\psi(t;x)\to \rme^{-t^2\myn/2}$, which is the characteristic function of the normal distribution $\mathcal{N}(0,1)$, as claimed.
\end{proof}

Similarly, Theorem \ref{th:CLT1} can be extended to the finite-dimensional convergence.
\begin{theorem}\label{th:CLT2}
Under Assumptions \ref{as:theta}, \ref{as:AB} and \ref{as:B}, the random process\/ $(\varUpsilon(x),\myp x>0)$ defined in (\ref{eq:CLT1}) and (\ref{eq:Y*}), converges, in the sense of convergence of finite-dimensional distributions, to a Gaussian random process $(\varXi(x),\myp x>0)$ with zero mean and covariance function
\begin{equation*}
K(x,x'):=\Cov\bigl(\varXi(x),\varXi(x')\bigr)=\sqrt{\frac{\varphi_\nu(x')}{\varphi_\nu(x)}}\qquad (0<x\le x').
\end{equation*}
\end{theorem}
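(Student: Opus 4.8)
The plan is to establish convergence of finite-dimensional distributions via the Cram\'er--Wold device, thereby reducing the statement to a one-dimensional triangular-array central limit theorem of essentially the same type as Theorem~\ref{th:CLT1}. Fix $k\in\NN$, points $0<x_1<\dots<x_k$ and reals $c_1,\dots,c_k$; it suffices to show that $S_M:=\sum_{\ell=1}^k c_\ell\,\varUpsilon(x_\ell)$ converges in distribution to $\mathcal{N}(0,\sigma^2)$ with $\sigma^2:=\sum_{\ell,\myp m=1}^k c_\ell\myp c_m\myp K(x_\ell,x_m)$. By \eqref{eq:Y*}, we may write $S_M=\sum_{i=1}^M \xi_{M,i}$, where
\begin{equation*}
\xi_{M,i}:=\sum_{\ell=1}^k \frac{c_\ell}{\sqrt{B\myp\varphi_\nu(x_\ell)}}\,\bigl(Z_i(A\myp x_\ell)-\bar{F}(A\myp x_\ell)\bigr)\qquad (i=1,\dots,M)
\end{equation*}
are, for each fixed $M$, independent and identically distributed with zero mean (recall that $X_1,\dots,X_M$ are i.i.d.).

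The substantive step is the computation of the limiting variance. Using the covariance of the Bernoulli indicators $Z_i$ recorded in Section~\ref{sec:2.2} (cf.\ \eqref{eq:Cov}), namely $\Cov\bigl(Z_i(A\myp x),Z_i(A\myp x')\bigr)=\bar{F}(A\myp x')\,F(A\myp x)$ for $0<x\le x'$, independence across $i$ gives
\begin{equation*}
\Cov\bigl(\varUpsilon(x),\varUpsilon(x')\bigr)=\frac{M\,\bar{F}(A\myp x')\,F(A\myp x)}{B\,\sqrt{\varphi_\nu(x)\myp\varphi_\nu(x')}}\;\longrightarrow\;\frac{\varphi_\nu(x')}{\sqrt{\varphi_\nu(x)\myp\varphi_\nu(x')}}=\sqrt{\frac{\varphi_\nu(x')}{\varphi_\nu(x)}}=K(x,x'),
\end{equation*}
since $M\,\bar{F}(A\myp x')/B\to\varphi_\nu(x')$ by Theorem~\ref{th:conv_exp} (in every parameter regime \eqref{eq:B1}--\eqref{eq:B4}) and $F(A\myp x)\to1$ because $A\to\infty$. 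Summing the finitely many pairwise contributions yields $\Var(S_M)\to\sigma^2$; in particular $\sigma^2\ge0$ for all choices of $c_\ell$, so $K$ is positive semi-definite and the Gaussian process $\varXi$ is well defined (one may also note $\varXi(x)\stackrel{{\rm d}}{=}\varphi_\nu(x)^{-1/2}\,\beta(\varphi_\nu(x))$ for a standard Brownian motion $\beta$, since $\min(\varphi_\nu(x),\varphi_\nu(x'))=\varphi_\nu(x')$ when $x\le x'$, which ties in with the ``Brownian motion in inverted time'' description).

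It then remains to verify a Lindeberg-type condition for the array $(\xi_{M,i})$. Because $\varphi_\nu$ is positive and continuous on $[x_1,\infty)$ we have $\varphi_\nu(x_\ell)\ge\varphi_\nu(x_k)>0$, and since $|Z_i(A\myp x_\ell)-\bar{F}(A\myp x_\ell)|\le1$ the array is uniformly negligible: $|\xi_{M,i}|\le C_0\,B^{-1/2}$ with $C_0:=\varphi_\nu(x_k)^{-1/2}\sum_\ell|c_\ell|$, which tends to $0$ by Assumption~\ref{as:B}. Hence for any $\varepsilon>0$ the truncated sums $\sum_{i=1}^M\EE\bigl(\xi_{M,i}^2\,\mathbf{1}_{\{|\xi_{M,i}|>\varepsilon\}}\bigr)$ vanish once $M$ is large, the Lindeberg--Feller theorem applies, and $S_M\stackrel{{\rm d}}{\longrightarrow}\mathcal{N}(0,\sigma^2)$; the Cram\'er--Wold theorem then gives the claimed convergence of finite-dimensional distributions. (Equivalently, one may bypass Cram\'er--Wold and expand the joint characteristic function $\EE\exp\bigl(\rmi\sum_\ell t_\ell\varUpsilon(x_\ell)\bigr)$ directly as in the proof of Theorem~\ref{th:CLT1}, the only extra work being the bookkeeping of the mixed terms, which reproduces $K$.) There is no serious obstacle here: the CLT is routine given the $O(B^{-1/2})$ bound, and the only point demanding attention is the covariance limit above — respecting the asymmetry $x\le x'$ so that the answer is $\sqrt{\varphi_\nu(x')/\varphi_\nu(x)}$ rather than its reciprocal, and invoking Theorem~\ref{th:conv_exp} to cover all regimes of $(\nu,\alpha)$.
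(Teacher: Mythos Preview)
Your proposal is correct and follows essentially the same approach as the paper: both identify the covariance limit $\Cov(\varUpsilon(x),\varUpsilon(x'))=M\bar{F}(Ax')F(Ax)/\bigl(B\sqrt{\varphi_\nu(x)\varphi_\nu(x')}\bigr)\to K(x,x')$ via Theorem~\ref{th:conv_exp} and $F(Ax)\to1$, and then invoke a routine multivariate CLT. The paper phrases the latter as a direct expansion of the joint characteristic function (mirroring the proof of Theorem~\ref{th:CLT1}), whereas you package it as Cram\'er--Wold plus Lindeberg--Feller with the uniform bound $|\xi_{M,i}|\le C_0\myp B^{-1/2}$; these are equivalent standard routes and you explicitly note the alternative yourself.
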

\begin{proof}
The proof proceeds along the same lines as in Theorem \ref{th:CLT1} via asymptotic analysis of the multivariate characteristic functions for any finite arrays $0<x_1\le \dots\le x_m$ ($m\in\NN$),
$$
\psi(t_1,\dots,t_m;x_1,\dots,x_m)=\EE\!\left(\rmi\sum_{k=1}^m t_k\myp\varUpsilon(x_k)\right)\qquad(t_1,\dots,t_m\in\RR).
$$
The limiting covariance function is easy to compute: using \eqref{eq:Y*} and \eqref{eq:Cov} we have, for $0<x\le x'$,
\begin{align*}
\Cov\bigl(\varUpsilon(x),\varUpsilon(x')\bigr)&=
\frac{M\bar{F}(A\myp x')\myp F(A\myp x)}{B\mypp\sqrt{\varphi_\nu(x)\mypp\varphi_\nu(x')}}\to \sqrt{\frac{\varphi_\nu(x')}{\varphi_\nu(x)}},
\end{align*}
as required.
\end{proof}

We conclude this section by observing that the limiting Gaussian process $\varXi(x)$ can be represented, in the distributional sense, through a standard Brownian motion $(B_t,\myp t\ge0)$ (i.e., with mean zero and covariance function $\Cov(B_t,B_{t'})=\min\myn\{t,t'\}$).
\begin{theorem}\label{th:CLT3}
The following distributional representation holds,
\begin{equation}\label{eq:B}
\varXi(x)\stackrel{{\rm d}}{=}\frac{B_{\varphi_\nu(x)}}{\sqrt{\varphi_\nu(x)}}\qquad (x>0).
\end{equation}
In particular, a rescaled process in inverted time\/ $t=1/x$, defined by
\begin{equation}\label{eq:BB}
\widetilde{\varXi}(t):=\sqrt{\varphi_\nu(1/t)}\:\varXi(1/t)\qquad (t\ge0),
\end{equation}
has independent increments.
\end{theorem}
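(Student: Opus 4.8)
The plan is to establish \eqref{eq:B} at the level of finite-dimensional distributions by matching covariance functions, and then to read off the independent-increments property of $\widetilde{\varXi}$ from the resulting time-change representation.

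For the first part, note that both sides of \eqref{eq:B} are centered Gaussian processes: the right-hand side because, for each fixed $x>0$, the quantity $B_{\varphi_\nu(x)}/\sqrt{\varphi_\nu(x)}$ is a deterministic linear functional of the Gaussian process $(B_t)$, so any finite linear combination of such quantities is again Gaussian with zero mean. By Theorem~\ref{th:CLT2}, the process $(\varXi(x),\,x>0)$ is the centered Gaussian process with covariance $K(x,x')=\sqrt{\varphi_\nu(x')/\varphi_\nu(x)}$ for $0<x\le x'$, and since the law of a centered Gaussian process is determined by its covariance, it suffices to check that the right-hand side of \eqref{eq:B} has the same covariance. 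Because $\varphi_\nu$ is strictly decreasing on $(0,\infty)$, for $0<x\le x'$ we have $\varphi_\nu(x')\le\varphi_\nu(x)$, hence $\min\{\varphi_\nu(x),\varphi_\nu(x')\}=\varphi_\nu(x')$, and therefore
\[
\Cov\!\left(\frac{B_{\varphi_\nu(x)}}{\sqrt{\varphi_\nu(x)}},\,\frac{B_{\varphi_\nu(x')}}{\sqrt{\varphi_\nu(x')}}\right)
=\frac{\min\{\varphi_\nu(x),\varphi_\nu(x')\}}{\sqrt{\varphi_\nu(x)\,\varphi_\nu(x')}}
=\sqrt{\frac{\varphi_\nu(x')}{\varphi_\nu(x)}}=K(x,x'),
\]
which gives the distributional identity \eqref{eq:B}.

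For the second part, I would substitute \eqref{eq:B} into the definition \eqref{eq:BB} to obtain, in the sense of finite-dimensional distributions,
\[
\widetilde{\varXi}(t)=\sqrt{\varphi_\nu(1/t)}\;\varXi(1/t)\stackrel{{\rm d}}{=}B_{\varphi_\nu(1/t)}\qquad(t\ge0),
\]
since $\widetilde{\varXi}$ is obtained from $\varXi$ by a deterministic pointwise transformation. Writing $\tau(t):=\varphi_\nu(1/t)$, the map $\tau$ is nondecreasing on $[0,\infty)$ (a composition of two decreasing maps), with $\tau(0)=\varphi_\nu(\infty)=0$, consistent with $\widetilde{\varXi}(0)=0$. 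Thus $(\widetilde{\varXi}(t))$ has the law of the time-changed Brownian motion $(B_{\tau(t)})$. For any $0\le t_0\le t_1\le\dots\le t_m$ the images satisfy $0\le\tau(t_0)\le\tau(t_1)\le\dots\le\tau(t_m)$, so the increments $B_{\tau(t_k)}-B_{\tau(t_{k-1})}$ ($k=1,\dots,m$) are increments of Brownian motion over disjoint time intervals and hence independent; therefore so are the increments $\widetilde{\varXi}(t_k)-\widetilde{\varXi}(t_{k-1})$. Equivalently, one may bypass \eqref{eq:B} and argue directly from Theorem~\ref{th:CLT2} that $\Cov(\widetilde{\varXi}(s),\widetilde{\varXi}(t))=\varphi_\nu(1/\min\{s,t\})$, so that increments over disjoint time intervals are uncorrelated, and then invoke the fact that a jointly Gaussian family is independent once it is uncorrelated.

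I do not expect a genuine obstacle here: the argument is essentially bookkeeping resting on the covariance already computed in Theorem~\ref{th:CLT2}. The two points needing a little care are (i) tracking the direction of monotonicity of $\varphi_\nu$, so that the Brownian minimum correctly lands on the value at the larger argument, and (ii) recognising that "$\stackrel{{\rm d}}{=}$" in \eqref{eq:B} is meant at the level of finite-dimensional distributions — precisely the level at which independent increments are formulated — so no pathwise regularity is needed; the endpoint $t=0$ (equivalently $x=\infty$) is harmless because $\varphi_\nu(\infty)=0$.
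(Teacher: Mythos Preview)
Your proposal is correct and follows essentially the same approach as the paper: match covariances of the two centered Gaussian processes (using that $\varphi_\nu$ is decreasing so $\min\{\varphi_\nu(x),\varphi_\nu(x')\}=\varphi_\nu(x')$ for $x\le x'$), and then deduce independent increments of $\widetilde{\varXi}$ from those of Brownian motion via the monotone time change $\tau(t)=\varphi_\nu(1/t)$. Your write-up is simply more explicit than the paper's, which compresses both steps into a couple of lines.
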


\begin{proof}
It suffices to observe that the covariance function of the process on the right-hand side of \eqref{eq:B} is given by (for any $0<x\le x'$)
$$
\frac{1}{\sqrt{\varphi_\nu(x)\mypp \varphi_\nu(x')}}\,\Cov\bigl(B_{\varphi_\nu(x)},B_{\varphi_\nu(x')}\bigr)=\frac{1}{\sqrt{\varphi_\nu(x)\mypp \varphi_\nu(x')}}\,\varphi_\nu(x')=K(x,x'),
$$
using that $\varphi_\nu(x')\le \varphi_\nu(x)$. Finally, independence of increments for the process \eqref{eq:BB} is a straightforward consequence of the same property for the Brownian motion $B_t$.
\end{proof}

\section{Poisson approximation in the ``chaotic'' regime}\label{sec:6}

In this section, we consider the case wherein Assumption \ref{as:B} is not satisfied, so that the $y$-scaling coefficient $B$ is bounded (which is only possible for  $\nu\le0$, see formulas \eqref{eq:B1} to \eqref{eq:B4}). We call this case \emph{chaotic} because convergence of the random variable $\widetilde{Y}(x)$ to the limit shape $\varphi_\nu(x)$ does not hold here (cf.\:Theorem \ref{th:conv_Y}), despite convergence of the expected value $\EE\bigl(\widetilde{Y}(x)\bigr)=M\bar{F}(A\myp x)/B\to \varphi_\nu(x)$ (Theorem \ref{th:conv_exp}).
The root cause of this failure is that, although $\widetilde{Y}(x)$ is a normalized sum of independent Bernoulli variables $Z_i(A\myp x)=I_{\{X_i\ge Ax\}}$ (see \eqref{eq:tildeY}), the success probability $\PP(X_i\ge A\myp x)=\bar{F}(A\myp x)$ tends to zero, which is not offset by a fast enough growth of the number of terms $M$ (see Remark~\ref{rm:M}).

\subsection{One-dimensional distributions}\label{sec:6.1}
We start by studying one-dimensional distributions of $Y(x)$, that is, at a given point $x>0$. For orientation, consider a  stylized case where $B=1$, then $M\bar{F}(A\myp x)\to \varphi_\nu(x)$ and, according to the classic Poisson ``law of small numbers'' \cite{Whitaker}, the binomial distribution of the sum $\widetilde{Y}(x)=Z_1(A\myp x)+\dots+Z_M(A\myp x)$ is asymptotically close to a Poisson distribution with parameter $\lambda=\varphi_\nu(x)$.
That is to say, the sums $\widetilde{Y}(x)$ do not settle down to a deterministic constant (like in a law of large numbers) but, due to a persistent ``small'' randomness, admit a non-degenerate (Poisson) approximation without any normalization. This observation is generalized as follows.
\begin{theorem}\label{th:Poisson}
Suppose that Assumptions \ref{as:theta} and \ref{as:AB} are satisfied but
Assumption \ref{as:B} is not, so that $B=O(1)$. Then the distribution of the random variable\/ $Y(A\myp x)$ for $x>0$ is approximated by a Poisson distribution with parameter $M\bar{F}(A\myp x)\sim B\mypp\varphi_\nu(x)$ and with the corresponding error in total variation distance (or in Kolmogorov's uniform distance) bounded by $O(M^{-1})=o(1)$.
\end{theorem}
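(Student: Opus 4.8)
The plan is to recognise $Y(A\myp x)$, for a fixed $x>0$, as a sum of independent Bernoulli indicators and then invoke a classical quantitative Poisson limit theorem.

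First I would recall from \eqref{eq:Zi}, \eqref{eq:YZ} and \eqref{eq:Z1} that
$$
Y(A\myp x)=\sum_{i=1}^M Z_i(A\myp x),
$$
where $Z_1(A\myp x),\dots,Z_M(A\myp x)$ are i.i.d.\ Bernoulli random variables with success probability $p:=\bar{F}(A\myp x)=\PP(X_i\ge A\myp x)$; that is, $Y(A\myp x)\sim\mathrm{Bin}(M,p)$, with mean $M\myp p$. By Theorem \ref{th:conv_exp} (which requires only Assumptions \ref{as:theta} and \ref{as:AB}, both in force here) together with \eqref{eq:expY}, we have $M\myp p/B=\EE\bigl(\widetilde{Y}(x)\bigr)\to\varphi_\nu(x)$, so $M\myp p\sim B\myp\varphi_\nu(x)$. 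Since we are in the chaotic regime with $B=O(1)$, this forces $M\myp p=O(1)$ and hence $p=O(M^{-1})$.

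Next I would apply a quantitative Poisson approximation for sums of independent indicators: by a classical bound (Le Cam's inequality, equivalently via the Stein--Chen method), for independent $\mathrm{Bernoulli}(p_i)$ variables with $\lambda=\sum_i p_i$ one has
$$
d_{\mathrm{TV}}\Bigl(\,{\textstyle\sum_i}\myp\mathrm{Bernoulli}(p_i),\ \mathrm{Poisson}(\lambda)\Bigr)\le \sum_{i} p_i^{\myp2}.
$$
In the present homogeneous case $p_i\equiv p$, $\lambda=M\myp p$, this gives
$$
d_{\mathrm{TV}}\bigl(Y(A\myp x),\ \mathrm{Poisson}(M\myp p)\bigr)\le M\myp p^{\myp2}=(M\myp p)\myp p=O(1)\cdot O(M^{-1})=O(M^{-1}).
$$
Since Kolmogorov's uniform distance between distribution functions is dominated by the total variation distance, the same $O(M^{-1})$ bound holds in Kolmogorov's metric; and $O(M^{-1})=o(1)$ because $M\to\infty$ by Assumption \ref{as:theta}. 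Recalling that $M\myp p=M\bar{F}(A\myp x)\sim B\myp\varphi_\nu(x)$ then yields the claim.

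There is no serious obstacle here; the only point requiring care is that the Poisson parameter must be taken to be $M\bar{F}(A\myp x)$ itself, and not its asymptotic surrogate $B\myp\varphi_\nu(x)$. Indeed, replacing $\mathrm{Poisson}\bigl(M\bar{F}(A\myp x)\bigr)$ by $\mathrm{Poisson}\bigl(B\myp\varphi_\nu(x)\bigr)$ would incur an additional error of order $\bigl|M\bar{F}(A\myp x)-B\myp\varphi_\nu(x)\bigr|$, which by Theorem \ref{th:conv_exp} is only $o(1)$ and need not be $O(M^{-1})$ --- this is precisely why the statement is phrased with parameter $M\bar{F}(A\myp x)$.
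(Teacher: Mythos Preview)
Your proof is correct and follows essentially the same approach as the paper: recognise $Y(A\myp x)$ as $\mathrm{Bin}(M,p)$ with $p=\bar{F}(A\myp x)$, invoke the classical Poisson approximation bound $d_{\mathrm{TV}}\le M p^2$ (Le Cam/Stein--Chen), and use $Mp\sim B\myp\varphi_\nu(x)=O(1)$ to conclude the error is $O(M^{-1})$. Your additional remark about why the Poisson parameter must be taken as $M\bar{F}(A\myp x)$ rather than $B\myp\varphi_\nu(x)$ is a nice clarification not spelled out in the paper.
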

\begin{proof}
 This is an immediate consequence of a well-known approximation for the binomial distribution of the total number of successes in $n$  independent Bernoulli trials, with success probability $p$, by a Poisson distribution with \strut{}parameter $\lambda=np$, with the error bounded by $\sigma^2=n\myp p^2$ (see, e.g., \cite{Barbour,Novak}). In our case,
  \strut{}$\lambda=M\bar{F}(A\myp x)$ and $\sigma^2=M\bar{F}(A\myp x)^2\sim \bigl(B\mypp\varphi_\nu(x)\bigr)^2\mynn/M=O(1/M)=o(1)$.
\end{proof}

The Poisson approximation stated in Theorem \ref{th:Poisson} is illustrated in Fig.\,\ref{fig:GIGP-Poisson} using $100$ simulated samples (of size $M=35$ each) from the GIGP distribution \eqref{eq:GIGP} with parameters $\nu=-0.5$, $\alpha=2$, and $\theta=0.99$. The $y$-scaling coefficient computed from \eqref{eq:B3} is given by $B\doteq 1.974664$, confirming that this is a chaotic regime (i.e., where Assumption \ref{as:B} is not satisfied). The $x$-scaling coefficient \eqref{eq:A} specializes to $A\doteq 99.49916$. The left panel in Fig.\,\ref{fig:GIGP-Poisson} shows the sample Young diagrams superimposed on one another using transparent shading (in blue), so that darker places correspond to a more frequent occurrence. As anticipated, there is no convergence to a deterministic limit shape, but an emerging ``typical'' boundary of blue diagrams clearly indicates an expected curve establishing in the limit.

In the right panel of Fig.\,\ref{fig:GIGP-Poisson}, we choose a trial value $x_0=0.2$ and plot a histogram for the observed frequencies of the random values $Y(A\myp x_0)$, where $A\myp x_0\doteq 19.89983$. A visual inspection supports a reasonable match with Poisson distribution with the mean $M\bar{F}(A\myp x_0)\doteq4.342498$. This is confirmed by Pearson's $\chi^2$-test, with the bins labelled by the values of $j$ from $0$ to $9$ and the respective observed frequencies $o_j$. Since the expected frequencies $e_0\doteq 1.3004$ and $e_9\doteq 3.340067$ are less than $5$, we follow a common recommendation and combine the bins $j=0$ and $j=9$ with $j=1$ and $j=8$, respectively. The grouped $\chi^2$-statistic is calculated to yield $1.972246$ on $10-2-1=7$ degrees of freedom, with the $p$-value of $96.14\%$, so the goodness-of-fit test is comfortably passed.

\begin{figure}[!h!]
\renewcommand{\thesubfigure}{}\centering
\subfigure[\qquad (a)]
{\includegraphics[width=0.47\textwidth]{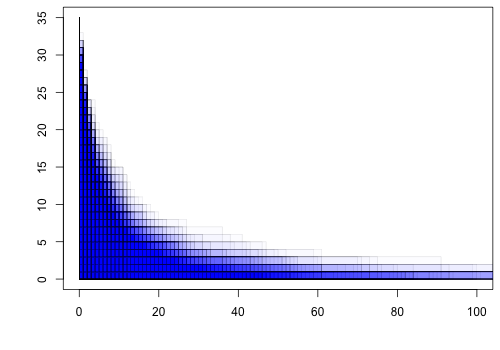}}
\put(-95,3){\mbox{\footnotesize$x$}}
\put(-203,96){\mbox{\footnotesize$y$}}
\hspace{1.9pc}\subfigure[\qquad (b)]
{\includegraphics[width=0.47\textwidth]{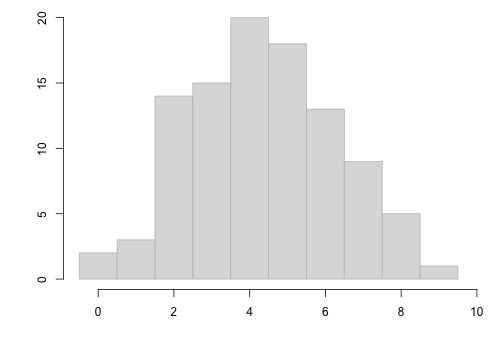}
\put(-95,3){\mbox{\footnotesize$j$}}
\put(-204,96)
{\mbox{\footnotesize$o_j$}}}
\caption{Illustration of Poisson statistics in the chaotic regime. The left panel shows superimposed Young diagrams of $100$ random samples of size $M=35$ each, generated from the GIGP model \eqref{eq:GIGP} with parameters $\nu=-0.5$, $\alpha=2$, and $\theta=0.99$.
The right panel shows the histogram of observed frequencies $o_j$ of the random values $Y(A\myp x_0)$, where $A\myp x_0\doteq 19.89983$. For orientation, the mean of the approximating Poisson distribution is given by $M\bar{F}(A\myp x_0)\doteq4.342498$.
}\label{fig:GIGP-Poisson}
\end{figure}

\subsection{Finite-dimensional distributions}\label{sec:6.2}
The result of Theorem \ref{th:Poisson} can be generalized to finite-dimensional  distributions. It is convenient to set $Y(\infty)=0$, using that, with probability $1$, $Y(x)\to0$ as $x\to\infty$.

\begin{theorem}\label{th:Poisson1}
Under the hypotheses of Theorem \ref{th:Poisson}, for any finite array\/ $0<x_1<\dots<x_k<x_{k+1}=\infty$, the increments\/ $\{Y(A\myp x_{i})-Y(A\myp x_{i+1}),\,i=1,\dots,k\}$ are asymptotically independent, with marginal distributions approximated by Poisson distributions with parameters $\lambda_{i}=M\bigl(\bar{F}(A\myp x_{i})-\bar{F}(A\myp x_{i+1})\bigr)$,
respectively, with the error (in total variation) bounded by
$O(M^{-1})=o(1)$.
\end{theorem}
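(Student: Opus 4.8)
The plan is to realize the vector of increments as a coordinate-wise sum over the $M$ sources of independent and identically distributed multinomial-type summands, and then to compare it in total variation with a product of independent Poisson laws. Writing $B_i:=\{j\in\NN_0\colon A\myp x_i\le j< A\myp x_{i+1}\}$ for $i=1,\dots,k$ (so that $B_k=\{j\ge A\myp x_k\}$), it follows from \eqref{eq:Y} and \eqref{eq:YZ} that
\begin{equation*}
Y(A\myp x_i)-Y(A\myp x_{i+1})=\sum_{\ell=1}^M\bigl(I_{\{X_\ell\ge Ax_i\}}-I_{\{X_\ell\ge Ax_{i+1}\}}\bigr)=\sum_{\ell=1}^M I_{\{X_\ell\in B_i\}}\qquad(i=1,\dots,k),
\end{equation*}
so that the increment vector equals $\sum_{\ell=1}^M U_\ell$, where $U_\ell:=(I_{\{X_\ell\in B_1\}},\dots,I_{\{X_\ell\in B_k\}})\in\{0,1\}^k$ are i.i.d., each having at most one nonzero coordinate, with $\PP(U_\ell=\mathbf{e}_i)=p_i:=\bar F(A\myp x_i)-\bar F(A\myp x_{i+1})$ and $\PP(U_\ell=\mathbf{0})=1-s$, where $s:=\sum_{i=1}^k p_i=\bar F(A\myp x_1)$.

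As the approximating object I would take the law of $\sum_{\ell=1}^M V_\ell$, where the vectors $V_\ell=(V_{\ell,1},\dots,V_{\ell,k})$ have mutually independent coordinates $V_{\ell,i}\sim\mathrm{Po}(p_i)$ and are independent across $\ell$; this sum has mutually independent coordinates, the $i$-th one distributed as $\mathrm{Po}(Mp_i)=\mathrm{Po}(\lambda_i)$. Coupling the pairs $(U_\ell,V_\ell)$ by independent optimal couplings, a union bound over the events $\{U_\ell\ne V_\ell\}$ gives
\begin{equation*}
d_{\mathrm{TV}}\!\left(\mathcal{L}\Bigl({\textstyle\sum_{\ell=1}^M}\,U_\ell\Bigr),\ \bigotimes_{i=1}^k\mathrm{Po}(\lambda_i)\right)\le\sum_{\ell=1}^M d_{\mathrm{TV}}\bigl(\mathcal{L}(U_\ell),\mathcal{L}(V_\ell)\bigr).
\end{equation*}
A direct single-trial estimate (comparing the masses at $\mathbf{0}$, at each $\mathbf{e}_i$, and the residual mass that $V_\ell$ assigns to states of total size at least $2$) gives $d_{\mathrm{TV}}\bigl(\mathcal{L}(U_\ell),\mathcal{L}(V_\ell)\bigr)\le s^2=\bar F(A\myp x_1)^2$, uniformly in $k$ and in the chosen points; hence the left-hand side is at most $M\myp\bar F(A\myp x_1)^2$.

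To conclude, by Theorem~\ref{th:conv_exp} (which requires only Assumptions~\ref{as:theta} and \ref{as:AB}) we have $M\myp\bar F(A\myp x_1)\sim B\myp\varphi_\nu(x_1)$, which stays bounded because $B=O(1)$ in the chaotic regime; therefore $M\myp\bar F(A\myp x_1)^2=\bigl(M\myp\bar F(A\myp x_1)\bigr)^2\mynn/M=O(M^{-1})=o(1)$, which is the asserted error bound. Convergence of the increment vector in total variation to the product measure $\bigotimes_{i=1}^k\mathrm{Po}(\lambda_i)$ then delivers at once the asymptotic independence of the increments and the Poisson approximation of each marginal by $\mathrm{Po}(\lambda_i)$ with $\lambda_i=M\bigl(\bar F(A\myp x_i)-\bar F(A\myp x_{i+1})\bigr)$ (which are asymptotically $B\bigl(\varphi_\nu(x_i)-\varphi_\nu(x_{i+1})\bigr)$), and the case $k=1$ recovers Theorem~\ref{th:Poisson}. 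The only genuinely delicate point is that within a single source the bin-indicators are mutually exclusive, hence dependent, so $U_\ell$ cannot be compared with a product law coordinate by coordinate; this is precisely why each source must be Poissonized separately — Poissonizing instead the total count $M$ would only yield the weaker rate $O(M^{-1/2})$. Alternatively, the same order $O(M^{-1})$ follows from a multivariate Stein--Chen bound \cite{Barbour} applied to the array $\{I_{\{X_\ell\in B_i\}}\}$, whose dependency neighborhoods are empty across sources and, within a source, consist of the $k$ mutually exclusive coordinates.
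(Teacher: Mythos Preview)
Your proof is correct and follows essentially the same approach as the paper: both recognize that the increment vector has a multinomial distribution with parameter $M$ and cell probabilities $p_i=\bar F(A\myp x_i)-\bar F(A\myp x_{i+1})$, and then invoke a Poisson approximation with total variation error $O(Mp^2)$ where $p=\sum_i p_i=\bar F(A\myp x_1)$, yielding $O(M^{-1})$ exactly as you compute. The only difference is cosmetic\,---\,the paper cites a ready-made bound from \cite{Novak}, whereas you supply the explicit per-source coupling (and your single-trial estimate $d_{\mathrm{TV}}(\mathcal L(U_\ell),\mathcal L(V_\ell))\le s^2$ is easily verified by taking positive parts, since $\PP(U_\ell=\mathbf e_i)-\PP(V_\ell=\mathbf e_i)=p_i(1-\rme^{-s})$ are the only nonnegative differences and sum to $s(1-\rme^{-s})\le s^2$).
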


\begin{remark}
The leftmost increment $Y(0)-Y(A\myp x_1)=M-Y(A\myp x_1)$ is excluded from the statement, because it is (linearly) expressible through the  other increments,
$$
Y(0)-Y(A\myp x_1)=M-\sum_{i=1}^k\bigl( Y(A\myp x_{i})-Y(A\myp x_{i+1})\bigr).
$$
\end{remark}

\begin{proof}[Proof of Theorem \ref{th:Poisson1}]
Note that the joint distribution of the increments is \emph{multinomial}, with parameter $M$ (the number of trials) and  probabilities $p_{i}=\bar{F}(A\myp x_{i})-\bar{F}(A\myp x_{i+1})$, corresponding to outcomes belonging to intervals $[A\myp x_{i},A\myp x_{i+1})$, respectively ($i=1,\dots,k$). The claim then follows by a general Poisson approximation theorem (see \cite{Novak}).  In particular, denoting $p:=p_1+\dots+p_k$, the error in total variation is known to be bounded by $O(M p^2)$. In our case, $p=\sum_{i=1}^k\bigl(\bar{F}(A\myp x_{i})-\bar{F}(A\myp x_{i+1})\bigr)=\bar{F}(A\myp x_1)$, and the estimate $O(M^{-1})$ readily follows like in Theorem \ref{th:Poisson}.
\end{proof}

\begin{remark}
To get a sense of why asymptotic independence of increments in Theorem \ref{th:Poisson1} is true, it is helpful to check out that the covariance between the neighboring increments vanishes in the limit. Indeed, for any $0<x<x'<x''\le\infty$ we have, with the aid of \eqref{eq:Cov},
\begin{align*}
\Cov\bigl(Y&(A\myp x)-Y(A\myp x'),\mypp Y(A\myp x')-Y(A\myp x'')\bigr)\\
&=
M\bar{F}(A\myp x')\mypp F(A\myp x)-M\bar{F}(A\myp x'')\mypp F(A\myp x)\\
&\quad -M\bar{F}(A\myp x')\mypp F(A\myp x')+M\bar{F}(A\myp x'')\mypp F(A\myp x')\\
&\to \varphi_\nu(x')-\varphi_\nu(x'')-\varphi_\nu(x')+\varphi_\nu(x'')=0,
\end{align*}
again using the convergence in Theorem \ref{th:conv_exp}.
\end{remark}

The finite-dimensional approximations of Theorem \ref{th:Poisson1} can be unified in terms of a suitable Poisson process considered in inverted time. Specifically, consider an inhomogeneous Poisson process $(\xi_t,\,t\ge 0)$, $\xi_0=0$, with integrated rate function $\varLambda(t)=M\bar{F}(A/t)\sim B\mypp\varphi_\nu(1/t)$, that is,
$\EE\myp(\xi_t)=\varLambda(t).$

\begin{theorem}\label{th:Poisson2} Under the hypotheses of Theorems \ref{th:Poisson} and \ref{th:Poisson1}, the distribution of the process $(Y(A\myp x),\,0<x\le \infty)$
is approximated by the distribution of the random process $(\xi_{1/x},\,0<x\le\infty)$.
\end{theorem}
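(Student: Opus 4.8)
The plan is to pass to inverted time $t=1/x$ and reduce the statement entirely to the increment result of Theorem~\ref{th:Poisson1}. Set $\widetilde{Y}(t):=Y(A/t)$ for $t>0$ and $\widetilde{Y}(0):=0$, which is legitimate since $Y(A/t)\to Y(\infty)=0$ as $t\to0+$. Then $\widetilde{Y}$ is a non-decreasing, integer-valued, pure-jump process (indeed the counting function of the point process $\sum_{i\colon X_i>0}\delta_{A/X_i}$ on $(0,\infty)$, which has at most $M$ points), and the target process $(\xi_t,\,t\ge0)$ likewise starts from $\xi_0=0$. Because the finite-dimensional distributions of a process anchored at $0$ are determined by the joint law of its increments over an arbitrary partition, it suffices to compare these increments.

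Fix a finite array $0<x_1<\dots<x_k<x_{k+1}=\infty$ and put $t_i:=1/x_i$, so that $t_1>\dots>t_k>t_{k+1}=0$. The increment of $\widetilde{Y}$ over $(t_{i+1},t_i]$ equals $\widetilde{Y}(t_i)-\widetilde{Y}(t_{i+1})=Y(A\myp x_i)-Y(A\myp x_{i+1})$, while the increment of $(\xi_t)$ over the same interval is Poisson with parameter $\varLambda(t_i)-\varLambda(t_{i+1})=M\bar{F}(A\myp x_i)-M\bar{F}(A\myp x_{i+1})=\lambda_i$, which is exactly the parameter appearing in Theorem~\ref{th:Poisson1} (and is non-negative since $\bar F$ is non-increasing). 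By that theorem, the law of the vector of increments $\bigl(Y(A\myp x_i)-Y(A\myp x_{i+1}),\,i=1,\dots,k\bigr)$ lies within total-variation distance $O(M^{-1})$ of the product law $\bigotimes_{i=1}^{k}\mathrm{Poisson}(\lambda_i)$, and the latter is precisely the law of the increments $\bigl(\xi_{1/x_i}-\xi_{1/x_{i+1}},\,i=1,\dots,k\bigr)$ of the Poisson process. Since the passage from an increment vector to the vector of partial sums $\bigl(Y(A\myp x_1),\dots,Y(A\myp x_k)\bigr)$, respectively $\bigl(\xi_{1/x_1},\dots,\xi_{1/x_k}\bigr)$, is a bijective lattice transformation of $\ZZ^k$ (hence preserves total-variation distance), we obtain
\begin{equation*}
d_{\mathrm{TV}}\Bigl(\bigl(Y(A\myp x_1),\dots,Y(A\myp x_k)\bigr),\ \bigl(\xi_{1/x_1},\dots,\xi_{1/x_k}\bigr)\Bigr)=O(M^{-1})
\end{equation*}
for every such array. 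This is the asserted approximation at the level of finite-dimensional distributions, with a uniform rate.

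For the stronger functional reading, one interprets $\widetilde{Y}$ as the point process $N:=\sum_{i=1}^{M}\delta_{A/X_i}$ on $(0,\infty]$, whose intensity measure agrees on $(0,\infty)$ with that of the limiting Poisson process, namely the (finite, total mass $M$) measure generated by $t\mapsto\varLambda(t)=M\bar{F}(A/t)\sim B\myp\varphi_\nu(1/t)$; the finite-dimensional total-variation bound above, evaluated on the generating ring of half-open intervals together with the standard rarity input $M\bar F(A\myp x)^2=O(M^{-1})\to0$ already used in Theorem~\ref{th:Poisson}, upgrades this to weak convergence of $N$ to the Poisson process on $(0,\infty]$, i.e.\ of $(Y(A\myp x),\,0<x\le\infty)$ to $(\xi_{1/x},\,0<x\le\infty)$. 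The single delicate point is precisely this functional upgrade (rather than the fidi statement, which is an immediate corollary of Theorem~\ref{th:Poisson1}): one must exclude escape or coalescence of points as $x\downarrow0$, where $Y(A\myp x)\uparrow M\to\infty$, and it is for this reason that the array in Theorem~\ref{th:Poisson1} is anchored at $x_1>0$ and the process is considered only on $(0,\infty]$; there the intensity stays finite and convergent, so the upgrade is routine but is the one place where an external point-process limit theorem, rather than Theorem~\ref{th:Poisson1} alone, is invoked.
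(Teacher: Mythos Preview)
Your proof is correct and follows exactly the route the paper intends: the paper does not give a separate proof of this theorem but presents it as a direct reformulation of Theorem~\ref{th:Poisson1} in terms of an inhomogeneous Poisson process in inverted time, and your argument makes precisely this reduction explicit (increments over $(t_{i+1},t_i]$ via the bijection between values and increments, then Theorem~\ref{th:Poisson1}). Your discussion of the functional (point-process) upgrade goes beyond what the paper actually states or proves, but is a reasonable elaboration.
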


Thus, unlike the ``regular'' case of Sections \ref{sec:4} and \ref{sec:5}, where the random Young diagrams enjoy convergence to the limit shape $\varphi_\nu(x)$, in the chaotic case that we consider in the present section, the role of the function
$\varphi_\nu(x)$ is that it determines the (integrated) rate of the Poisson approximation.

\section{Real Data Examples}\label{sec:7}
In this section, we look at how well the theoretical limit shape $\varphi_\nu(x)$ conforms to some real data sets studied earlier by Sichel \cite{Sichel1985}.

\subsection{Lotka's data set: author productivity}\label{sec:7.1}
We start with a classic data set considered by Lotka in his seminal paper \cite{Lotka}, comprising the counts of the number of papers (items) published by authors (sources) in \emph{Chemical Abstracts} during 1907--1916. This data set is usually considered as a baseline example of the power law statistics of counts \cite{CSN,Lotka}, but Sichel \cite[pp.\,316--317 and Table 2]{Sichel1985} argued that a GIGP model with predefined parameters $\nu=-0.5$, $\alpha=0$ and an estimated $\theta = 0.96876$ is a better fit to the data (with the $p$-value of $91.8\%$ in Pearson's $\chi^2$-test). The distinctive difference between the two models is of course the long-tail behavior, either power or power-geometric, respectively.

To examine the goodness-of-fit  graphically, similarly to Section \ref{sec:4.11} we first plot the empirical Young diagram $Y(x)$, compared with the fitted GIGP complementary distribution function $\bar{F}(x)$ and contrasted with the theoretical limit shape scaled back to the original coordinates, that is, $x\mapsto B\,\varphi(x/A)$. Here, $M=6\mypp 891$, and the scaling coefficients calculated from \eqref{eq:A} and \eqref{eq:B4} are given by $A\doteq 31.5076$ and $B\doteq 343.5839$. Although the value of $A$ is  not particularly large (because $\theta$ is not extremely close to $1$), a big value of $B$ confirms a reasonable predisposition of the data for a good limit shape approximation.
\begin{figure}[!h]
\renewcommand{\thesubfigure}{}
\centering
\hspace{-1pc}\subfigure[\raisebox{1.8pc}{(a) Lotka's data \cite{Lotka,Sichel1985}: GIGP parameters $\nu=-0.5$, $\alpha=0$ (predefined), $\theta=0.96876$ (estimated)}]%
{\includegraphics[width=0.47\textwidth]{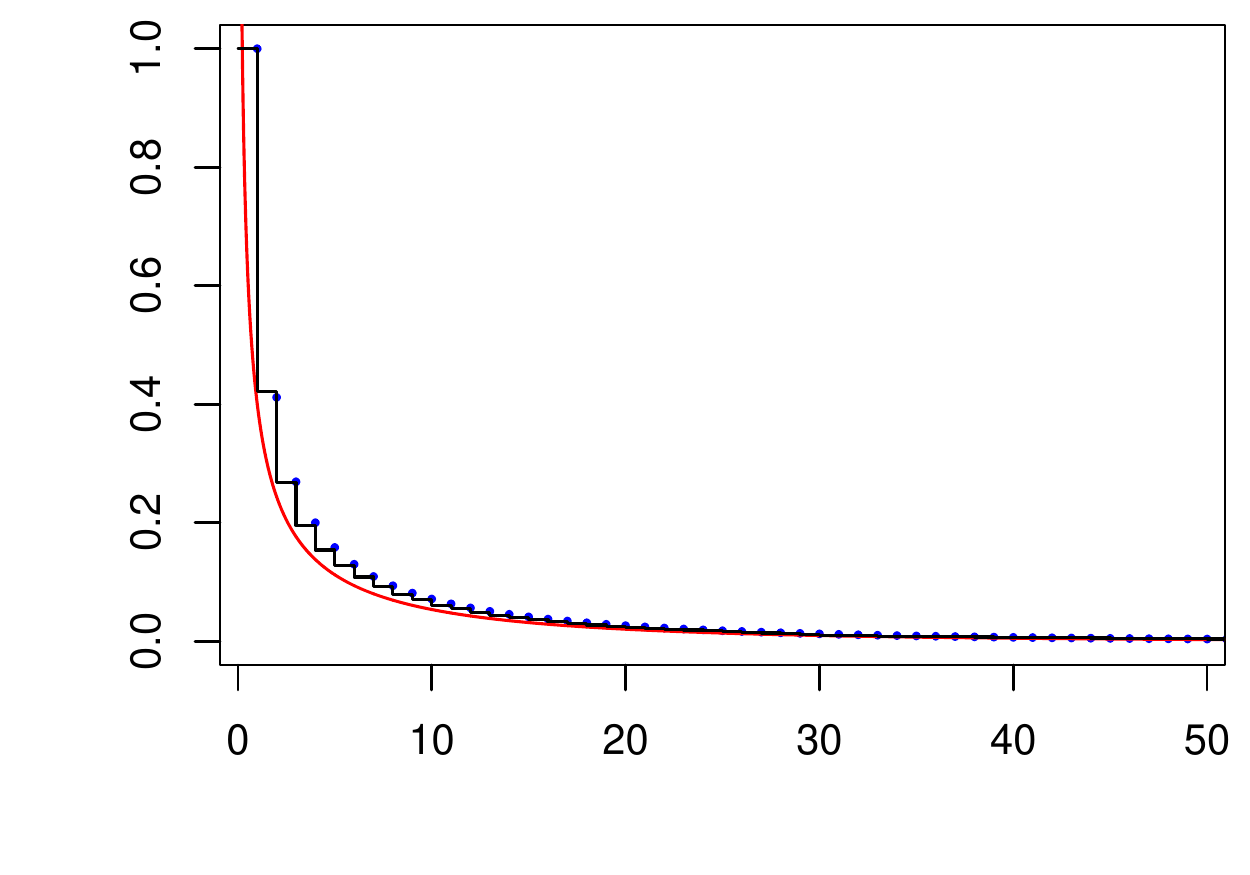}
\put(-90,6){\mbox{\footnotesize$x$}}
\put(-198,85){\mbox{\footnotesize$y$}}
\hspace{1pc}\includegraphics[width=0.47\textwidth]{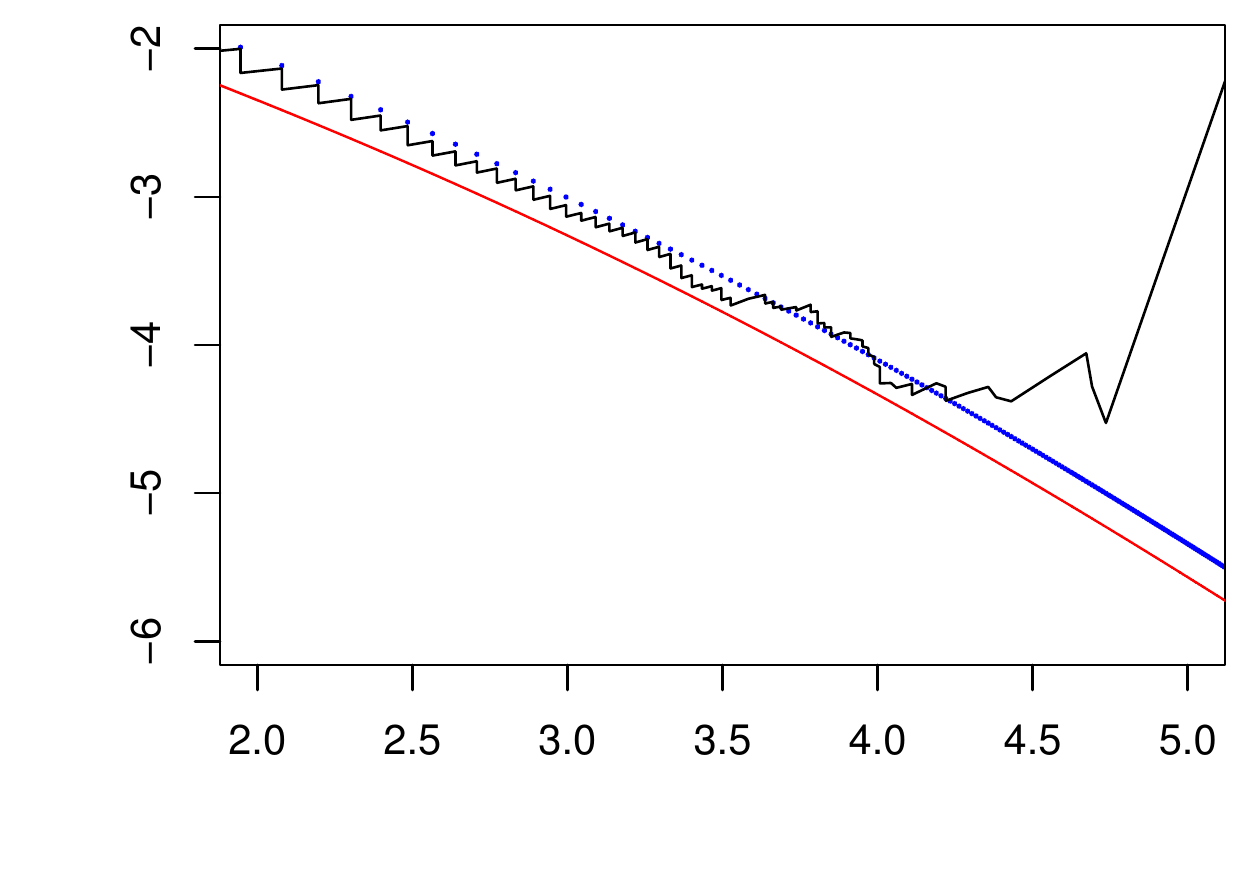}
\put(-90,6){\mbox{\footnotesize$u$}}
\put(-198,85){\mbox{\footnotesize$v$}}
}\\[-1.5pc]
\hspace{-1pc}\subfigure[\raisebox{1.8pc}{(b) Chen's data \cite{Chen,Sichel1985}: GIGP parameters $\nu=0$, $\alpha=0$ (predefined), $\theta=0.99369$ (estimated).}]%
{\includegraphics[width=0.47\textwidth]{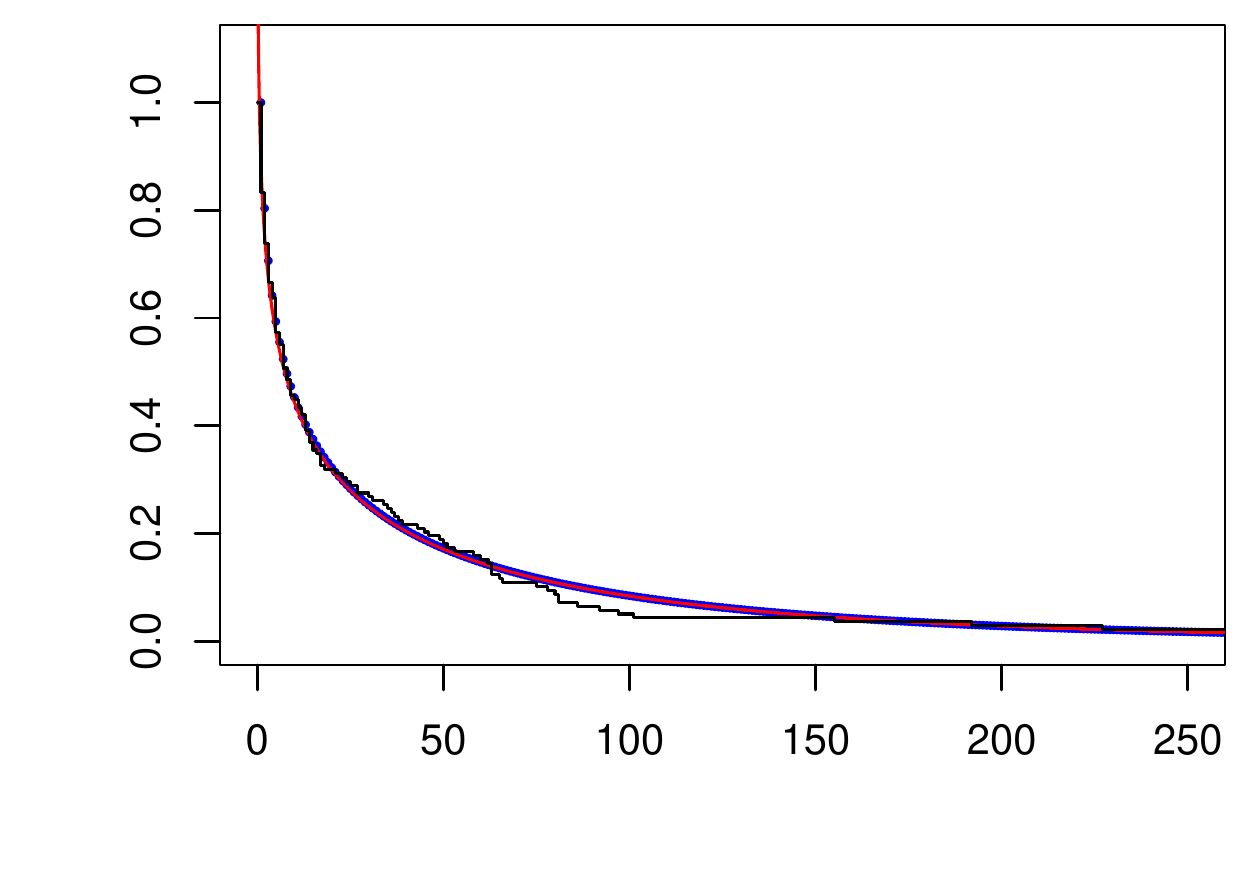}
\put(-90,6){\mbox{\footnotesize$x$}}
\put(-198,85){\mbox{\footnotesize$y$}}
\hspace{1pc}\includegraphics[width=0.47\textwidth]{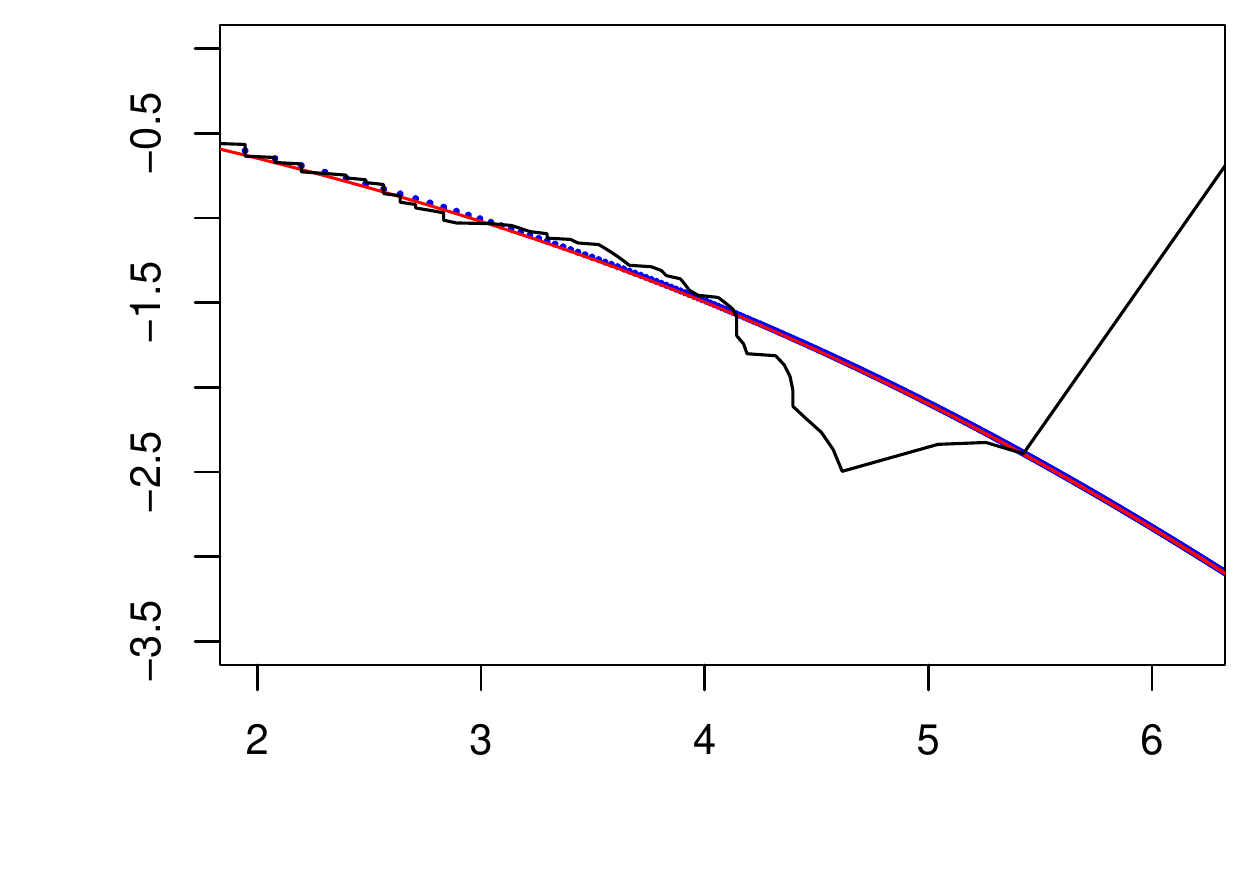}
\put(-90,6){\mbox{\footnotesize$u$}}
\put(-198,85){\mbox{\footnotesize$v$}}
}\vspace{-2pc}
\caption{GIGP model fit to real data sets. Black plots represent the data, blue dotted plots show the fitted GIGP complementary distribution functions, and smooth red lines depict the graphs of the scaled-back limit shape. The right panel shows the tail versions of these plots in transformed coordinates \eqref{eq:uv}.}
\label{Fig4:Lotkadata}
\end{figure}

The left panel in Fig.\,\ref{Fig4:Lotkadata}\myp(a) demonstrates an excellent fit to the bulk of the data for both the GIGP model as well as the limit shape (scaled back to the original coordinates, $x\mapsto B\,\varphi(x/A)$). The visual inspection of the tails in the right panel of Fig.\,\ref{Fig4:Lotkadata}\myp(a) (in transformed coordinates \eqref{eq:uv}) confirms a good fit but only for moderately large observed values $j$, whereas the region $u\ge 4.5$, corresponding to values $j\ge \rme^{4.5}\approx 90$, reveals  increasing deviations from the GIGP prediction. This suggests that very large values in the tail of Lotka's data require a different fitting model, such as a stretched-exponential approximation \cite{Sornette}. Incidentally, upon a closer look at the upper extremes in the Lotka data set, there is just a handful larger than $90$, namely, $95$, $107$, $109$, $114$, and $346$. A surprising maximum $346$ looks like a genuine outlier, three times bigger than the runner-up! Interestingly, this record is attributed to Professor Emil Abderhalden, a prolific and controversial Swiss biochemist and physiologist who worked in the first half of the 20th century \cite{Wiki}. Being rather extraordinary, perhaps this individual record needs to be removed from statistical analysis.

\subsection{Chen's data set: journal use}\label{sec:7.2}
For our second real data example, we revisit the data set from Chen \cite{Chen}, considered by Sichel \cite[pp.\,318--319 and Table 4]{Sichel1985} for the sake of testing a GIGP model. The data comprised counts of use (items) of physics journals in the M.I.T.\ Science Library in 1971, recorded per each volume (sources) taken from the shelves for reading or photocopying. The total number of sources involved (i.e., the number of volumes ever requested) was $M=138$. Sichel fitted a GIGP model with predefined values $\nu=0$, $\alpha=0$ and an estimated  $\theta=0.99369$. He tested goodness-of-fit via Pearson's $\chi^2$-test, observing a reasonably high $p$-value of $31.2\%$, thus not signalling any significant mismatch.

To cross-examine the fit using our methods, similarly as in Section \ref{sec:7.1} we plot the empirical Young diagram's boundary $Y(x)$ along with the fitted GIGP function $\bar{F}(x)$ and scaled-back limit shape $B\,\varphi(x/A)$ (see Fig.\,\ref{Fig4:Lotkadata}\myp(b), left panel), where the scaling coefficients calculated from \eqref{eq:A} and \eqref{eq:B2} are given by $A\doteq 157.9781$ and $B\doteq 27.24247$. It is worth pointing out that, in contrast to Lotka's data set considered in Section \ref{sec:7.1}, here we have quite a large value for the $x$-scaling coefficient $A$ but a relatively small value of the $y$-scaling parameter $B$.

At first glance, the plots in Fig.\,\ref{Fig4:Lotkadata}\myp(b) (left panel) seem to conform to the GIGP model; however, one cannot help noticing a visible deviation from the theoretical prediction around the value $x=100$. This is confirmed by looking at the tail plots in transformed coordinates \eqref{eq:uv} (see Fig.\,\ref{Fig4:Lotkadata}\myp(b), right panel), where $x=100$ corresponds to $u=\log 100 \doteq 4.60517$. To test statistically whether the deviations are significant, we can use the asymptotic normality of $Y(x)$ due to Theorem \ref{th:CLT1}. Specifically, setting $x=100$ and stadardizing according to formula \eqref{eq:CLT1}, we calculate $\varUpsilon(100/A)\doteq -3.413073$, with an extremely small $p$-value of $0.032\%$. Thus, the deviation is highly significant, which implies that the GIGP model is not an accurate fit, at least for moderately large values starting from about $x=70$.

\section{Conclusion}\label{sec:8}

In this paper, we have investigated the asymptotic convergence of the item production profile (visualized via Young diagrams and properly scaled) to the limit shape for the class of GIGP distributions, introduced by Sichel \cite{Sichel1971} and successfully applied to a plurality of count data sets including frequent use cases in informetrics \cite{Sichel1985}. The limit is taken when the number of production sources is large, $M\gg1$, but also subject to a natural assumption of asymptotically large number of items per source, leading to the parametric regime $\theta\approx1$, where $\theta\in(0,1)$ is one of the GIGP scale parameters.

The family of the resulting limit shapes has been identified as the incomplete gamma \strut{}function $\varphi_\nu(x)=\int_x^\infty\! s^{\nu-1}\mypp\rme^{-s}\,\rmd{s}$, where $\nu\ge -1$ is the GIGP shape parameter (represented as the order of the \strut{}Bessel \strut{}function $K_\nu(z)$ involved in the definition of the GIGP probability distribution). The suitable $x$-scaling coefficient is universal, $A=-1/\log\theta\to\infty$, but the $y$-scale $B$ (proportional to the number of sources $M$) depends on the sign of $\nu$. In particular, it follows that $B\to\infty$ if $\nu>0$ (which guarantees convergence to the limit shape), but this may fail for $\nu\le0$, for instance, due to $M$ not being large enough. In the latter case (termed ``chaotic'') we show that the production profile is approximated by a suitable Poisson process with rate defined in terms of the limit shape $\varphi_\nu(x)$. In the regular case (i.e, where the convergence to limit shape holds true),
we also show the asymptotic normality of fluctuations.

Our theoretical results are illustrated using computer simulations (with $\nu=0.5$ and $\nu=-0.5$), showing an excellent match of the limit shape to empirical data in regular cases but also confirming the Poisson statistics in a chaotic regime. We also propose a simple transformation of the data leading to linearized tails of the distribution, which may aid the visual diagnostics of tenability of the GIGP model.

When applied to  real-life data sets, our methods provide a novel approach to extracting useful information about the GIGP model fit. One such examples is the classic Lotka data on author productivity \cite{Lotka}, where the fitted GIGP model enjoys an excellent match, which is conveniently grasped by our graphical plots. However, the plots of properly enhanced upper tails  reveal a certain departure from the GIGP model, which is not captured by the usual aggregated goodness-of-fit tools such as Pearson $\chi^2$-test or Kolmogorov--Smirnov test based on uniform distance between probability distributions \cite{Sichel1985}. The identified extremes can be tracked back and interpreted in the original data, in particular flagging up a very special ``outlier'' as mentioned in Section \ref{sec:7.1}.

Furthermore, in our second example using Chen's data on journal usage \cite{Chen}, our approach has revealed a certain location in the range of counts with a noticeable deviation of the data frequency plot from the theoretical GIGP prediction. By virtue of our result on asymptotic normality of fluctuations, we have shown that these deviations are highly significant. This is to be compared with a reasonably confident ``pass'' of goodness-of-fit based on traditional tools \cite{Sichel1985}, again demonstrating superiority of our methods. We expect that the approach developed in this paper is likely to be useful also in a wider context of various count data.

\appendix

\section{Asymptotic formulas for the Bessel function}\label{sec:A}
The following is a list of useful properties of the Bessel function $K_\nu(z)$, including some asymptotic formulas under various regimes for the argument $z$ and the order $\nu$. For ease of use, we collect these facts here, with reference to the NIST handbook \cite{NIST}.

\begin{lemma}[\cite{NIST}, 10.27.3]
For any $\nu$ and $z$,
\begin{equation}\label{eq:K1}
K_{-\nu}(z)=K_\nu(z).
\end{equation}
\end{lemma}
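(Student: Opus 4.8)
The identity \eqref{eq:K1} is the classical symmetry of $K_\nu$ in its order, and the plan is simply to read it off from a concrete representation of the modified Bessel function of the second kind. The most economical route uses the integral representation $K_\nu(z)=\int_0^\infty \rme^{-z\cosh t}\cosh(\nu t)\,\rmd{t}$, valid for $\Re z>0$ and all $\nu\in\CC$ (see \cite[10.32.9]{NIST}); here I would invoke the evenness of $\cosh$, namely $\cosh(-\nu t)=\cosh(\nu t)$, to conclude that the integrand --- and hence the integral --- is unchanged under $\nu\mapsto-\nu$. This yields $K_{-\nu}(z)=K_\nu(z)$ for $\Re z>0$, and one then extends the identity to the full domain of $K_\nu$ by analytic continuation.

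If one prefers to avoid integral representations, an equivalent derivation starts from the standard definition for non-integer order, $K_\nu(z)=\tfrac{\pi}{2}\bigl(I_{-\nu}(z)-I_\nu(z)\bigr)/\sin(\nu\pi)$, where $I_\nu$ is the modified Bessel function of the first kind. Substituting $-\nu$ for $\nu$ and using that $\sin$ is odd, so $\sin(-\nu\pi)=-\sin(\nu\pi)$, one multiplies numerator and denominator by $-1$ and recovers the original right-hand side verbatim; thus $K_{-\nu}(z)=K_\nu(z)$ for every $\nu\notin\ZZ$.

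The only step deserving a comment is the integer case $\nu=n\in\ZZ$, where the latter formula has a removable $0/0$ indeterminacy. I would dispose of it either by continuity --- $K_n(z)=\lim_{\nu\to n}K_\nu(z)$ exists and depends continuously on the order, so the symmetry survives the passage to the limit --- or, more simply, by using the integral representation throughout, which is manifestly symmetric and well defined for all orders, integers included. In short, there is no genuine obstacle here: once a definition of $K_\nu$ is fixed, \eqref{eq:K1} is immediate, and the only care needed is the bookkeeping around integer orders.
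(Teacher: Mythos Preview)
Your proof is correct and in fact goes beyond what the paper does: the paper offers no proof at all, simply stating the identity with a citation to \cite[10.27.3]{NIST}. Your two derivations (via the integral representation \cite[10.32.9]{NIST} and via the defining relation in terms of $I_{\pm\nu}$) are the standard ones, and your handling of the integer-order case is appropriate.
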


\begin{lemma}
Let $\nu$ be fixed and $z\to0+$.
\begin{itemize}
\item[\rm (a)]
\textup{(\cite{NIST}, 10.30.2)} If $\nu>0$ then
\begin{equation}\label{eq:K2}
K_\nu(z)\sim \tfrac12\mypp \Gamma(\nu) \left(\tfrac12\myp z\right)^{-\nu}\!\mynn.
\end{equation}
\item[\rm (b)]
\textup{(\cite{NIST}, 10.31.1 with the aid of 10.25.2)}
If $\nu=1$ then
\begin{equation}\label{eq:K31}
K_1(z)=z^{-1} + \tfrac12\myp z
\log z +O(z).
\end{equation}

\item[\rm (c)]
\textup{(\cite{NIST}, 10.31.2 with the aid of 10.25.2)} If $\nu=0$ then
\begin{equation}\label{eq:K3}
K_0(z)=-\log\mynn \bigl(\tfrac12\myp z\bigr)-\gamma+O(z^2\log z).
\end{equation}
where $\gamma= 0.5772\dots$
is Euler's constant (\cite{NIST}, 5.2.3).
In particular,
\begin{equation}\label{eq:K30}
K_0(z)\sim -\log z.
\end{equation}

\item[\rm (d)] \textup{(\cite{NIST}, 10.27.4 and 10.25.2 with the aid of 5.5.3)}
For $-1<\nu<0$,
\begin{equation}\label{eq:K4}
K_\nu(z)=\tfrac12\mypp \Gamma(-\nu) \left(\tfrac12\myp z\right)^{\nu}+\frac{\Gamma(\nu+1)}{2\myp\nu}\left(\tfrac12\myp z\right)^{-\nu}+O(z^{\nu+2}).
\end{equation}
\end{itemize}
\end{lemma}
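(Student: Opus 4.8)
The statements are the standard small-argument expansions of $K_\nu(z)$, so the proof is essentially a matter of extracting each one from the cited NIST formulas; below I indicate the derivations. The common starting point for non-integer order is the connection formula $K_\nu(z)=\frac{\pi}{2\sin(\pi\nu)}\bigl(I_{-\nu}(z)-I_\nu(z)\bigr)$ \cite[10.27.4]{NIST} together with the ascending series $I_\mu(z)=(\tfrac12 z)^{\mu}\sum_{k\ge0}(\tfrac14 z^2)^k/\bigl(k!\,\Gamma(\mu+k+1)\bigr)$ \cite[10.25.2]{NIST}; for the integer orders $\nu=0,1$ appearing in parts (b) and (c) one uses instead the dedicated logarithmic series \cite[10.31.1, 10.31.2]{NIST}.

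For parts (a) and (d) I would first substitute the two leading terms of each series, obtaining
\begin{equation*}
K_\nu(z)=\frac{\pi}{2\sin(\pi\nu)}\left(\frac{(\tfrac12 z)^{-\nu}}{\Gamma(1-\nu)}-\frac{(\tfrac12 z)^{\nu}}{\Gamma(1+\nu)}\right)+R(z),
\end{equation*}
where $R(z)$ collects all the terms with $k\ge1$. Applying the reflection formula $\Gamma(s)\Gamma(1-s)=\pi/\sin(\pi s)$ \cite[5.5.3]{NIST} at $s=\nu$ and at $s=-\nu$ (and using $\Gamma(\nu+1)=\nu\,\Gamma(\nu)$) rewrites the two coefficients as $\tfrac12\Gamma(\nu)=\Gamma(\nu+1)/(2\nu)$ and $\tfrac12\Gamma(-\nu)$, so that
\begin{equation*}
K_\nu(z)=\tfrac12\Gamma(-\nu)\bigl(\tfrac12 z\bigr)^{\nu}+\frac{\Gamma(\nu+1)}{2\nu}\bigl(\tfrac12 z\bigr)^{-\nu}+R(z).
\end{equation*}
For $\nu>0$ the term $(\tfrac12 z)^{-\nu}$ is dominant while $(\tfrac12 z)^{\nu}\to0$, which gives \eqref{eq:K2}; the remaining integer orders $\nu\in\NN$ are covered by the same reference \cite[10.30.2]{NIST}. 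For $-1<\nu<0$ both explicit terms are retained, which is precisely \eqref{eq:K4} once the remainder is controlled.

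To bound $R(z)$ in part (d): its leading contributions are the $k=1$ terms, of orders $z^{2-\nu}$ (from $I_{-\nu}$) and $z^{2+\nu}$ (from $I_\nu$); since $\nu<0$ one has $2+\nu<2-\nu$, so $R(z)=O(z^{\nu+2})$, exactly as stated in \eqref{eq:K4}. For part (c) with $\nu=0$, the expansion \cite[10.31.2]{NIST} expresses $K_0$ through $I_0$ and an entire power series; combined with $I_0(z)=1+O(z^2)$ \cite[10.25.2]{NIST} this gives $K_0(z)=-\log(\tfrac12 z)-\gamma+O(z^2\log z)$, i.e.\ \eqref{eq:K3}, and hence \eqref{eq:K30} after absorbing the constant. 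For part (b) with $\nu=1$, the companion expansion \cite[10.31.1]{NIST}, after writing $\log(\tfrac12 z)=\log z-\log 2$, reduces to $K_1(z)=z^{-1}+\tfrac12 z\log z+O(z)$, which is \eqref{eq:K31}.

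There is no substantive obstacle here, since everything is classical; the only points demanding a little care are (i) that the order of $R(z)$ in part (d) switches between $z^{2+\nu}$ and $z^{2-\nu}$ with the sign of $\nu$, so that $O(z^{\nu+2})$ is the correct bound precisely because $\nu<0$, and (ii) that the connection formula $K_\nu=\frac{\pi}{2\sin(\pi\nu)}(I_{-\nu}-I_\nu)$ degenerates at integer orders, which is why $\nu=0$ and $\nu=1$ must be handled through the separate logarithmic series rather than the uniform argument used for (a) and (d).
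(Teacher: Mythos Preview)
Your derivations are correct and follow exactly the NIST references cited in the statement. The paper itself does not give a proof of this lemma: it treats these asymptotics as quoted facts from the handbook, with the relevant NIST identifiers embedded in the statement. Your proposal therefore supplies the details that the paper leaves to the reader, and does so along the same route (connection formula 10.27.4 plus the ascending series 10.25.2 and reflection 5.5.3 for non-integer order; the dedicated logarithmic series 10.31.1--10.31.2 for $\nu=0,1$).
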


\begin{lemma}[\cite{NIST}, 10.41.2]
If $z\ne0$ is fixed and $\nu\to+\infty$, then
\begin{equation}\label{eq:K5}
K_\nu(z)\sim\sqrt{\frac{\pi}{2\myp\nu}} \left(\frac{\rme\myp z}{2\myp\nu}\right)^{-\nu}\!\mynn.
\end{equation}
\end{lemma}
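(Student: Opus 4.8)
The plan is to pass to an integral representation of $K_\nu$ and reduce everything to Stirling's formula. I would start from \cite[10.32.10]{NIST},
\[
K_\nu(z)=\tfrac12\left(\tfrac12\myp z\right)^{\nu}\int_0^\infty \exp\!\left(-t-\frac{z^2}{4t}\right)\frac{\rmd{t}}{t^{\nu+1}},
\]
and apply the substitution $t\mapsto z^2/(4\tau)$ (equivalently, use the symmetry $K_\nu=K_{-\nu}$ of \eqref{eq:K1}) to rewrite this in the more convenient form
\[
K_\nu(z)=\tfrac12\left(\tfrac12\myp z\right)^{-\nu}\int_0^\infty \tau^{\nu-1}\,\rme^{-\tau}\,\rme^{-z^2/(4\tau)}\,\rmd{\tau}.
\]
Here the integrand is the gamma kernel $\tau^{\nu-1}\rme^{-\tau}$, whose integral is $\Gamma(\nu)$, modulated by the bounded factor $\rme^{-z^2/(4\tau)}\in(0,1]$; the point to exploit is that, for fixed $z$, this factor becomes asymptotically immaterial as $\nu\to\infty$.

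Next I would show $\int_0^\infty \tau^{\nu-1}\rme^{-\tau}\rme^{-z^2/(4\tau)}\,\rmd\tau\sim\Gamma(\nu)$ by splitting the range at $\tau=\delta\nu$ for some fixed $\delta\in(0,1)$. On $[\delta\nu,\infty)$ one has $1\ge\rme^{-z^2/(4\tau)}\ge\rme^{-z^2/(4\delta\nu)}\to1$, so the contribution is squeezed between $\rme^{-z^2/(4\delta\nu)}\int_{\delta\nu}^\infty\tau^{\nu-1}\rme^{-\tau}\,\rmd\tau$ and $\int_{\delta\nu}^\infty\tau^{\nu-1}\rme^{-\tau}\,\rmd\tau$; since the gamma law $\mathrm{Gamma}(\nu)$ concentrates around its mean $\nu>\delta\nu$, the mass below $\delta\nu$ is exponentially small relative to $\Gamma(\nu)$ (a standard large-deviation bound, or a direct Stirling estimate), so both bounds are asymptotic to $\Gamma(\nu)$. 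On $[0,\delta\nu)$ the crude estimate $\rme^{-z^2/(4\tau)}\le1$ gives a contribution at most $\int_0^{\delta\nu}\tau^{\nu-1}\rme^{-\tau}\,\rmd\tau=o(\Gamma(\nu))$. Adding the pieces yields $K_\nu(z)\sim\tfrac12\left(\tfrac12\myp z\right)^{-\nu}\Gamma(\nu)$, and inserting Stirling's formula $\Gamma(\nu)\sim\sqrt{2\pi/\nu}\,(\nu/\rme)^{\nu}$ \cite[5.11.3]{NIST} converts the right-hand side into $\sqrt{\pi/(2\nu)}\,(2\nu/(\rme z))^{\nu}=\sqrt{\pi/(2\nu)}\,(\rme z/(2\nu))^{-\nu}$, which is \eqref{eq:K5}.

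The only genuine subtlety is the interplay in the splitting step: one needs $\rme^{-z^2/(4\tau)}$ to be uniformly close to $1$ precisely where the gamma kernel carries almost all of its mass, while at the same time the kernel's mass below $\delta\nu$ (where the perturbation could be appreciably below $1$) must be asymptotically negligible; both hold for any fixed $\delta\in(0,1)$, so the estimate is routine. An equivalent route avoiding the truncation is to rescale $\tau=\nu s$, turning the integral into $\nu^{\nu}\int_0^\infty s^{-1}\rme^{\nu(\log s-s)}\rme^{-z^2/(4\nu s)}\,\rmd s$, and to apply Laplace's method at the maximum $s=1$ of $\log s-s$, discarding the vanishing perturbation $\rme^{-z^2/(4\nu s)}\to1$ by dominated convergence. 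Passing through the gamma integral is what keeps the estimates transparent: in the original variable $t$ the relevant saddle recedes to the origin like $t_*\sim z^2/(4\nu)$, whereas after rescaling the Laplace maximum stays put at $s=1$.
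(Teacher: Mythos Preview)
Your argument is correct. The integral representation you use is valid for $z>0$ (which is all the paper needs), the squeeze on $[\delta\nu,\infty)$ is clean, and the negligibility of the lower tail $\int_0^{\delta\nu}\tau^{\nu-1}\rme^{-\tau}\,\rmd\tau=o(\Gamma(\nu))$ for any fixed $\delta\in(0,1)$ follows from the Chernoff bound for the $\mathrm{Gamma}(\nu,1)$ law (your ``direct Stirling estimate'' $\int_0^{\delta\nu}\tau^{\nu-1}\,\rmd\tau=(\delta\nu)^\nu/\nu$ would only work for $\delta<1/\rme$, so the large-deviation route is the right one to invoke). The final reduction to Stirling is accurate.

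As for comparison with the paper: there is nothing to compare. This lemma sits in Appendix~\ref{sec:A}, which is a list of asymptotic formulas quoted from the NIST handbook; the paper gives no proof and simply cites \cite[10.41.2]{NIST}. So you have supplied a self-contained derivation where the paper imports the result wholesale. Your approach---recognizing the integral as a bounded perturbation of the gamma integral, then applying Stirling---is the natural elementary one and has the virtue of making transparent why the leading term is $\tfrac12\Gamma(\nu)(z/2)^{-\nu}$ before Stirling is even invoked; the handbook derivation in \cite{NIST} proceeds instead via the uniform Debye expansion, which is more powerful (it gives a full asymptotic series uniform in $z$) but correspondingly heavier.
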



\end{document}